\newtheorem{theorem}{Theorem}[section]
\newtheorem{lemma}[theorem]{Lemma}
\newtheorem{corollary}[theorem]{Corollary}
\newtheorem{proposition}[theorem]{Proposition}
\newtheorem{conjecture}[theorem]{Conjecture}
\theoremstyle{remark}
\newtheorem{example}[theorem]{Example}
\newtheorem{remark}[theorem]{Remark}
\theoremstyle{definition}
\newtheorem{definition}[theorem]{Definition}
\newcommand{\dR}{\ensuremath{\mathbb{R}}} 
\newcommand{\R}{\dR}
\begin{document}

\title{On isoperimetric sets of radially symmetric measures}


\author{Alexander V. Kolesnikov}
\address[1]{Moscow State University of Printing Arts, Higher School of Economics,  and St. Tikhon Orthodox University}

\author{Roman I. Zhdanov}
\address{Moscow State University}

\keywords{radially symmetric measures, isoperimetric sets, generalized mean curvature, exponential power laws, Steiner and spherical symmetrization, log-concave and log-convex measures, optimal transportation, product measures,comparison theorems}


\begin{abstract}
We study the isoperimetric problem for  the radially symmetric
measures. Applying the spherical symmetrization procedure and
variational arguments we reduce this problem to a one-dimensional
ODE of the second order. Solving numerically this ODE we get an
empirical description of isoperimetric regions of the planar radially
symmetric exponential power laws. We also prove some isoperimetric
inequalities for the log-convex measures. It is shown, in particular,
that the symmetric balls of large size are isoperimetric sets for
strictly log-convex and radially symmetric measures. In addition, we establish
 some comparison results for general log-convex measures.
\end{abstract}

\maketitle

\makeatletter\renewcommand{\@makefnmark}{}\makeatother
\footnotetext{
This work was supported by the RFBR projects 07-01-00536 and 08-01-90431-Ukr,
RF President Grant MD-764.2008.1.
}

\section{Introduction}

Let $\mu$ be a Borel measure on $\R^d$ (or  a Riemannian manifold)
and $A$ be a Borel set. We consider its surface measure $\mu^{+}(\partial A)$
$$
\mu^{+}(\partial A) = \underline{\lim}_{\varepsilon \to 0} \frac{\mu(A^\varepsilon)-\mu(A)}{\varepsilon},
$$
where $A^\varepsilon$ is the $\varepsilon$-neighborhood  of $A$.
Recall that a set $A$ is called isoperimetric if it has the minimal surface measure
among of all the sets with the same measure $\mu(A)$.

The isoperimetric function  $\mathcal{I}_{\mu}(t)$ of
$\mu$ is defined by
$$
\mathcal{I}_{\mu}(t) = \inf_{A} \{ \mu^{+}(\partial A): \mu(A)=t
\}.
$$

In what follows we denote by $\mathcal{H}^{k}$ the $k$-dimensional
Hausdorff measure. For the Lebesgue measure we also use the common
notations $\lambda$ and $dx$. If $\mu$ has a continuous density
$\rho$, then the surface measure $\mu^{+}$ has
 the following representation:
$\mu^{+} = \rho \cdot \mathcal{H}^{d-1}$. We  denote by
$\kappa_{d}$ the constant appearing in the Euclidean isoperimetric
inequality $\lambda$: $\lambda^{1-\frac{1}{d}}(A) \le
\kappa_{d} \mathcal{H}^{d-1}(\partial A)$.

In this paper we study the isoperimetric sets of the
radially symmetric measures, i.e. measures with densities of the type
$$
\mu=\rho (r) \ dx = e^{-v(r)} \ dx.
$$

Only a small number of spaces with an exact solution
 to the isoperimetric problem is known so far. The most well known examples are
 \begin{itemize}
\item[1)] Euclidean space with the Lebesgue measure (solutions are the balls)
\item[2)] Spheres $S^{d-1}$ and hyperbolic spaces $H^{d-1}$ (solutions are the metric balls)
\item[3)]  Gaussian measure
$
\gamma = \frac{1}{(\sqrt{2\pi})^d}  e^{-\frac{|x|^2}{2}} dx
$
(solutions are the half-spaces).
\end{itemize}
Some other examples can be found in \cite{Ros}. See also
recent developments in \cite{RCBM}, \cite{MaurMorg}, \cite{CJQW},  \cite{EMMP}, \cite{DDNT}, \cite{CMV}.

Whereas $S^{d-1}$ and $H^{d-1}$ are the model spaces in geometry, the Gaussian measures
are the most important model measures in probability theory.
The solutions to the isoperimetric problem for the Gaussian measures have been obtained by
Sudakov and Tsirel'son  \cite{ST} (see also Borell \cite{Bor-iso1}).
The proof given in \cite{ST} used the solution to the isoperimetric problem on the sphere. Ehrhard \cite{Erh} found later another proof based
on the Steiner symmetrization for Gaussian measures (see \cite{Ros} for generalizations to product measures).

Some exact solutions to the isoperimetric problem are known in the one-dimensional case.
For instance, the half-lines are the isoperimetric sets
  for the probability log-concave one-dimensional distributions  (see \cite{BobkovHoudre}). This result was generalized in
\cite{RCBM}.

Another  interesting result
has been obtained by Borell in  \cite{Bor-iso2}. He has shown that
 the balls about the origin
$$B_R = \{|x| \le R\}$$ are solutions to the isoperimetric problem for
the non-probability measure $$\mu = e^{r^2} \ dx.$$ Several
extensions of this result can be found in \cite{RCBM},
\cite{MaurMorg}. It was conjectured in \cite{RCBM} that  the balls
about the origin are solutions to the isoperimetric problem for
$\mu=\rho(r)\ dx$ provided $\log \rho(r)$ is smooth and convex.
We deal below with  a slightly changed version of this conjecture. Namely, we are interested
in radially symmetric measures with increasing convex $r \to \log \rho(r)$ (e.g. $\mu = e^{r^{\alpha}}$, $\alpha \ge 1$).

To our knowledge,  no any other non-trivial exact
solutions to the isoperimetric problem coming from the probability
theory are known. For instance,  it was proved in \cite{RCBM} that there
exist isoperimetric regions for log-concave radially symmetric
distributions which are neither balls nor halfplanes, but no
precise example was given.

  The paper is organized as follows.
  In Section 3 we prove a symmetrization result for rotationally invariant measures.
  This result is not new. During the preparation of the
manuscript we learned from Frank Morgan about a recent
symmetrization result for warped products of manifolds in \cite{MHH} (Proposition 3, Proposition 5). See also
remarks in Section 3.2 of \cite{Ros} and Section 9.4 of \cite{Gromov}. In this
paper we provide alternative arguments which are close to the classical
proof that the Steiner symmetrization does not increase the
surface measure.

 By variational arguments we show that every stationary set for the measure $\mu = e^{-v(r)} \ dx$ on the plane which has $Ox$ as
the revolution axis and a real analytic boundary is either a ball or has the form
\begin{equation}
A = \{(r,\theta) : -f(r) < \theta < f(r) \}, \ f(r) \in [0,\pi],
\end{equation}
where $f(r)$ is a solution to
\begin{equation}
\label{1dimeq}
 \Bigl[ \frac{r^2 f' }{\sqrt{1 + r^2 (f')^2}}\Bigr]' - v'(r)  \Bigl[ \frac{r^2 f'}{\sqrt{1 + r^2 (f')^2}}\Bigr] = c \cdot r
\end{equation}
for some constant $c$.

We analyze this equation for several precise examples. It turns out that apart from special cases
(Lebesgue measure) only small part of the solutions  to this ODE can describe
 an isoperimetric set.
It looks in general impossible to determine analytically the constant $c$ and the initial conditions for
(\ref{1dimeq})
such that the corresponding solution
describes an isoperimetric region.  Nevertheless, performing numerical computations it is possible
to find {\it empirically} the desired parameters, since most
 of the solutions to
(\ref{1dimeq}) are either non-smooth or non-closed curves.

We are especially interested in is
the exponential power law $$\rho(r) = C_{\alpha} e^{-r^{\alpha}}$$
on the plane.
We justify by numerical computations that for the super-Gaussian laws $\alpha >2$ the isoperimetric regions are
non-compact
and can be obtained by a separation of the plane in two pieces by an axially symmetric convex curve.
Unlike this, the isoperimetric regions for the exponential law $\rho =C_1 e^{-r}$
are compact convex axially symmetric sets (which are not the  balls) and their complements.
For some values $\alpha \in (1,2)$ there exist isoperimetric regions of both types.

In the last section we analyze a non-probabilistic case:
$\mu=e^{V} \ dx$, where $V$ is a convex potential. The interest in
this type of measures is partially motivated by
problems coming from the differential geometry. The measures of this type are natural "flat" analogs of the
negatively curved spaces. In fact, both types of spaces enjoy very
similar isoperimetric inequalities. Note that the famous
Cartan-Hadamard conjecture on a comparison isoperimetric
inequality for the manifolds with negative sectional curvatures is still
an open problem.

We prove some results related to the cited conjecture from
\cite{RCBM}. We show, in particular, that the large balls are the
isoperimetric sets for $\mu = e^{V} \ dx$ under assumption that
$V=r^{\alpha},  \alpha>1$ (more generally, $V$ is convex, radially symmetric and superlinear). Applying mass transportational arguments
we prove that every log-convex radially symmetric measure
$\mu=e^{V} \ dx$ satisfies
$$
\mu^{+}(\partial A) \ge \frac{1}{\sqrt{1+\pi^{2}}}
\mu^{+}(\partial B)
$$
for every Borel set $A$ and a  ball $B$ about the origin
satisfying $\mu(A)=\mu(B)$.

We also prove some comparison theorems for  log-convex measures of general type.
We show, in particular, that $\mu = e^{V} dx$ with a convex non-negative $V$
enjoys the Euclidean isoperimetric inequality.
Finally, we prove some results for the products of the one-dimensional (non-probability!) log-convex measures.
The case of probability product measures has been studied in
\cite{BobkovHoudre}, \cite{Barthe01}, \cite{BartheMaurey}.
We  establish a log-convex (one-dimensional) analog of a
Caffarelli's contraction theorem for the optimal transportation of the uniformly
log-concave measures. More precise, we show that every
one-dimensional log-convex measure $\mu = e^V dx$ satisfying
$$
V'' e^{-2V} \ge 1,
$$
$V$ is even and $V(0)=0$
is a $1$-Lipschitz image of the model measure $\nu =
\frac{dx}{\cos x}.$ In particular, this implies the following
comparison result:
$$
\mathcal{I}_{\mu}(t) \ge \mathcal{I}_{\nu}(t) = e^{t/2} + e^{-t/2}.
$$
 Finally, we
estimate the isoperimetric profile for a large class of the  log-convex
product measures.

We thank Frank Morgan for reading the preliminary version of the manuscript and important remarks.
A.K. thanks Andrea Colesanti for his hospitality during the author's very nice visit
of the University of Florence where
this work was partially done.

\section{Existence, regularity and geometric properties of isoperimetric sets}

It is known that under broad assumptions the isoperimetric regions do exist for measures with a
finite  total volume. Some results on existence for measures with an infinite total volume can
be found in \cite{RCBM}.

We will widely use the fact that the isoperimetric low-dimensional
surfaces are regular. A classical result on regularity of the
isoperimetric sets was obtained by Almgren \cite{Almgren}. We use
the following refinement obtained by F.~Morgan \cite{Morgan}. The
original formulation is given in terms of a Riemannian metric, but
the result still holds for the trivial Riemannian metric and a
potential with the same regularity.

Let $A$ be an open set with smooth boundary $\partial A$ and $\{ \phi_t \}, \phi_0 = \mbox{Id}$ be any smooth family of diffeomorphisms
satisfying $\mu(A_t) = \mu(A)$, where $A_t = \phi_t(A)$. We call $A$ stationary if
$$\frac{d}{dt} \mu^{+}(\partial A_t)|_{t=0} = 0.$$

We call $A$ stable if
$$\frac{d^2}{dt^2} \mu^{+}(\partial A_t)|_{t=0} \ge 0.$$

Clearly, isoperimetric sets must be stationary and stable.

\begin{theorem}
\label{morg}
For  $d \le 7$ let $S$ be an isoperimetric hypersurface
for $\mu = e^{-v} \ dx$. Assume that $v$ is $C^{k-1, \alpha}$, $k \ge 1, 0  < \alpha < 1$ and Lipschitz.
Then $S$ is locally a $C^{k, \alpha}$ manifold.
If $v$ is real analytic, then $S$ is real analytic.

For $d >7$ the statement holds up to a closed set of singularities with Hausdorff dimension
less than or equal to $d-7$.
\end{theorem}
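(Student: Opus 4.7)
The plan is to cast the problem in the language of sets of finite perimeter and then combine three classical ingredients: a local almost-minimality property of isoperimetric sets with respect to the Euclidean perimeter, Almgren--Tamanini regularity for almost-minimizers, and an elliptic bootstrap based on the Euler--Lagrange equation.

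First, I would exploit that $e^{-v}$ is positive, bounded and Lipschitz on any compact set, so the weighted perimeter $P_\mu(A)=\int_{\partial^* A}e^{-v}\,d\mathcal{H}^{d-1}$ is locally comparable to the Euclidean perimeter $P(A)$ and differs from it by a controlled error at small scales. Using a standard volume-fixing modification (push a small amount of mass through a ball disjoint from the competitor region) one removes the constraint $\mu(A)=t$ at the cost of a Lagrange-type penalty. The net outcome is the following local almost-minimality: there exist $\Lambda,r_0>0$ such that for every $B_r(x)$ with $r<r_0$ and every competitor $A'$ with $A\triangle A'\subset B_r(x)$,
$$
\mathcal{H}^{d-1}(\partial^* A\cap B_r(x))\le \mathcal{H}^{d-1}(\partial^*A'\cap B_r(x))+\Lambda\, r^{d-1+\gamma}
$$
for some $\gamma>0$ depending on the Lipschitz constant of $v$.

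Second, I would invoke Tamanini's regularity theorem for such $(\Lambda,r_0)$-minimizers of the Euclidean perimeter: the reduced boundary $\partial^* A$ is a $C^{1,\beta}$ hypersurface for some $\beta>0$, and its complement in $\mathrm{spt}\,\|\partial A\|$ (the singular set) has Hausdorff dimension at most $d-8$, by Federer's dimension-reduction argument combined with the Simons--Bombieri--De~Giorgi--Giusti classification of area-minimizing cones. In particular, for $d\le 7$ the boundary is everywhere a $C^{1,\beta}$ manifold.

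Third, I would bootstrap from $C^{1,\beta}$ to the full claimed regularity by means of the Euler--Lagrange equation. Locally, writing $S$ as a graph $x_d=u(x')$, the first variation yields the weighted mean-curvature equation
$$
\mathrm{div}\!\left(\frac{\nabla u}{\sqrt{1+|\nabla u|^2}}\right) = \langle \nabla v,\nu\rangle + c,
$$
a quasilinear elliptic PDE whose coefficients depend on $v$ and its first derivatives. Starting from $u\in C^{1,\beta}$ and $v\in C^{k-1,\alpha}$, the standard Schauder iteration upgrades the regularity one derivative at a time, producing $u\in C^{k,\alpha}$. If $v$ is real analytic, Morrey's analyticity theorem for solutions of analytic elliptic equations gives analyticity of $u$. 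The main obstacle is the first step: establishing $C^{1,\beta}$ regularity at generic boundary points together with the codimension-$8$ bound on the singular set. This rests on the monotonicity formula, blowup analysis and the Simons inequality, none of which is elementary; once it is in hand, the bootstrap and the analytic refinement are routine.
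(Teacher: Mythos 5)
The paper does not prove this theorem: it is quoted as a known regularity result of Morgan \cite{Morgan} (building on Almgren \cite{Almgren}), with only the remark that the Riemannian-metric formulation transfers to a flat metric with a density of the same smoothness. There is therefore no in-paper argument to compare yours against.

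That said, your sketch is a faithful outline of the standard proof underlying the cited result: (i) convert the volume-constrained minimizer of the weighted perimeter into an unconstrained $(\Lambda,r_0)$-almost-minimizer of the Euclidean perimeter, via the volume-fixing diffeomorphism trick plus the locally Lipschitz control on $e^{-v}$; (ii) apply Tamanini/Almgren regularity together with Federer dimension reduction and the Simons classification of minimizing cones to obtain $C^{1,\beta}$ regularity away from a singular set of Hausdorff dimension at most $d-8$, hence no singularities for $d\le 7$; (iii) bootstrap via Schauder on the prescribed weighted mean-curvature equation, and invoke Morrey's theorem in the analytic case. Two small points worth being explicit about: the almost-minimality error can be taken of order $\Lambda r^d$ (so $\gamma=1$), coming both from the volume-fixing correction and from the Lipschitz oscillation of the density over a ball of radius $r$; and the Schauder step needs $k\ge 2$ to make sense of $\nabla v$ in the Euler--Lagrange equation, while the borderline case $k=1$ (only $v\in C^{0,\alpha}$ and Lipschitz) is covered directly by the $\varepsilon$-regularity/excess-decay estimate for almost-minimizers, not by Schauder. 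With those clarifications your route is the correct and standard one.
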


In addition, we will use more special facts about log-concave radially symmetric distributions proved
in \cite{RCBM}.

\begin{itemize}
\item[1)] The balls about the origin are not isoperimetric (even stable) for strictly log-concave
radially symmetric distributions (Theorem 3.10).
\item[2)] The isoperimetric sets for strictly log-concave distributions have connected boundaries
(Corollary 3.9).
\end{itemize}

\section{Spherical symmetrization}

In this section we deal with the radially symmetric  measures.
We start with the case $d=2$. Denote by  $(r,\theta)$
the standard polar coordinate system.
Assume that $\mu=\rho(r) \ dx$ is supported on $B_{R}$, $R \in (0, \infty]$
and $\rho$ is smooth and positive on $B_{R}$.

{\bf Assumption:}  $A$ is an open set with Lipschitz boundary $\partial A$
(i.e. $\partial A$ is a finite union of graphs of Lipschitz functions).

We remark that according to Theorem \ref{morg} the  isoperimetric hypersurfaces satisfy this assumption.

\begin{definition}
We say that a set $A^*$ is obtained from $A \subset R^2$ by the circular symmetrization with respect to the
$x$-axis, if
for every $r \ge 0$ the set  $\partial B_r \cap A^*$ has the same length
as  $\partial B_r \cap A$ and, in addition,
 $\partial B_r \cap A^*$ has the form
 $\{-f(r) < \theta < f(r) \}$ for some $f \in [0,\pi]$.
 If $\partial B_r \subset A$, we require that $\partial B_r \cap A^* = \partial B_r$.
\end{definition}

\begin{remark}
By the Fubini theorem $A$ and $A^*$ have the same $\mu$-measure.
In addition,  the circular symmetrization can be defined with
respect to any ray starting from the origin.
\end{remark}

\begin{proposition}
The circular symmetrization does not increase the surface measure
$$
\mu^{+}(\partial A^*) \le \mu^{+}(\partial A).
$$

Assume that $A$ is connected,
$
\mu^{+}(\partial A^*) = \mu^{+}(\partial A),
$
and
$\mbox{\rm card}(\partial A \cap \partial B_r) < \infty$ for every $r>0$.
Then
$\mu(A^* \setminus U(A) ) =0$ for some rotation $U(r,\theta) = r e^{i(\theta+\theta_0)}.$
\end{proposition}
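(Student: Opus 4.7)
I would follow the classical Steiner symmetrization argument, adapted to polar coordinates $(r,\theta)$, using that $\rho$ depends only on $r$ so that the density pulls out of the angular slices.

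First I would set up the slicing. For almost every $r>0$ (and, under the finite-intersection hypothesis of part two, for every $r>0$), the circle $\partial B_r$ meets $\partial A$ transversally in an even number $2k(r)$ of points $\theta_1(r)<\theta_2(r)<\cdots<\theta_{2k(r)}(r)$, so that
$$A\cap \partial B_r = \bigcup_{i=1}^{k(r)}\{\theta_{2i-1}(r)<\theta<\theta_{2i}(r)\},\qquad A^*\cap \partial B_r = \{-f(r)<\theta<f(r)\},$$
with $2f(r)=\sum_i(\theta_{2i}(r)-\theta_{2i-1}(r))$ and hence $2f'(r)=\sum_{i=1}^{2k(r)}(-1)^i\theta_i'(r)$ almost everywhere. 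With the polar line element $ds=\sqrt{dr^2+r^2\,d\theta^2}$, the transverse part of $\partial A$ contributes
$$\int_0^\infty \rho(r)\sum_{i=1}^{2k(r)}\sqrt{1+r^2(\theta_i'(r))^2}\,dr$$
to $\mu^{+}$, while the corresponding expression for $A^*$ has the summation replaced by $2\sqrt{1+r^2(f'(r))^2}$.

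Next I would prove the pointwise estimate via Jensen's inequality applied to the even convex function $t\mapsto \sqrt{1+r^2 t^2}$; replacing $\theta_i'$ by $(-1)^i\theta_i'$ (which leaves the summand unchanged),
\begin{align*}
\sum_{i=1}^{2k}\sqrt{1+r^2(\theta_i'(r))^2}
 &\ge 2k\,\sqrt{1+r^2\Bigl(\tfrac{1}{2k}\sum_i(-1)^i\theta_i'(r)\Bigr)^2}\\
 &= 2\sqrt{k^2+r^2(f'(r))^2}\ \ge\ 2\sqrt{1+r^2(f'(r))^2}.
\end{align*}
Integrating against $\rho(r)$ yields the first claim, modulo the contribution of those arcs of $\partial A$ that lie on some circle $\partial B_{r_0}$. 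These ``vertical'' pieces project to a single value of $r$ and hence are absent from the integral above; I would handle them either by approximating $A$ by sets with $C^1$ boundary (on which such circular arcs do not occur) and using lower semicontinuity of $\mu^{+}$, or by a direct per-circle comparison showing that the circular portion of $\partial A^*$ at each $r_0$---essentially the symmetric difference of the angular sections on either side of $r_0$---has length no larger than that of $\partial A$ at $r_0$.

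Finally, for the equality statement I would track when both pointwise inequalities saturate. The bound $\sqrt{k^2+\cdot}\ge\sqrt{1+\cdot}$ is strict unless $k(r)=1$, so the finite-intersection hypothesis forces $A\cap\partial B_r$ to be a single arc $\{\alpha(r)<\theta<\beta(r)\}$ for every $r$. With $k=1$, strict convexity in Jensen forces $-\alpha'(r)=\beta'(r)$, so $(\alpha+\beta)'\equiv 0$ on each connected component of $\{r:A\cap\partial B_r\neq \emptyset\}$; connectedness of $A$ upgrades this to a single global constant $\alpha+\beta\equiv 2\theta_0$, whence $A$ agrees with the rotation of $A^*$ by angle $\theta_0$ up to a $\mu$-null set. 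The main obstacle throughout is the rigorous bookkeeping of the circular (``vertical'') components of $\partial A$; everything else is a Jensen calculation combined with standard polar-coordinate geometry.
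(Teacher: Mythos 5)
Your proof is correct and follows essentially the same route as the paper: reduce to transversal slicing of $\partial A$ by the circles $\partial B_r$, write both perimeters as radial integrals of $\sum_i\sqrt{1+r^2(\theta_i')^2}$, apply convexity of $t\mapsto\sqrt{1+r^2t^2}$ (the paper states the needed inequality in the form $\sum_i(\sqrt{1+a_i^2}+\sqrt{1+b_i^2})\ge 2\sqrt{1+\tfrac14(\sum_i(a_i-b_i))^2}$, your Jensen calculation with the intermediate $2\sqrt{k^2+r^2(f')^2}$ is exactly its proof), and for equality force a single arc per circle with $\alpha'+\beta'=0$, upgraded by connectedness of $A$ to a global rotation. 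The one place you are more explicit than the paper is in flagging the circular ("vertical") pieces of $\partial A$; the paper disposes of them by approximating with polynomial level sets (so that $\partial A_n\cap\partial B_r$ is finite) and invoking lower semicontinuity, which is the first of the two options you propose.
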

\begin{proof}

Without loss of generality we deal with a compact $A$ with $\mathcal{H}^{d-1}(\partial A) < \infty$.
It is known (see Theorem 3.42 in \cite{AFP}) that there exists a sequence
$A_n$ of smooth sets such that $I_{A_n} \to I_{A}$ almost everywhere (in the Lebesgue measure sense)  and
$\mu^{+}(\partial A_n) \to \mu^{+} (\partial A)$.
 Thus, to prove the first part of the Proposition, it is sufficient to deal
with sets with smooth boundaries.
We can even assume that every $\partial A_n$ is a level set of a polynomial
function $P_n$  restricted to a compact subset
($A_n$ obtained in the proof of Theorem 3.42 are level sets of smooth functions, we only apply the Weierstrass polynimila approximation theorem).
It is also possible to require that $\mbox{card}(A_n \cap \partial B_r) < \infty $.
Thus,  without loss of generality it is sufficient to consider the case when  $\partial A$
consists of finite number $n(r)$ of ordered Lipschitz curves $r \to r(\cos
f_{i}(r), \sin f_{i}(r))$, $r \to r(\cos g_{i}(r), \sin g_{i}(r))$
such that
$$
f_1 \le g_1 < f_2 \le g_2 < \cdots < f_n \le g_n
$$
and
$$
A = \{(r,\theta) : f_i (r) < \theta <  g_i (r)\}
$$  (we suppose  that $f_i = g_i$ just for a finite number of $r$). The further
proof  mimics the classical proof that Steiner's
symmetrization does not increase the perimeter. Indeed, for every
curve $r \to r(\cos \varphi(r), \sin \varphi(r))$ one has
$$
ds^2 = (1 + r^2 (\varphi')^2) dr^2.
$$
Hence
$$
\mu^{+}(\partial A) = \int_{0}^{\infty} \Bigl( \sum_{i=1}^{n(r)} \sqrt{(1+ r^2 (f'_i)^2)} +
\sqrt{(1+ r^2 (g'_i)^2} \Bigr)
\rho(r) \ I_r \ dr,
$$
$$
\mu^{+}(\partial A^*) = 2 \int_{0}^{\infty} \sqrt{1+ r^2 \Bigl( \sum_{i=1}^{n(r)} \frac{f'_i - g'_i}{2}\Bigr)^2}
\rho(r) \ I_r \ dr,
$$
where $I_r = \{ n(r) \ne 0 \}$ (all the $r$ such that $\partial B_r \cap \partial A \ne \emptyset$).
Note that the function $\sqrt{1 + x^2}$ is convex,
hence
$$
 \sum_{i=1}^{n} \Bigl( \sqrt{1+ a_i^2} +
\sqrt{1+ b_i^2} \Bigr)
\ge
2
 \sqrt{\Bigl(1 + \frac{1}{4}  \sum_{i=1}^{n}\bigl( a_i - b_i  \bigr)^2 \Bigr)}
$$
and we get the desired inequality.

Now assume that $\mu^+(\partial A) = \mu^+(\partial A^*)$, $\partial A$ is Lipschitz
and
$\mbox{\rm card}(\partial A \cap \partial B_r) < \infty$ for every $r>0$.
Then the above formulae hold. Clearly,
$\mu^{+}(\partial A) = \mu^{+}(\partial A^*)$ is possible only if
$n=1$, hence $A = \{(r,\theta): f(r) < \theta < g(r)\}$. Moreover,
  $f'=-g'$ for $r \in [r_1, r_2]$. Hence
 $f + g$ is constant on $[r_1, r_2]$. The equality is possible only if
 $n=1$ on some interval  $r_1 <  r < r_2$ (maybe unbounded) and $n=0$ outside. In this case
 $A^*$
 is obtained from $A$ by a rotation on the constant angle $\frac{f+g}{2}$
 (up to a zero measure set).
\end{proof}

\begin{corollary}
\label{d2sym}
Let $A$ be an isoperimetric set for a planar radially symmetric
density. Assume, in addition, that $A$ is connected, open, has an analytic boundary, and $\mathcal{H}^1(\partial B_r \cap \partial A) =0$ for every $r>0$. Then $A$ is stable under
the circular symmetrization with respect to some ray.
\end{corollary}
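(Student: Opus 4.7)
The plan is to apply the Proposition to a circular symmetrization of $A$ with respect to an arbitrary ray, exploit the isoperimetric property of $A$ to force equality of surface measures, and then read off the axis of symmetry from the equality case.

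I fix any ray through the origin -- say the positive $x$-axis -- and let $A^{*}$ be the circular symmetrization of $A$ about that ray. By the Remark, $\mu(A^{*}) = \mu(A)$, and the inequality part of the Proposition gives $\mu^{+}(\partial A^{*}) \le \mu^{+}(\partial A)$. Since $A$ is isoperimetric, the reverse inequality holds automatically, so equality must occur.

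To invoke the equality clause I need $\mbox{card}(\partial A \cap \partial B_{r}) < \infty$ for every $r > 0$. Both $\partial A$ (real-analytic by hypothesis) and $\partial B_{r}$ are real-analytic embedded $1$-manifolds; if their intersection were infinite on the compact circle $\partial B_{r}$ it would accumulate, and the identity principle for real-analytic functions would then force a full open arc of $\partial B_{r}$ to lie inside $\partial A$, contradicting the hypothesis $\mathcal{H}^{1}(\partial B_{r} \cap \partial A) = 0$. Combined with the assumed connectedness of $A$, the equality clause of the Proposition applies, yielding $\mu(A^{*} \setminus U(A)) = 0$ for a suitable rotation $U(r,\theta) = re^{i(\theta+\theta_{0})}$.

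Tracing the equality case of the Proposition one in fact obtains $n(r) = 1$ on its whole radial range, together with $f_{1} + g_{1}$ constant there. That constant is twice the angle $\alpha$ of a ray $\ell$ about which each radial slice of $A$ is a single reflectionally symmetric arc; consequently the circular symmetrization of $A$ with respect to $\ell$ acts as the identity on $A$, which is exactly the claimed stability. The only non-routine step is the finite-intersection claim, which reduces to the identity principle for real-analytic functions; the rest is a direct translation of the equality part of the Proposition.
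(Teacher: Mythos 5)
Your proof is correct and follows essentially the same route as the paper's own (terse) argument: observe that real analyticity of $\partial A$ together with $\mathcal{H}^1(\partial B_r \cap \partial A)=0$ forces $\mathrm{card}(\partial B_r\cap\partial A)<\infty$, and then invoke the equality case of the preceding Proposition. Your elaboration of the finite-intersection step via the identity principle for real-analytic functions, and the reading-off of the symmetry axis from $n(r)=1$ and $f_1+g_1$ constant, simply spells out what the paper leaves implicit.
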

\begin{proof} It is sufficient to note that the analyticity of $\partial A$ and $\mathcal{H}^1(\partial B_r \cap \partial A) =0$
imply that $\mbox{\rm card}(\partial B_r \cap \partial A) < \infty$ and apply the previous Proposition.
\end{proof}

Analogously to the circular symmetrization let us introduce the spherical symmetrization on $\R^d$.

\begin{definition}
We say that a set $A^* \subset \R^d$ is obtained from $A \subset \R^d$ by the spherical
 symmetrization with respect to the
ray $R_a = \{ t a :  t \ge 0\}$ associated to a vector $a \in \R^d$, $a \ne 0$, if
for every $r \ge 0$ the set the $\partial B_r \cap A^*$ has the same Hausdorff $\mathcal{H}^{d-1}$-measure
as  $\partial B_r \cap A$ and, in addition,
 $\partial B_r \cap A^*$  is a spherical cap centered at $R_a \cap \partial B_r$.
\end{definition}

We don't prove here that the spherical symmetrization does not increase the surface measure (see \cite{MHH}).
Nevertheless, we show that every  isoperimetric set satisfying some additional technical assumptions
is stable under a spherical symmetrization. To this end let us introduce an intermediate operation.

\begin{definition}
Let $d \ge 3$ and $A$ be a Borel set.
 The set $A^{*}_{x_1,x_2}$ is determined by the following requirements.
 Fix coordinates $x_3, \cdots, x_d$. The intersection of every circle
 $$C_R = \{(x_1,x_2): x^2_1 + x^2_2 =R^2\}$$ with $A^{*}_{x_1,x_2}$ is an open arc $l_R \subset C_R$
 satisfying:
 \begin{itemize}
 \item[1)]
 $l_R$ has the same length as $l_R \cap A$ ($l_R = C_R$ if $C_R \subset A$ )
 \item[2)]
 the center $M=(x_1, x_2)$ of $l_R$ is uniquely determined by the requirement
 $x_1 \ge 0, x_2=0$.
 \end{itemize}
\end{definition}

In what follows we associate to any arbitrary  vector $a=(a^1, \cdots, a^d)$ the following
matrix $Q_a$:
$$
(Q_a)_{ij} =  a^i \cdot a^j.
$$

The following lemma follows from the convexity
of the function $x \to \det^{1/2} (I + Q_x)$.

\begin{lemma}
\label{matrix}
Let $M$ be a symmetric positive matrix. For a number of vectors $v_1, \cdots, v_{2n}$
the following inequality holds
$$
\sum_{k=1}^{2n} {\det}^{1/2} (M + Q_{v_k})
\ge  2 \cdot {\det}^{1/2} (M + Q_{v}),
$$
where $v = \frac{\sum_{k=1}^{n} v_k - \sum_{k=n+1}^{2n} v_k }{2}$.
In addition, an equality holds if and only if $n=1$
and $v_1=-v_2$.
\end{lemma}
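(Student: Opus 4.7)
The plan is to follow the hint literally: reduce the matrix statement to the one-variable inequality coming from the fact that $\det^{1/2}(I+Q_x) = \sqrt{1+|x|^2}$ is convex, and then finish by Jensen plus a trivial monotonicity estimate in $n$.

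First I would invoke the matrix determinant lemma to collapse the determinants to a Euclidean norm. Since $M$ is positive definite,
\[
\det(M+v v^{T}) = \det(M)\bigl(1+\langle M^{-1}v,v\rangle\bigr),
\]
so introducing $w_k := M^{-1/2} v_k$ and dividing by $\det^{1/2}(M)$ reduces the asserted inequality to
\[
\sum_{k=1}^{2n} \sqrt{1+|w_k|^2} \;\ge\; 2\sqrt{1+|w|^2},\qquad
w=\frac{1}{2}\Bigl(\sum_{k=1}^{n} w_k-\sum_{k=n+1}^{2n} w_k\Bigr),
\]
because $M^{-1/2}$ is linear and therefore commutes with the formation of $v$. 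Note that $\det^{1/2}(I+Q_x)=\sqrt{1+|x|^2}$ by the matrix determinant lemma, so this is exactly the convexity hint promised in the text.

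Next I would set $u_k := w_k$ for $k\le n$ and $u_k := -w_k$ for $k>n$, so that $|u_k|=|w_k|$ and the barycentre is
\[
\bar u := \frac{1}{2n}\sum_{k=1}^{2n} u_k = \frac{w}{n}.
\]
Applying Jensen's inequality to the (strictly) convex function $\varphi(x)=\sqrt{1+|x|^2}$ yields
\[
\sum_{k=1}^{2n}\sqrt{1+|w_k|^2} \;=\; \sum_{k=1}^{2n}\varphi(u_k)\;\ge\; 2n\,\varphi(\bar u) \;=\; 2\sqrt{n^2+|w|^2} \;\ge\; 2\sqrt{1+|w|^2},
\]
where the last step uses $n\ge 1$. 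This chain already gives the desired bound.

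For the equality case, strict convexity of $\varphi$ forces $u_1=\cdots=u_{2n}$ whenever Jensen is saturated, and the final estimate $\sqrt{n^2+|w|^2}\ge\sqrt{1+|w|^2}$ is an equality only when $n=1$; combined, these give $w_1=-w_2$, matching the stated equality condition. I do not expect any real obstacle here: the only subtle point is the bookkeeping that turns the signed combination $v$ of the statement into the unsigned average $\bar u$, after which everything is one-line Jensen plus the trivial monotonicity $n^2\ge 1$.
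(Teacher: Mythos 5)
Your proof is correct and implements exactly the argument the paper only gestures at (the paper's entire justification is the one sentence ``The following lemma follows from the convexity of the function $x \to {\det}^{1/2}(I + Q_x)$''): the matrix determinant lemma reduction ${\det}^{1/2}(M+Q_v) = {\det}^{1/2}(M)\sqrt{1+|M^{-1/2}v|^2}$, the sign flip turning $v$ into the barycentre of the $u_k$, Jensen for the strictly convex $\varphi(x)=\sqrt{1+|x|^2}$, and finally $\sqrt{n^2+|w|^2}\ge\sqrt{1+|w|^2}$. The equality analysis (strict convexity forces all $u_k$ equal, and the last step forces $n=1$) is also correct.
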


\begin{proposition}
\label{symmetry}
Let $\mu$ be a radially symmetric measure.
Then $\mu^{+}(\partial A^*_{x_1 x_2}) \le \mu^{+}(\partial A)$.

Assume that every isoperimetric set of $\mu$ has analytic boundary. Let
$A$ be isoperimetric, connected, and $\mathcal{H}^1(C \cap \partial A)=0$
for every circle $$C = \{ x_1^2 + x^2_i = R^2; x_j = a_j\}, \ j \in \{2, \cdots, d\} \setminus \{i\} $$
with fixed $i \in \{2, \cdots, d\}$, $R>0$, $a_j \in \R$. Then there exists a ray $R_a = \{t \cdot a, t \ge 0\}$, $a \in R^d \setminus \{0\}$ such that every nonempty intersection of $A$ with any ball $B_r = \{x: |x| \le r\}$ is a spherical cap  (up to zero measure) with the center at $R_a \cap B_r$.
\end{proposition}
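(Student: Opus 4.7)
The plan is to adapt the preceding two-dimensional argument, replacing the scalar convexity inequality $\sqrt{1+a^2}+\sqrt{1+b^2}\ge 2\sqrt{1+\bigl(\tfrac{a-b}{2}\bigr)^2}$ by the matrix inequality of Lemma \ref{matrix}. As in the planar case, I would first reduce to the situation where $\partial A$ is a polynomial level set via Theorem 3.42 of \cite{AFP} and the Weierstrass approximation theorem; this ensures $\mbox{card}(\partial A\cap C_R)<\infty$ for almost every circle $C_R=\{x_1^2+x_2^2=R^2,\ x_3,\ldots,x_d\text{ fixed}\}$.

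Working in cylindrical coordinates $(R,\theta,x_3,\ldots,x_d)$ with $R=\sqrt{x_1^2+x_2^2}$, one may write the set on almost every flat $\{x_3,\ldots,x_d\text{ fixed}\}$ as a disjoint union
\[
A=\{\varphi_i(R,x_3,\ldots,x_d)<\theta<\psi_i(R,x_3,\ldots,x_d),\ i=1,\ldots,n\}
\]
with Lipschitz $\varphi_i,\psi_i$ satisfying $\varphi_1\le\psi_1<\varphi_2\le\cdots<\psi_n$. A direct computation of the induced metric on the hypersurface $\theta=\varphi(R,x_3,\ldots,x_d)$ in the coordinates $(R,x_3,\ldots,x_d)$ gives the Gram matrix $I+Q_{R\nabla\varphi}$, where $\nabla\varphi=(\varphi_R,\varphi_{x_3},\ldots,\varphi_{x_d})$. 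Thus
\[
\mu^{+}(\partial A)=\int\sum_{i=1}^{n}\Bigl({\det}^{1/2}(I+Q_{R\nabla\varphi_i})+{\det}^{1/2}(I+Q_{R\nabla\psi_i})\Bigr)\rho(r)\,dR\,dx_3\cdots dx_d,
\]
while the symmetrized set has a single arc per slice of angular width $\sum_i(\psi_i-\varphi_i)$ centered at $\theta=0$, giving
\[
\mu^{+}(\partial A^{*}_{x_1,x_2})=2\int{\det}^{1/2}(I+Q_{R\nabla\varphi^{*}})\rho(r)\,dR\,dx_3\cdots dx_d,\qquad \nabla\varphi^{*}=\tfrac{1}{2}\sum_i(\nabla\varphi_i-\nabla\psi_i).
\]
Applying Lemma \ref{matrix} pointwise with $M=I$ and the $2n$ vectors $R\nabla\varphi_i,R\nabla\psi_i$ yields the pointwise inequality, and passing to the limit in the approximation settles the first claim.

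For the rigidity statement, since $A$ is isoperimetric and spherical symmetrization preserves $\mu$-measure, $\mu^{+}(\partial A)=\mu^{+}(\partial A^{*}_{x_1,x_i})$ for each $i\in\{2,\ldots,d\}$. The equality clause of Lemma \ref{matrix} forces $n=1$ on almost every slice together with $\nabla\varphi_1=-\nabla\psi_1$, so $\varphi_1+\psi_1$ is locally constant. Analyticity of $\partial A$, connectedness of $A$, and the measure-zero hypothesis on circle intersections promote this to a single global constant $c_i$, meaning that each $(x_1,x_i)$-planar section of $A$ is a single arc symmetric about a fixed line through the origin. Combining these $d-1$ reflectional symmetries pins down a unique ray $R_a\subset\R^d$ with the stated spherical-cap property.

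The main obstacle I expect is twofold. The first is the Gram-determinant computation in general dimension (for which Lemma \ref{matrix} is tailor-made and effectively does all the heavy lifting once the matrix form of the area element is in place). The more delicate one is promoting the local constants $c_i$ coming from the equality analysis to global constants that are mutually compatible across different $i$; this is precisely where the three hypotheses of analyticity, connectedness, and measure-zero of boundary-circle intersections are genuinely used.
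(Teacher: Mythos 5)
Your treatment of the first inequality is essentially the paper's argument, with one cosmetic difference: you parametrize each boundary sheet by the cylindrical radius $R = \sqrt{x_1^2+x_2^2}$ together with $x_3,\dots,x_d$, obtaining the Gram matrix $I + Q_{R\nabla\varphi}$, whereas the paper uses the Euclidean radius $r=|x|$ together with $\tilde{x}=(x_3,\dots,x_d)$, which produces a more complicated $M$ in $M + (r^2-\tilde{x}^2)Q_{\nabla\theta_k}$. Both are correct and both reduce the pointwise estimate to Lemma~\ref{matrix}; your choice makes $M=I$ and is arguably cleaner, while the paper's choice makes the radial density $\rho(r)$ appear directly as a coordinate. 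That part is fine.

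There is a real gap in the rigidity half. You correctly extract from the equality case of Lemma~\ref{matrix} that each $x_1x_i$-symmetrization leaves $A$ unchanged up to a rotation, and thus (after composing the $d-1$ symmetrizations) one may assume that $A$ is reflection-symmetric about each hyperplane $\{x_i=0\}$, $i\ge 2$, and that every circular slice in a plane parallel to an $x_1x_i$-plane is a single centered arc. But your final sentence, that ``combining these $d-1$ reflectional symmetries pins down a unique ray with the stated spherical-cap property,'' does not follow. A set can enjoy all $d-1$ of these coordinate circular symmetries without its spherical sections being caps: in $\R^3$ the ``square cone'' $\{x_1>\max(|x_2|,|x_3|)\}$ has every $x_1x_2$- and $x_1x_3$-planar circular slice equal to a single arc centered on the positive $x_1$-axis, yet its intersection with a sphere $\partial B_r$ is a square-shaped spherical region, not a cap. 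What you are missing is precisely the step the paper supplies via Hsiang symmetrization: from the coordinate symmetries one passes to reflection symmetry with respect to an \emph{arbitrary} hyperplane $\pi$ through the origin containing the $x_1$-axis, by splitting $\tilde{A}$ into two $\mu$-equal halves $A^{\pm}$, replacing $\tilde{A}$ by $A^{+}\cup s_\pi(A^{+})$, and invoking regularity of isoperimetric boundaries to force $\partial\tilde{A}$ to meet $\pi$ orthogonally. Only this full rotational symmetry about the $x_1$-axis, together with the arc structure of each circular slice, yields the spherical-cap conclusion. Without that step, the isoperimetric hypothesis is not genuinely used in the concluding sentence, and the argument does not close.
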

\begin{proof}
Let us show the first part. Without loss of generality we assume that $A$ is compact.
Consider the symmetrized set $A^*_{x_1
x_2}$. Clearly, $A^*_{x_1 x_2}$ has the same $\mu$-measure as $A$.
Let us show that $A^*_{x_1 x_2}$ has a smaller surface measure. Arguing in the same way as in the previous Proposition
we can assume that
$\partial A$ consists of finite number of smooth surfaces $S_{k}$
and
intersection of every $S_k$ with every
 circle $C = \{x^2_1 + x^2_2=R^2\}$ (other $x_i$ are fixed) consists of finite number of points.
Let us parametrize every surface $S_k$ in the following way:
$$
F_k : (r,  \tilde{x}) \to \bigl(\sqrt{r^2-{\tilde x}^2} \cos \theta_i(r, \tilde{x}),  \sqrt{r^2-\tilde{x}^2} \sin \theta_i(r, \tilde{x}), \tilde{x}\bigr),
$$
where $r$ is the distance from $F_k$ to the origin and  $\theta_k$ is the angle between the $O_{x_1}$-axis
and the projection of  $F_k$ onto $O_{x_1 x_2}$-plane, and $\tilde{x} = (x_3, \cdots, x_d)$.
In addition,
$$
A \cap C = \cup_{k=1}^{n} \{\theta_k < \theta < \theta_{n+k}\}.
$$
The first fundamental form $G_k$ of $S_k$ has the following
 representation in $(r, \tilde{x})$-coordinates:
 $$
 G_k=M+Q_k,
 $$
 where
 $$
 M_{rr} = \frac{r^2}{r^2-\tilde{x}^2}, \ M_{rx_i} = - \frac{r x_i}{r^2-\tilde{x}^2} , \ M_{x_i x_j} = \delta_{ij} + \frac{x_i x_j}{r^2-\tilde{x}^2}
 $$
 and
$$
Q_k = (r^2 - \tilde{x}^2) Q_{\nabla \theta_k},
$$
where $\nabla \theta_k = (\partial_r \theta_k, \partial_{x_3} \theta_k, \cdots, \partial_{x_d} \theta_k)$.
Hence
$
\mu^{+}(\partial A)
$
is equal to
$$
 \sum_{k=1}^{2n} \int_{0}^{\infty}  \int_{B_r}  {\det}^{1/2} \bigl( M + (r^2-\tilde{x}^2) Q_{\nabla \theta_k} \bigr) I(r, \tilde{x})  \ d\tilde{x} \ \rho(r) \ dr.
$$
Here
$
I(r, \tilde{x})
$
is the set of $(r, \tilde{x})$ such that $C$ has a non-empty intersection with
$A$.
Clearly,
$$
\mu^{+}(\partial A^*_{x_1 x_2})
= 2
 \int_{0}^{\infty}  \int_{B_r}
   {\det}^{1/2} \bigl( M + (r^2-\tilde{x}^2) Q_{\nabla \theta} \bigr)  I(r, \tilde{x}) \ d\tilde{x} \ \rho(r) \ dr,
$$
where
$$
\tilde{\theta} = \frac{\sum_{k=1}^{n} \theta_k - \sum_{k=n+1}^{2n} \theta_k }{2}.
$$
The desired inequality follows from Lemma \ref{matrix}.

Let us prove the second part. Take an isoperimetric set $A$ satisfying the assumptions. Note that the above formulae hold for $A$ and, in addition,
 $\mu(\partial A) = \mu(\partial A^*) $. This is possible  if and only if $n=1$   and $\nabla (\theta_2+\theta_1)=0$ on $r_1 \le r \le  r_2$
and $n=0$ for other values of $r$.
But this means that $A^*_{x_1 x_2}$ is obtained from
$A$ by a rotation.
Applying consequently  $x_1 x_i$-symmetrizations to $A$, we obtain a set $\tilde{A}$.
Since every $x_1 x_i$-symmetrization does not increase the surface measure,
the set $\tilde{A}$ is obtained  by a rotation of $A$ (up to measure zero).
 In addition, $\tilde{A}$ is symmetric with respect to any hyperplane $\pi_i = \{x_i=0\}$, $i>1$
  and $\partial \tilde{A}$ is connected.

 Now let us show that $\tilde{A}$ is symmetric with respect to any hyperplane $\pi$
 passing through the origin and containing $x_1$-axis. Indeed, since $\mu$ and
 $\tilde{A}$ are symmetric, the hyperplane $\pi$ divides $\tilde{A}$ in two pieces $A^+ \cup A^-$ with the same measure.
 Clearly, the Hsiang symmetrization $A^+ \cup s_{\pi}(A^+)$ ($s_{\pi}$ is the reflection with respect to $\pi$)
 is an isoperimetric set. Since the isoperimetric sets have smooth boundaries, it is possible if and only if
  $\partial \tilde{A}$ intersects  $\pi$ orthogonally. Hence  $A \cap \partial B_r$
 is a spherical cap  with the center at the $x_1$-axis for every $r>0$.
\end{proof}

\section{Stationary circular symmetric sets}

In this section we study the stationary sets of a radially symmetric measure
$\mu=e^{-v(r)} \ dx$ on the plane.

\begin{lemma}
\label{variat}
Assume that for some smooth $f$ one has:
\begin{equation}
\label{regular}
A = \{(r, \theta) : -f(r) < \theta < f(r) \}, \ f(r) \in [0,\pi]
\end{equation}
and $A$ is a stationary set.
Then $f$ satisfies
\begin{equation}
\label{u}
\dot{u} - \dot{v} u = c r,
\end{equation}
where $c$ is a
constant and
\begin{equation}
\label{f}
u=\frac{r^2 \dot{f}}{\sqrt{1 + r^2 (\dot{f})^2 } }.
\end{equation}
\end{lemma}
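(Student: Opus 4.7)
The plan is to translate stationarity into an Euler--Lagrange equation for $f$. Parametrizing $\partial A$ by the two curves $\theta = \pm f(r)$ and recalling that the polar arc length element is $ds = \sqrt{1 + r^2 \dot f^2}\,dr$, I write
\begin{equation*}
\mu^{+}(\partial A) = 2\int_0^\infty e^{-v(r)} \sqrt{1 + r^2 \dot f(r)^2}\, dr, \qquad \mu(A) = 2\int_0^\infty f(r)\, r\, e^{-v(r)}\, dr.
\end{equation*}
Among the volume-preserving deformations $A_t = \phi_t(A)$ appearing in the definition of stationarity it is enough to test against those respecting the symmetry $\theta \mapsto -\theta$, i.e. those that merely deform $f$; such one-parameter families correspond to perturbations $f \mapsto f + \varepsilon h$ subject to the linearized volume constraint $\int_0^\infty r\, h(r)\, e^{-v(r)}\, dr = 0$.

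Stationarity then says that the first variation of the perimeter functional vanishes on every such $h$, so by the Lagrange multiplier principle there is a constant $c$ with
\begin{equation*}
\int_0^\infty e^{-v(r)} \frac{r^2 \dot f(r)\, \dot h(r)}{\sqrt{1 + r^2 \dot f(r)^2}}\, dr = -c \int_0^\infty r\, h(r)\, e^{-v(r)}\, dr
\end{equation*}
for every compactly supported $h$ in the region where $f \in (0,\pi)$. Integrating the left-hand side by parts (the boundary terms vanish by compactness of support) and applying the fundamental lemma of the calculus of variations gives
\begin{equation*}
\frac{d}{dr}\!\left[e^{-v(r)}\, u(r)\right] = c\, r\, e^{-v(r)},
\end{equation*}
with $u$ defined by (\ref{f}). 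Expanding the derivative on the left and dividing by $e^{-v(r)}$ yields precisely equation (\ref{u}).

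The main subtlety I foresee is justifying that the restricted class of symmetric, measure-preserving variations is rich enough to force the full Euler--Lagrange equation. One must verify that every admissible $h$ is realized, to first order, as the effect on $f$ of some smooth flow $\phi_t$ with $\mu(A_t) = \mu(A)$; this uses the smoothness of $f$ and the non-degeneracy of the linear constraint $\int r h e^{-v}dr = 0$ (any necessary discrepancy in volume can be absorbed by a second-order correction supported away from the support of $h$). Once this functional-analytic setup is in place, the computation above is mechanical.
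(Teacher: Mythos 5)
Your proof is correct and follows essentially the same variational route as the paper: express the perimeter and volume as one-dimensional integrals over $r$, perturb $f$ by $h$ subject to the linearized constraint $\int r h\, e^{-v}\,dr=0$, integrate by parts, and use the Lagrange multiplier (equivalently, orthogonal-complement) argument to conclude $\dot u - \dot v\, u = cr$. The extra remark on realizing each admissible $h$ by an actual volume-preserving flow is a sensible bit of care that the paper leaves implicit, but the underlying argument is the same.
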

\begin{proof}
 One has
$$
\mu(A) = 2\int_{0}^{\infty} r f(r) \ \rho(r) dr,
$$
$$
\mu^{+}(\partial A) = 2\int_{0}^{\infty} \sqrt{1 + r^2 (f')^2} \
\rho(r)dr.
$$
Let us compute a variation of $\mu^{+}(\partial A)$
under the constraint $\mu(A)=C$. Consider an infinitesimal variation
$f + \varepsilon \varphi$
of $f$ by a smooth compactly supported function $\varphi$.
Since we keep $\mu(A)$ constant, we assume that  $\int \varphi r \rho \ dr=0$.
One obtains
$$
 \int_{0}^{\infty} \frac{r^2 f' \varphi'}{\sqrt{1 + r^2 (f')^2}} \ \rho(r) dr =0.
$$
Integrating by parts one gets
$$
 \int_{0}^{\infty}\Bigl(
 \bigl[ \frac{r^2 f' }{\sqrt{1 + r^2 (f')^2}}\bigr]' - v'(r)  \bigl[ \frac{r^2 f' }{\sqrt{1 + r^2 (f')^2}}\bigr]  \Bigr) \ \varphi \ \rho(r) \ dr =0.
$$
Taking into account that this holds for every smooth $\varphi$ with
$
\int \varphi r \rho \ dr =0,
$
one gets that $f(r)$ satisfies
$$
 \bigl[ \frac{r^2 f' }{\sqrt{1 + r^2 (f')^2}}\bigr]' - v'(r)  \bigl[ \frac{r^2 f'}{\sqrt{1 + r^2 (f')^2}}\bigr] = c \cdot r
$$
for some constant $c$.
The proof is complete.
\end{proof}

\begin{remark}
Clearly,  Lemma \ref{variat} can be generalized to higher dimensions.
Let $d=3$ and $\partial A$ is parametrized in the following way:
$$
\left\{
\begin{array}{lcr}
x = r \sin f(r) \cos \varphi \\
y = r \sin f(r) \sin \varphi \\
z = r \cos f(r). \\
\end{array}
\right.
$$
Then
$$
\mu(A) = 4 \pi \int_{0}^{\infty} r^2  (1-\cos f) \rho(r) dr,
$$
$$
\mu^{+}(\partial A) = 2\pi \int_{0}^{\infty}  r \sin f \sqrt{1 + r^2 (f')^2} \ \rho(r)dr.
$$
Arguing as above, we obtain
$$
 \Bigl( \frac{r^3 f'}{\sqrt{1+ (rf')^2}} \Bigr)' - v' \frac{r^3
f'}{\sqrt{1+ (rf')^2}} =  \frac{r}{\sqrt{1+ (rf')^2}} \mbox{ctg} f
- c r^2.
$$
\end{remark}

\begin{remark}
We remark that equations (\ref{u})-(\ref{f}) follows also from a result of
\cite{RCBM}: an isoperimetric surface $S$ with density $e^{-V}$ has  a constant generalized
mean curvature
$$
(d-1)H -  \langle n, \nabla V \rangle,
$$
where $H$ is the Euclidean mean curvature of $S$ and $n$ is the normal vector of $S$.

Note, however, that the spheres about the origin  always have constant generalized
mean curvature for every radially symmetric density. The ball $B_{r_0}$ corresponds to the singular function
$f(r) = 2\pi \chi_{[0, r_0]}$ and $r$ is tangent to $u$ at $r_0$.
In addition, the halfspace $H_{v} = \{x: \langle x, v \rangle \ge 0\}$ through the
origin gives another example of a surface of  a constant generalized mean curvature. This set corresponds to
the constant solution $f=\frac{\pi}{2}$.
\end{remark}

\begin{example}
Consider the standard
Gaussian measure  ($v=\frac{r^2}{2}$).
Among all the solutions to (\ref{u}) take the ones growing not faster than
a linear function. These are the constants.
For
$u=c$ solve  (\ref{f}). The solution
$$
f=\arccos\Bigl(\frac{r_0}{r}\Bigr), \ \ r \ge r_0
$$
defines an isoperimetric surface  (a halfspace).
\end{example}

\begin{lemma}
\label{analytic}
Assume that $v$ is real analytic, $A$ is a connected isoperimetric set
  and $\partial A$ is  connected. Then
$A$ is either a ball
or a circularly symmetric set (with respect to some ray).

If $A$ is circularly symmetric with respect to the $x$-axis, then
$$
\partial A \cap \{ y \ge 0\}\subset  \{r e^{if(r)}, \ r \in [r_0,r_1]\},
$$
where $f$ is a solution to (\ref{u})-(\ref{f}) and $[r_0,r_1]$ is the maximal interval of the existence
of the solution to  (\ref{u})-(\ref{f}).
\end{lemma}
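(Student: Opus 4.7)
The plan is to chain together the three tools developed in the excerpt: the regularity theorem \ref{morg}, the symmetrization corollary \ref{d2sym}, and the variational identification Lemma \ref{variat}. Real analyticity of $v$ first promotes $\partial A$ to a real analytic one-dimensional manifold via Theorem \ref{morg}. The argument then splits according to whether $\partial A$ meets some circle $\partial B_r$ in a set of positive $\mathcal{H}^1$-measure.

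Suppose first that $\mathcal{H}^1(\partial B_{r_0} \cap \partial A) > 0$ for some $r_0 > 0$. Both $\partial A$ and $\partial B_{r_0}$ are connected real analytic one-dimensional submanifolds of $\R^2$ sharing an arc of positive length, so the identity principle for real analytic curves propagates the local coincidence throughout the connected curve $\partial A$ and forces $\partial A = \partial B_{r_0}$. Since $A$ is open and connected with this boundary, $A$ equals $B_{r_0}$ (or its exterior, if $A$ is unbounded), that is, a ball about the origin.

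In the opposite case $\mathcal{H}^1(\partial B_r \cap \partial A) = 0$ for every $r > 0$, so Corollary \ref{d2sym} applies and furnishes a ray $R_a$ making the circular symmetrization inequality sharp. The rigidity clause of the preceding Proposition then gives $\mu(A^* \setminus U(A)) = 0$ for some rotation $U$, and real analyticity of both $\partial A$ and $\partial A^*$ upgrades this almost-everywhere agreement to a set-theoretic equality $A = U^{-1}(A^*)$. After rotating coordinates so that $U^{-1}(R_a)$ is the positive $x$-axis, we write $A = \{(r,\theta) : -f(r) < \theta < f(r)\}$ with $f$ real analytic wherever $0 < f < \pi$. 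Isoperimetry implies stationarity, so Lemma \ref{variat} yields that $f$ satisfies the ODE (\ref{u})--(\ref{f}); letting $[r_0, r_1]$ be the maximal interval of existence of this analytic solution, $\partial A \cap \{y \ge 0\}$ must lie in $\{r e^{if(r)} : r \in [r_0, r_1]\}$ because no analytic prolongation as a solution of the ODE exists past these endpoints.

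The main obstacle is the rigidity step in the second case: one has to promote the purely measure-theoretic identity $\mu(A^* \setminus U(A)) = 0$ supplied by the Proposition to an actual geometric equality of open sets, which is precisely where the analyticity of $\partial A$ is needed. A secondary subtlety is the behavior at the endpoints $r_0, r_1$: there $f$ may reach $0$ or $\pi$, or its derivative may blow up so that $\partial A$ becomes tangent to some concentric circle, and one must check that each such geometric degeneration matches exactly the breakdown of the ODE solution in Lemma \ref{variat}.
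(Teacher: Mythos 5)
Your proof follows the same overall structure as the paper's: regularity (Theorem \ref{morg}) makes $\partial A$ real analytic, a case split on $\mathcal{H}^1(\partial B_r \cap \partial A)$ gives the ball case via the identity principle, and Corollary \ref{d2sym} followed by Lemma \ref{variat} handles the circularly symmetric case. But the final step contains a genuine gap, which you have in fact flagged yourself (your ``secondary subtlety'') without closing.

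The inclusion $\partial A \cap \{y\ge 0\} \subset \{re^{if(r)} : r\in[r_0,r_1]\}$ does \emph{not} follow from ``no analytic prolongation as a solution of the ODE exists past these endpoints.'' The maximal interval can end at $r_1$ because $f'(r_1)=\infty$ while $f(r_1)\in(0,\pi)$, i.e.\ $\partial A$ becomes tangent to the circle $\partial B_{r_1}$. At such a point $\partial A$ \emph{does} continue analytically as a curve --- it merely ceases to be a graph $\theta=f(r)$. The nontrivial content of the lemma is to rule this out. The paper does so by analyzing that continuation: parametrizing it as a new graph over $r$ for $r$ slightly below $r_1$, stationarity forces it to satisfy the same ODE, and by uniqueness of analytic solutions with matching data at the tangency point it must coincide with the reflection of the original branch across the line $\theta=f(r_1)$. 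But then, for $r$ slightly below $r_1$, the boundary meets $\partial B_r$ in at least four points (the two branches together with their $y$-reflections), contradicting the fact that $A\cap\partial B_r$ is a single arc. Without this reflection argument the lemma is not proved; the ODE's failure to prolong as a graph is not the same as the boundary's failure to prolong as a curve.

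A minor additional remark: the upgrade from $\mu(A^*\setminus U(A))=0$ to $A=U^{-1}(A^*)$ that you perform via analyticity is fine and is implicitly also what the paper does, so that part is not at issue; the reflection step is where the argument is incomplete.
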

\begin{proof}
According to the regularity results (see Section 2), $\partial A$ is real analytic.
If $\mathcal{H}^{1}(\partial A \cap \partial B_r) >0$, then $\partial A$ contains an arc and by the uniqueness
of analytic continuation $A$ is a ball. If  $\mathcal{H}^{1}(\partial A \cap \partial B_r) = 0$, then
$A$ is symmetric with respect to some circular symmetrization by Corollary  \ref{d2sym}.

Assume that $A$ is circularly symmetric with respect to the $x$-axis. The boundary $\partial B_{R} \cap \partial A$  contains exactly two points for every  $R>0$ from an open interval $(a,b)$ and
(\ref{regular}) holds in a neighborhood of $R$ with $f$ solving (\ref{u})-(\ref{f}). Let $[r_0,r_1]$
be the interval of existence to (\ref{u})-(\ref{f}).
Assume that  $l: r \to  e^{if(r)}$, $r \in [r_0,r_1]$ does not
cover  the intersection of $\partial A$ with the halfplane $\{y \ge 0\}$.
Clearly, in this case $l([r_0, r_1])$ is compact and $f'(r_1)=\infty$.
There exists a unique analytic continuation $l_{\delta}$ of this curve for $r \in [r_1, r_1 + \delta)$ for
some $\delta>0$. This continuation does satisfy
(\ref{u})-(\ref{f}) as well (note that by the uniqueness of the continuation
 $l_{\delta}$ can not be an arc). Since $v$ is radially symmetric,  by the uniqueness of the solution to an ODE with given initial data, the curve
$l_{\delta}$ coincides with the reflection of
$l({(r_1-\delta',r_1]})$ with respect to the line $\theta =
f(r_1)$. But this clearly contradicts to the fact that $A
\cap \partial B_r$ is an arc for every $r>0$. Hence $\partial A \subset \{r e^{if(r)}, \ r \in [r_0, r_1] \}$.
\end{proof}

Let us give some examples when (\ref{u})-(\ref{f}) is explicitly solvable.

\begin{example}
Consider the Lebesgue measure  ($v=0$).
Let us find the solution to (\ref{u})-(\ref{f}) such that $u(r_0)=r_0$, $u(r_1)=r_1$.
One easily obtains
$$
u= \frac{r^2 + r_0 r_1}{r_0 + r_1}, \ r_0 + r_1 \ne 0.
$$
The solutions are the balls having the segment $[r_0,r_1]$ as the intersection with the $x$-axis.
The formula makes sense also for negative values of $r_0, r_1$.
Note that the case $r_0 + r_1=0$ corresponds to a constant $u$, hence to the balls about the origin.
In addition, infinite values of $r_0, r_1$ correspond to the half-spaces (which are also stationary).
\end{example}

\begin{example}
{\bf Stationary symmetric sets for $\mu=\frac{dx}{r}$}.

In this case the isoperimetric sets do not exist (see \cite{CJQW}).
We show by solving explicitly
 (\ref{u})-(\ref{f}) that the only  stationary sets with smooth connected boundaries are
  the balls about the origin and halfplanes passing through the origin.

Indeed, consider a  stationary set which is not a ball and not
a halfspace ($u \ne 0$). Solving $ \dot{u} - \frac{1}{r} u = ar $
we get $u = ar^2 + \lambda r$. Note that the solution to (\ref{f})
exists for $r$ satisfying $|u(r)| \le r$, thus
$$
|\lambda + ar| \le 1.
$$
By the symmetry arguments it is enough to consider the case $\lambda \ge 0$.
Assume first that $\lambda >0$.
Note that for $\lambda \le 1$ one has
$$
f(r) = \int_{0}^{r} \frac{as + \lambda}{s \sqrt{1-(\lambda + as)^2}} \ ds.
$$
But this integral diverges. Hence, it makes only sense to consider  the case
$$
\lambda >1, \ a<0.
$$
One obtains
$$
f(r) = \int_{\frac{1-\lambda}{a}}^{r} \frac{as + \lambda}{s \sqrt{1-(\lambda + as)^2}} \ ds, \ \ \
\frac{\lambda-1}{-a} \le r \le   \frac{1+\lambda}{-a}.
$$
Computing this integral we get
$$
a \int_{\frac{1-\lambda}{a}}^{r} \frac{1}{\sqrt{1-(\lambda + as)^2}} \ ds
= \arcsin (\lambda + ar) - \frac{\pi}{2} ,
$$
$$
 \lambda\int_{\frac{1-\lambda}{a}}^{r} \frac{ ds}{s \sqrt{1-(\lambda + as)^2}}
 =
 \lambda\int_{\frac{a}{1-\lambda}}^{\frac{1}{r}} \frac{dt}{ \sqrt{t^2-(t\lambda + a)^2}}.
$$
The latter integral is equal  to
$$
\frac{\lambda}{\sqrt{\lambda^2-1}} \Bigl( \frac{\pi}{2} +
\arcsin \Bigl( \frac{\lambda^2 + \lambda a r -1}{ar} \Bigr)  \Bigr).
$$
Finally, we obtain
$$
f(r)
= \arcsin (\lambda + ar) - \frac{\pi}{2}
 + \frac{\lambda}{\sqrt{\lambda^2-1}} \Bigl( \frac{\pi}{2} +
\arcsin \Bigl( \frac{\lambda^2 + \lambda a r -1}{ar} \Bigr)  \Bigr).
$$
One easily verify that this curve is non-closed. Indeed, if $r =r_1 = -\frac{1+\lambda}{a}$, one has
$$
f(r_1) = \pi \Bigl(1 -\frac{\lambda}{\sqrt{\lambda^2-1}}\Bigr)
$$
and this is neither $0$ nor $ \pi$.

Solving (\ref{u})-(\ref{f}) for $\lambda=0$  and taking into account that
the solution should be smooth for $r \ne 0$  we obtain a family of circles containing the origin
$$
x^2 + y^2 = \frac{x}{a}.
$$
It is easy to check that these circles have infinite length.
\end{example}

\begin{remark}
Some very interesting results on isoperimetric sets for measures $\mu = r^p \ dx$
have been recently obtained in \cite{DDNT}.
\end{remark}

\section{Computations of isoperimetric sets for exponential power laws}

In this section we compute numerically the isoperimetric sets of the measures
$$\mu_{\alpha} = C_{\alpha} e^{-r^{\alpha}} \ dx, \ \alpha \ge 1.$$
The isoperimetric estimates for these kind of laws have been obtained in
\cite{Huet}.

 Note that by the results mentioned in Section 2:
 \begin{itemize}
\item[1)]
 The isoperimetric sets do exist and have at least $C^{1,\varepsilon}$ boundary
for some $\varepsilon >0$. Note that the origin is the only point where the potential is not analytic.
Thus, the isoperimetric curve is analytic at any other point.
\item[2)]
The boundaries of isoperimetric sets are connected.
 \item[3)] Balls about the origin are NOT isoperimetric for $\alpha>1$.
\end{itemize}

Solving equation (\ref{u}) we get
\begin{equation}
\label{u-a}
u = a e^{r^{\alpha}} \int_{r}^{\infty} s e^{-s^{\alpha}} \ ds +
\lambda e^{r^{\alpha}}.
\end{equation}

For $\lambda=0$, $u$ has a growth of the order $r^{2-\alpha}$.
 In any other case $u$ has a growth of the order $e^{r^{\alpha}}$.

Equation (\ref{f}) is equivalent to the equation
$$
\dot{f} = \frac{u}{r\sqrt{r^2-u^2}}.
$$

Let $l = r e^{i f}$ be a curve solving (\ref{u})-(\ref{f}).
Without loss of generality we may assume that $a\geq 0$, because
for the opposite values of $a$ and $\lambda$ one obtains the curve which
is symmetric to $l$ with respect to the $y$-reflection. Note that
$|u(r)|<r$ for any $r$ in the interval of existence for $l$. For
$\alpha=1$ and $\lambda=0$ one has $u(r)=a(1+r)$. For $\alpha=2$
and $\lambda=0$ one has $u(r)=a$. Thus for $\alpha=1$, $\lambda=0$
and $0\leq a<1$ the interval of existence of $l$ is
$(r_0,+\infty)$ for some $r_0>0$ and is empty for $a\geq 1$. For
$\alpha=2$, $\lambda=0$ the interval of existence of $l$ is
$(r_0,+\infty)$ for some $r_0>0$.  We find empirically that for
$\lambda=0$ and $\alpha\geq 1$ the equation $u(r)=r$ has no more
than one solution. This implies that the interval of existence of $l$ is infinite.
If $\lambda\ne0$ then the growth rate of $u(r)$ is higher than $1$ and
the existence interval of $l$ is either empty or finite.

 Let us describe different types of
behavior of $l$.

\begin{itemize}
\item[1)]
The curve is non-compact. This corresponds to the case when
$\lambda=0$.
 Indeed,
otherwise equation $r^2 - u^2 \ge 0$ is satisfied on a compact
interval and $l$ is compact.

Assume that the curve does not touch the origin and $r_0$ is the
smallest value of $r$ such that $B_r$ and $l$ has a non-empty
intersection. The angle of rotation of the curve when $r$ changes
from $r_0$ to $r$ is equal to
\begin{equation}
\label{fu} f(r)= \int_{r_0}^{r} \frac{u}{s \sqrt{s^2-u^2}} \ ds.
\end{equation}

The full rotation of $l$ is equal to $\Delta_f= \int_{r_0}^{\infty}
\frac{u}{s \sqrt{s^2-u^2}} \ ds.$ In the case of compact curve the
full rotation is equal to $\Delta_f= \int_{r_0}^{r_1} \frac{u}{s
\sqrt{s^2-u^2}} \ ds,$ where $[r_0,r_1]$ is the largest existence
interval for $l$.

The  curves can be self-intersecting or non self-intersecting.

\includegraphics[scale=0.6]{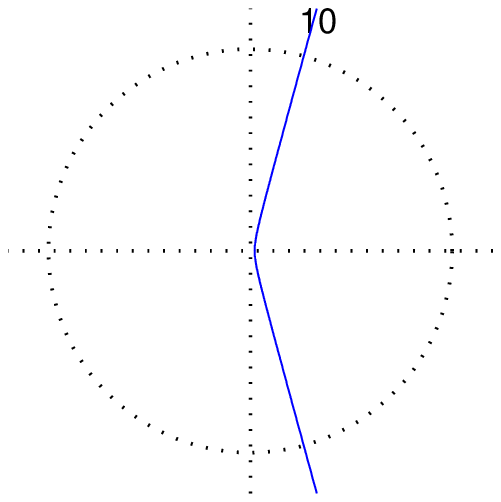}
\qquad
\includegraphics[scale=0.6]{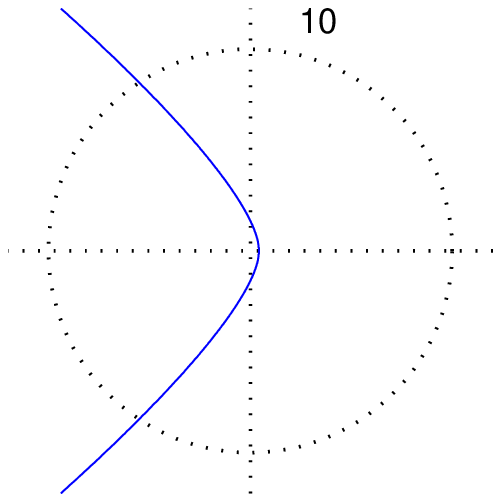}

{\bf Figure 1:} { Non-compact and non self-intersecting curves}. Smooth non-compact solutions to (\ref{u})-(\ref{f})
in the super-Gaussian case ($\alpha=3$, $a=0.5$) and sub-Gaussian case ($\alpha=1.3$, $a=0.5$).

If the full rotation exceeds $\pi$ then  two branches of $l$ have
an intersection. Obviously, in this case the curve can not be a
boundary of an isoperimetric set. Clearly, since isoperimetric
sets have smooth boundary, the part of a curve between $r_0$ and
the point of intersection  $r_1$ can be isoperimetric only if $\dot f(r_1)=\infty$.

It was realized by numerical computations that for $\alpha \geq 2$
 the full rotation is less then $\pi$ and $l$ is not
self-intersecting. If $\alpha=1$ then for all $a$ the curve $l$
is self-intersecting. It is easy to verify that for
$\alpha=1$ the full rotation is infinite and for $\alpha>1$ the
full rotation is finite.

\item[2)]

The curve intersects itself non-smoothly. This happens only for $1 \le \alpha < 2$.
Clearly, in this case the curves are not isoperimetric.

\includegraphics[scale=0.6]{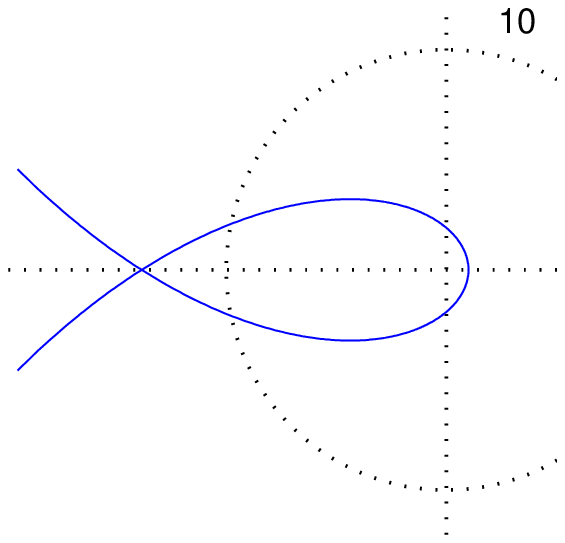}
\includegraphics[scale=0.6]{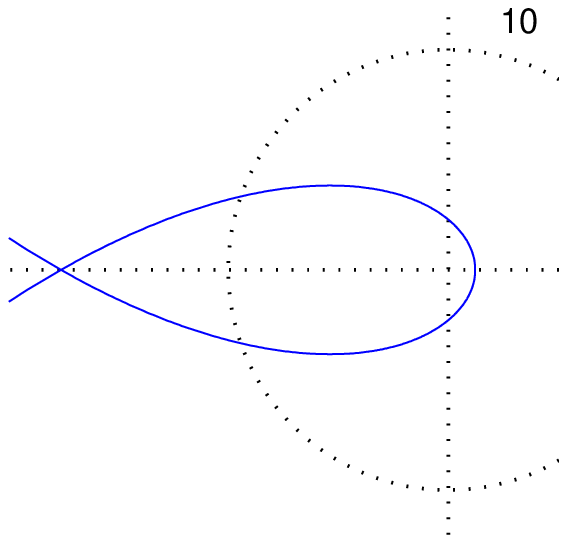}

{\bf Figure 2:} Self-intersecting curves. Solutions to (\ref{u})-(\ref{f}) in the  case
$\alpha=1$, $a=0.5$ and $\alpha=1.1$, $a=0.7$.

\item[3)]
The curve is compact but not closed ($f'_r =\infty$) for
$|f|<\pi$. This type of behavior occurs for any $\alpha \ge 1$.
These curves can not be isoperimetric since they have no analytic
circular symmetric continuation (see Lemma \ref{analytic}).

\includegraphics[scale=0.6]{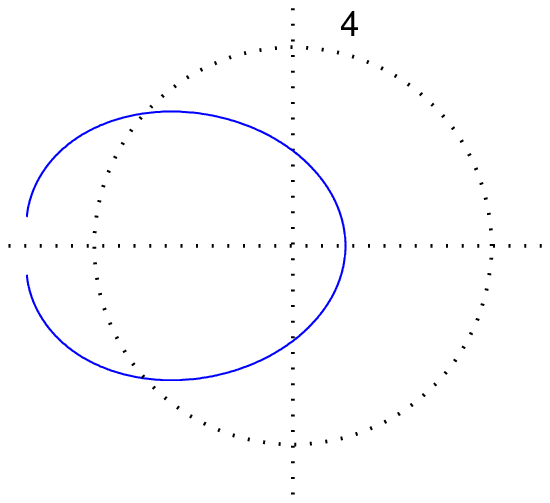}
\includegraphics[scale=0.6]{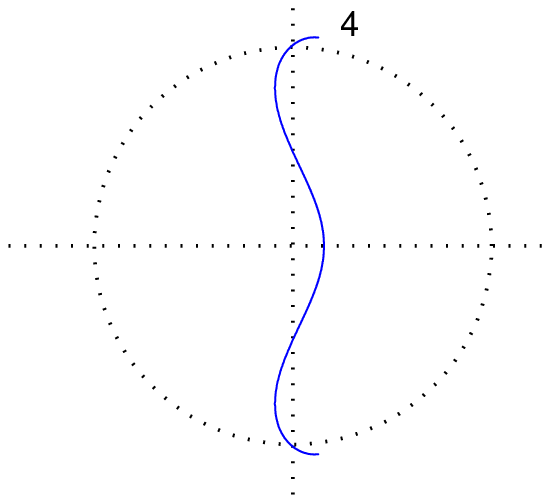}

{\bf Figure 3:} Non-closed  solutions to (\ref{u})-(\ref{f}) in the  case $\alpha=1$ ($a=0.5, \lambda=0.01$ and $a=0.5, \lambda=-0.1$).

Clearly, in this case $
0< \Delta_f < \pi.
$

 \item[4)]
The curve is closed and smooth ($f'_r=\infty$ if
 $f=\pm \pi$). These types of curves do appear for any
 $1 \le \alpha <2$.

\includegraphics[scale=0.6]{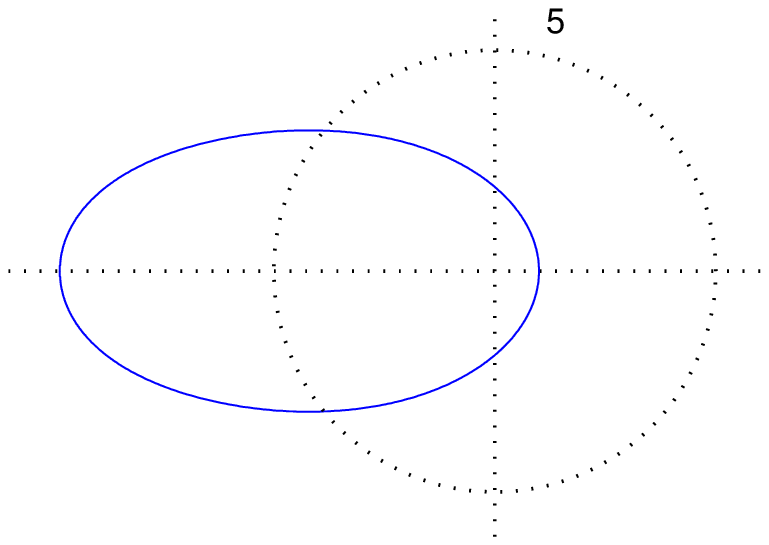}

{\bf Figure 4:} Closed smooth solution to (\ref{u})-(\ref{f}) in the  case $\alpha=1$, $a=0.5$.

\item[5)]
Special case: curve starting from the origin. The main difference
to previous cases: for smooth curves one has $f(0)=\pi/2$ (unlike
$f(r_0)=0$ for $r_0>0$).

\includegraphics[scale=0.6]{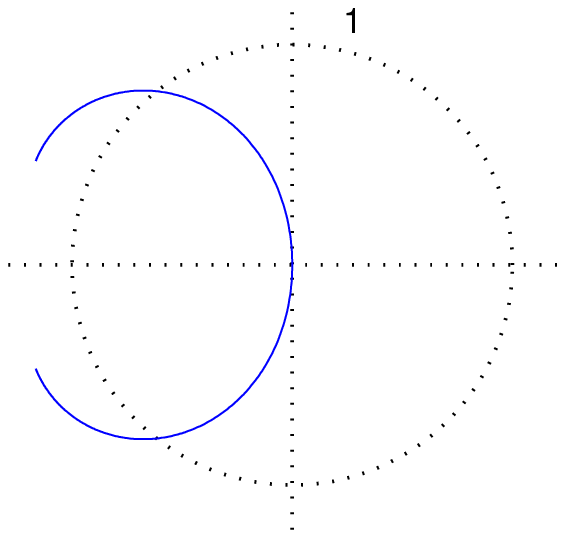}

{\bf Figure 5:} Curve starting from the origin $\alpha=1$.

\end{itemize}

We compute numerically the isoperimetric sets for the super-Gaussian ($\alpha >2$), sub-Gaussian ($1<\alpha<2$), and exponential
$\alpha=1$ distributions.
We stress  that the results below are partially justified by numerical computations.

\centerline{\bf 1) Super-Gaussian case, $\alpha >2$.}

In this case the balls around the origin are not isoperimetric.
For every $a$ and $\lambda=0$  the solutions to
(\ref{u})-(\ref{f}) are non-compact and non self-intersecting (case 1)).
In this case
\begin{equation}
\label{u-alpha}
u =  a e^{r^{\alpha}} \int_{r}^{\infty} s e^{-s^{\alpha}} \ ds
\end{equation}

It was verified numerically that  all the compact curves ($\lambda \ne 0$) solving
(\ref{u})-(\ref{f}) correspond to the case 3) but not to 2) or 4). The same  happens to
curves starting from the origin. Hence, the compact curves are not isoperimetric.

{\bf Conclusion:} For any given value of measure there exists a
unique (up to a rotation) open isoperimetric set $A$ and a unique
parameter $a$ such that $\partial A = \{ re^{if(r)} \}$ with $u$ given by
(\ref{u-alpha}) and $f$ given by (\ref{fu}). The set $A $ is one
of the sets obtained by dividing the plane by the curve $l$ in two
pieces.

\includegraphics[scale=0.7]{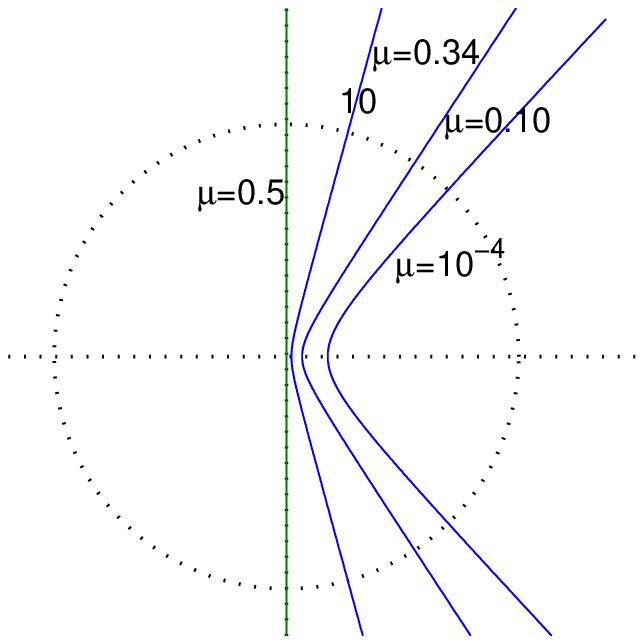}

{\bf Figure 6:} Experimental computation of the isoperimetric sets for $\alpha=3$.
The number $m$ is equal to the  value of $\mu$ of the corresponding {\bf convex} region. The green line ($A$ is a halfspace) corresponds to the case $\mu(A)=1/2$.

\centerline{\bf 2) Exponential case, $\alpha =1$.}

In this case the balls about the origin can not be excluded as eventual isoperimetric sets.

One has
\begin{equation}
\label{expon}
u=a(r+1) + \lambda e^r.
\end{equation}

It is easy to see that case 1) is not possible because
 (\ref{fu}) diverges as $r$ goes to infinity.

 In case 5) it can be verified numerically that the full rotation of the curves is less than $\pi/2$ (in this case $\theta$ starts
from $\pi/2$)
(see Figure 5.).

Now fix the parameter $a$ and change $\lambda$.
For positive values of $\lambda$ the full rotation of the curve depends
monotonically on $\lambda$. The same holds for  $-a<\lambda<0$. It turns out that for negative values of $\lambda$
the curves are either non-closed or
intersect themselves non-smoothly. This observation allows to describe
the isoperimetric sets. Indeed, there exists a unique positive value of
$\lambda$ such that both ends of the curve meet smoothly.
In addition, there exists $\lambda<0$ such that both ends of the curve meet smoothly
but in this case the  curve is self-intersecting.

\includegraphics[scale=0.8]{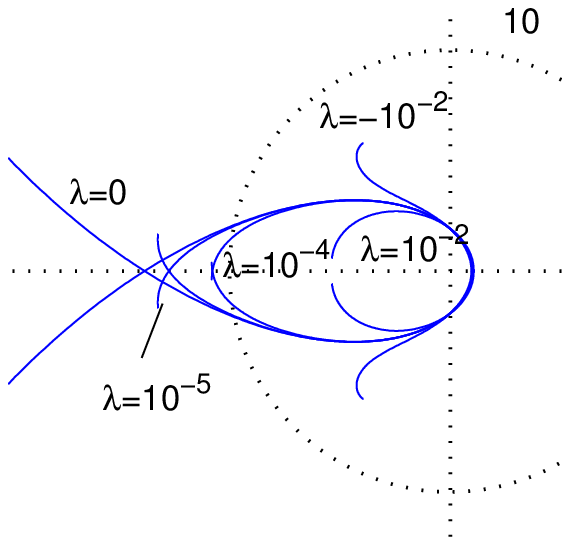}
\includegraphics[scale=0.8]{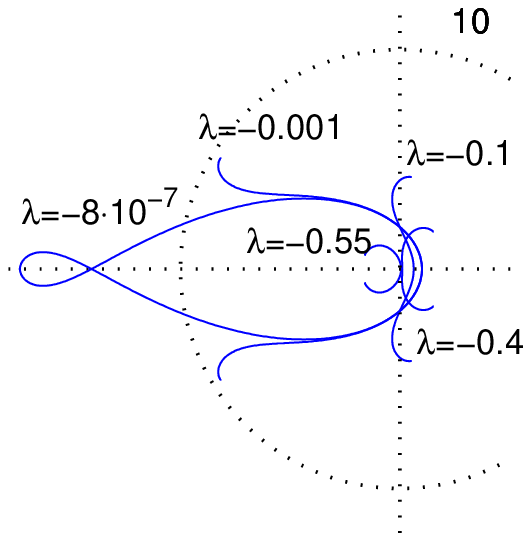}

{\bf Figure 7:} Solutions to (\ref{u})-(\ref{f}) in the  case
$\alpha=1$, $a=0.5$ and different values of $\lambda$.
The smooth curve appears for $\lambda \approx 10^{-4}.$

Thus we conclude that {\it every $a$ can have only one $\lambda$
 corresponding to a smooth  curve.}

\includegraphics[scale=0.6]{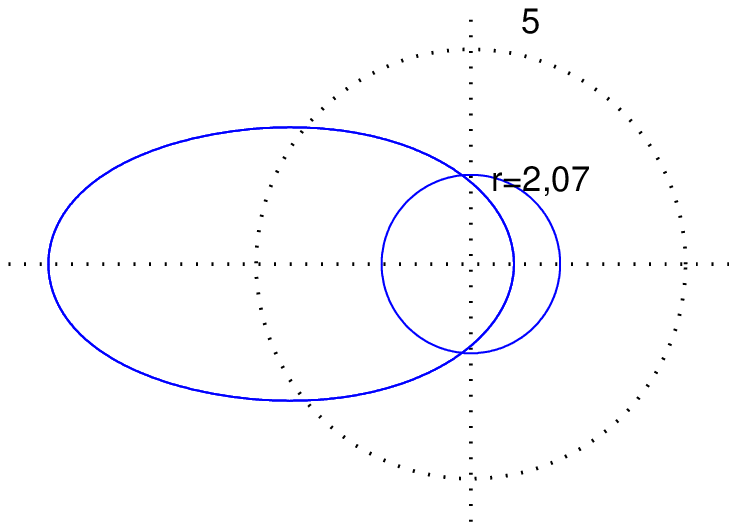}
\includegraphics[scale=0.6]{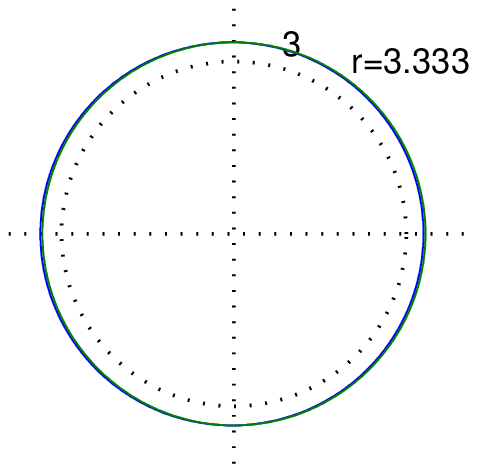}

{\bf Figure 8:} Isoperimetric sets in the  case $\alpha=1$,
$a=0.5$ and $a=0.7$ and the corresponding balls of the same
measure (they almost coincide on the picture).

We managed to find a family of smooth stationary
curves solving (\ref{u})-(\ref{f}). They do not touch the origin. According to Lemma \ref{analytic} they have only the circles around the origin as competitors. The
computations show: circles are always worse!

\includegraphics[scale=0.8]{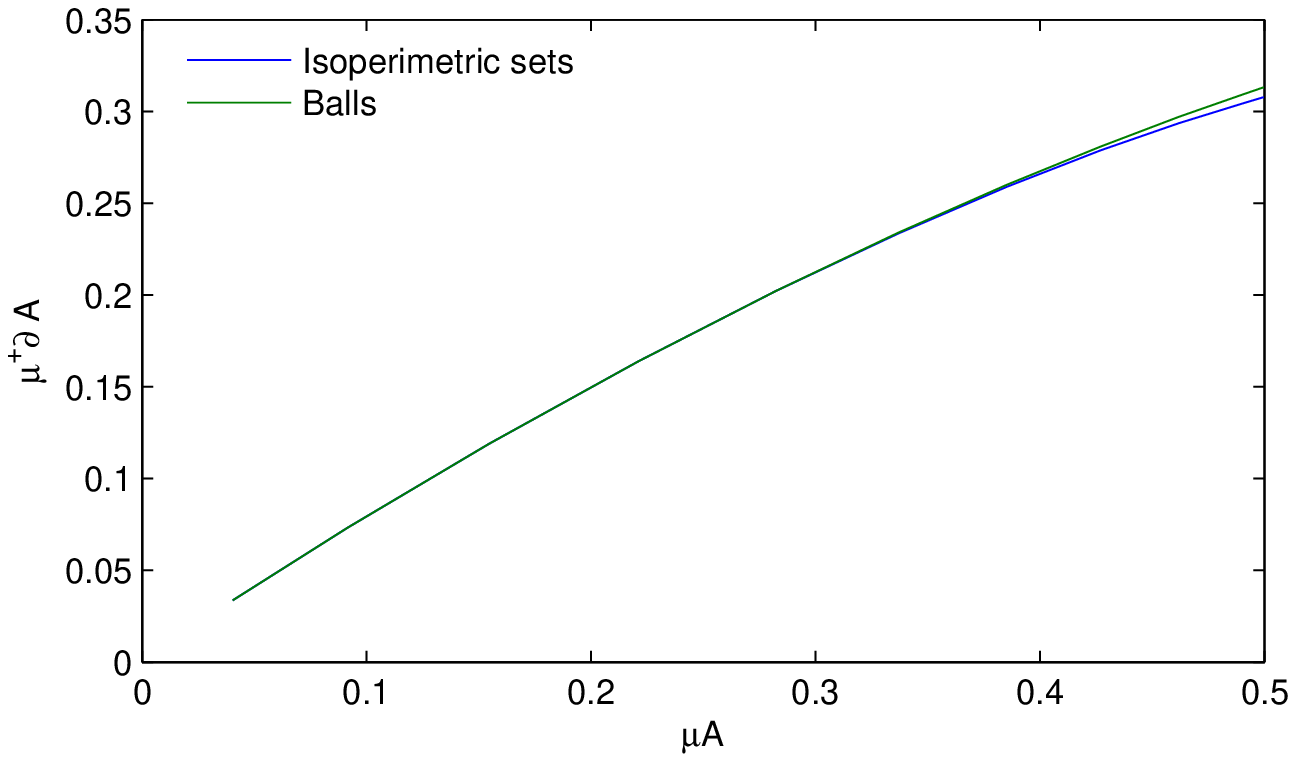}

{\bf Figure 9:} Isoperimetric function and dependence of the surface measure from the measure for the balls.

\

{\bf Conclusion:} For any given value of measure there exists a unique (up to a rotation)
isoperimetric curve $l = \partial A$ defined by a unique couple of parameters $a, \lambda$ such that
the corresponding isoperimetric set is either the compact convex set inside of $l$
or its complement. In this case
$\partial A =  \{ re^{if(r)} \}$,  $u$ is given by (\ref{expon}), and $f$ given by (\ref{fu}).

\includegraphics[scale=0.6]{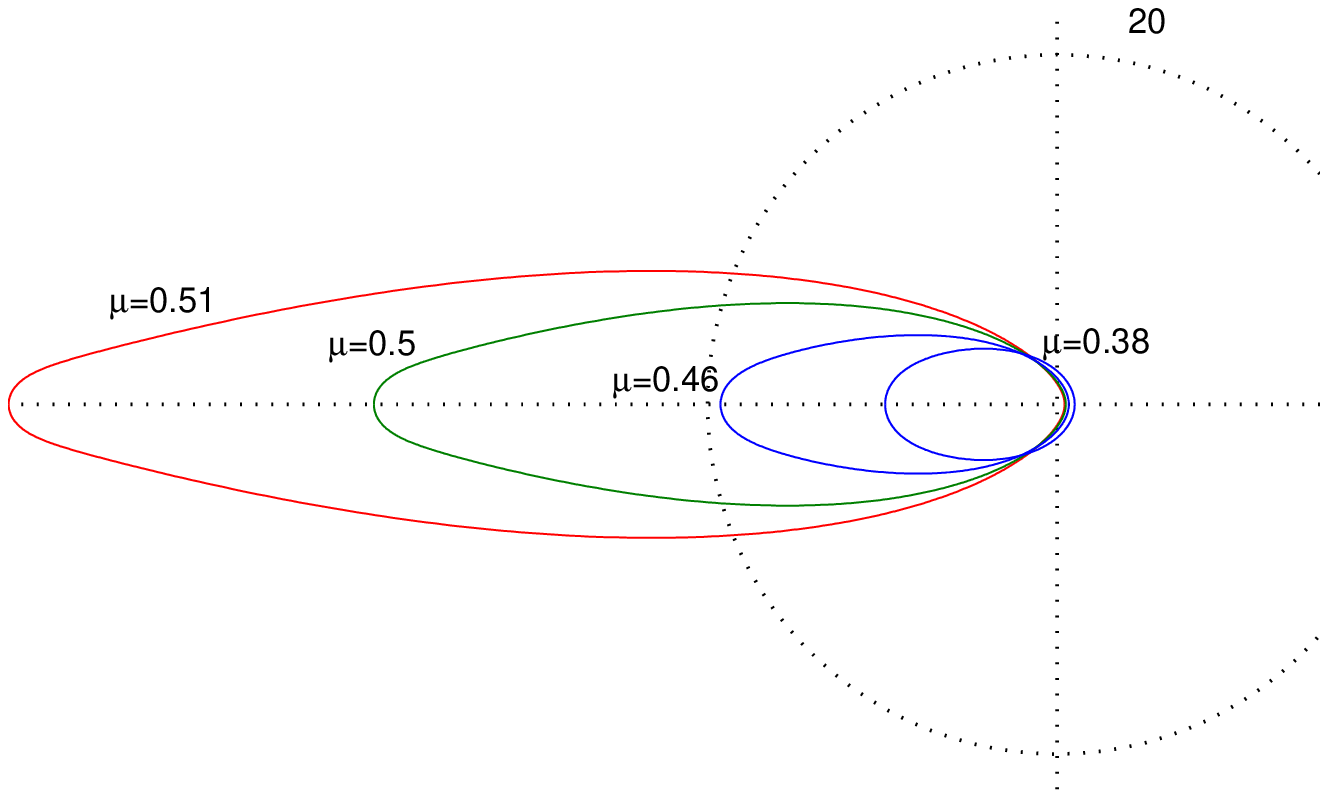}

{\bf Figure 10:} Experimental computation of the isoperimetric sets for $\alpha=1$.
The number $m$ is equal to the  value of $\mu$ of the corresponding {\bf non-convex} region. The green line corresponds to the case $\mu(A)=1/2$.  The red line ($m=0.51$) corresponds to a stationary smooth curve which is not isoperimetric (the best choice can be found among of the family of blue curves with $m=0.49$). For small values of $m$ the curves are asymptotically round in shape.

\centerline{\bf 3) Sub-Gaussian case, $1 < \alpha <2$.}

In this case there exist compact as well non-compact smooth stationary curves.
However, some of them are not isoperimetric (similarly to the red curve from the Figure 10).
It was verified numerically that there exist critical values $1< \alpha_0 < \alpha_1 \le 2$
such that the isoperimetric sets are compact for $1 < \alpha < \alpha_0$ ($\alpha_0 \approx 1.08$) and non-compact for
$\alpha_1 < \alpha$. Obviously, $\alpha_1 \le 2$. Some heuristic arguments demonstrate that $\alpha_1$ can be equal
to $1.5$. Indeed, we know that $u \sim a r^{2-\alpha}$. Applying formula for $\Delta_f$ with $u = ar^{2-\alpha}$
we get that the full rotation $\Delta_f < \pi$ for $\alpha> 1.5$.

{\bf Conclusion:}
\begin{itemize}
\item[1)] there exists $a_0 = a_0(\alpha)$ such that for $a = a_0$ the corresponding curve given by (\ref{u-a}) is the boundary of an isoperimetric set of measure $0.5$;
\item[2)] for $a < a_0$ the curve given by (\ref{u-a}) is not isoperimetric;
\item[3)] for $a > a_0$ the curve is isoperimetric and the measure of the part of the plane that does not contain the origin is less than $0.5$;
\item[4)] there exists $a_1 = a_1(\alpha)$ such that for every $a < a_1$ the curve given by (\ref{u-a}) is non-compact (this corresponds to the case $\lambda = 0$) and the full rotation is less than $\pi$
(see Figure 1);
\item[5)] for every $a > a_1$  there exists a unique $\lambda$ such that the isoperimetric curve
 is compact, closed and smooth (analogously to the case $\alpha = 1$);
\item[6)] for $1<\alpha< \alpha_0$ one has  $a_0>a_1$. Thus for $\alpha<\alpha_0$ all the isoperimetric curves are compact (exponential type);
\item[7)] for $\alpha_1>\alpha>\alpha_0$ the critical value $a_0$ is less than $a_1$. Thus for $\alpha_1>\alpha>\alpha_0$ there exist compact isoperimetric curves as well as non-compact isoperimetric curves.
\item[8)] for $\alpha>\alpha_1$ and for every $a>0$ the full rotation of corresponding curve given by (\ref{u-a}) is less then $\pi$. Thus every isoperimetric curve is non-compact (super-Gaussian type).
\end{itemize}

{\bf Figures 11-13:} Experimental computation of the isoperimetric sets for $\alpha=1.1; 1.2; 1.5$.
The number $m$ is equal to the  value of $\mu$ of the corresponding {\bf non-convex} region. The green line corresponds to the case $\mu(A)=1/2$.  The red line corresponds to a stationary smooth curve which is not isoperimetric.

\includegraphics[scale=0.7]{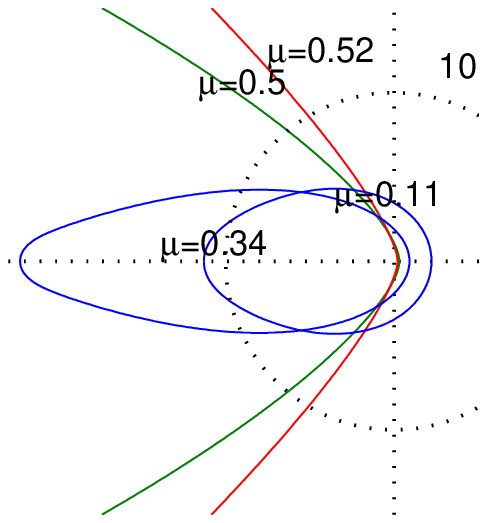}
\includegraphics[scale=0.7]{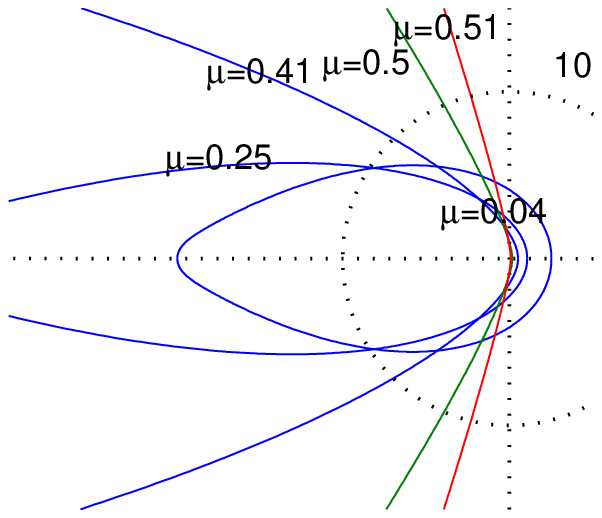}

\includegraphics[scale=0.7]{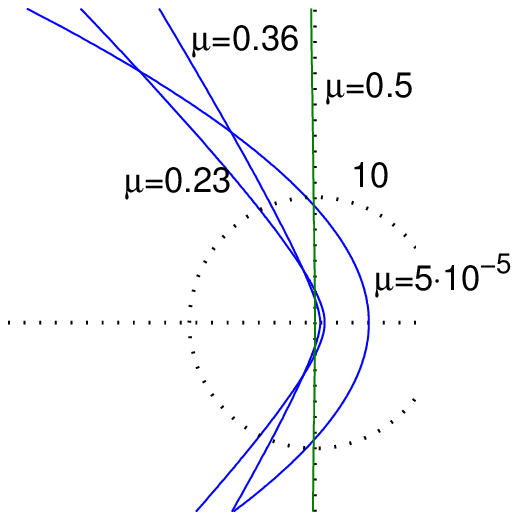}

\section{Log-convex measures}

In this section we investigate the so called log-convex measures,
i.e. measures of the form $\mu = e^{V} \ dx$, where $V$ is convex.
The measures of this type can be considered as natural analogs of
the negatively curved spaces in geometry.

Recall that the hyperbolic space $H^{d-1}$
serves as a model space for the negatively curved spaces. Solutions to the isoperimetric problem are given
by the metric balls. The hyperbolic plane  $H^2$
enjoys the following isoperimetric inequality:
\begin{equation}
\label{hd}
4 \pi \nu(A) -  K \nu^2(A) \le  [\nu^{+}(\partial A)]^2,
\end{equation}
where $\nu$ is the Riemannian volume on
$H^2$, $\nu^{+}(\partial A)$ is the length of the boundary $\partial A$, $\nu(A) < \infty$  and
$K<0$ is the constant Gauss curvature.

 Analyzing (\ref{hd}) and Borell's result \cite{Bor-iso2} one can conclude that a natural isoperimetric inequality
 for a log-convex measure has the form
 \begin{equation}
 \label{log-conv-iso}
  \mu(A)^{1-\frac{1}{d}} +  \mu(A) \psi(\mu(A)) \le C(d)
\mu^{+}(\partial A)
\end{equation}
for some increasing $\psi$. Here the first term in the left-hand
side is "responsible"  for small values of $\mu(A)$ and the second
one for large one's.

The following conjecture was suggested in  \cite{RCBM}.
\begin{conjecture}
\label{lcr-conj} Let $\mu = e^{v(r)} dx $ be a radially symmetric
measure with a convex smooth potential $v$ on $R^d$, $d \ge 2$. Then the
isoperimetric regions for $\mu$ are the balls about the
origin $B_R = \{x: |x| \le R \}$.
\end{conjecture}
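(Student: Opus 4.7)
My plan combines three ingredients already in the paper: the spherical symmetrization of Section~3 (with its $d\ge 3$ extension from \cite{MHH}), the stationary ODE of Section~4, and the real-analytic regularity of Theorem~\ref{morg}. First, by spherical symmetrization, any isoperimetric set $A$ is, up to a rigid rotation, either a centered ball $B_{R}$ or a set of revolution $A=\{x:\angle(x,e_{1})<f(|x|)\}$ for some Lipschitz $f:[0,\infty)\to[0,\pi]$; smoothness of $v$ together with Theorem~\ref{morg} then promotes $\partial A$ to a real-analytic hypersurface. The higher-dimensional analogue of Lemma~\ref{variat} (indicated in the Remark following it) forces $f$ to satisfy the stationary ODE expressing constancy of the generalized mean curvature $(d-1)H_{\mathrm{Eucl}}+\langle n,\nabla v\rangle$ on $\partial A$, with the sign in front of $\nabla v$ reversed from Section~4 since here $\mu=e^{+v}\,dx$. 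Centered balls satisfy this trivially with constant value $(d-1)/R+v'(R)$.

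For the core comparison, let $B_{R_{0}}$ be the centered ball with $\mu(B_{R_{0}})=\mu(A)$, and write $\omega_{d-1}(t)=\omega_{d-2}\int_{0}^{t}\sin^{d-2}\theta\,d\theta$ for the unit-sphere cap of geodesic radius $t$. Then
\begin{equation*}
\mu(A)=\int_{0}^{\infty}\omega_{d-1}(f(r))\,r^{d-1}e^{v(r)}\,dr,
\end{equation*}
\begin{equation*}
\mu^{+}(\partial A)=\omega_{d-2}\int_{0}^{\infty}(r\sin f(r))^{d-2}\sqrt{1+r^{2}(f'(r))^{2}}\,e^{v(r)}\,dr,
\end{equation*}
and $\mu^{+}(\partial B_{R_{0}})=\omega_{d-1}(\pi)\,R_{0}^{d-1}e^{v(R_{0})}$. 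Smoothness of $v(r)$ as a radial function at the origin forces $v'(0)=0$, so convexity gives $v'\ge 0$ and $r^{d-1}e^{v(r)}$ is strictly increasing on $(0,\infty)$; this is the single place where the convexity hypothesis is to enter decisively. The plan is to integrate by parts against $v'$ in the perimeter integral, use the stationary ODE to cancel the resulting second-order terms, and combine with $\sqrt{1+r^{2}(f')^{2}}\ge 1$ and a Jensen step in the transversal factor to arrive at $\mu^{+}(\partial A)\ge\mu^{+}(\partial B_{R_{0}})$, with equality forcing $f(r)\in\{0,\pi\}$ almost everywhere. Analyticity and connectedness of $\partial A$ then identify $A$ with $B_{R_{0}}$.

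The genuine obstacle is this last inequality. The transversal factor $(r\sin f)^{d-2}$ is maximal at $f=\pi/2$, not at $f\in\{0,\pi\}$, so a naive rearrangement toward the centered ball actually increases it; only the longitudinal penalty $\sqrt{1+r^{2}(f')^{2}}$, weighted by the fast-growing $e^{v(r)}$, can compensate. Convexity of $v$ must enter precisely here to force any stationary $f\not\equiv\mathrm{const}$ that ever leaves $\pi$ to do so close to the origin, so that analyticity can propagate rigidity outward. If the direct estimate proves elusive, my fallback is a second-variation argument modeled on Borell's treatment of $\mu=e^{r^{2}}\,dx$ in \cite{Bor-iso2}: assume a non-ball isoperimetric set exists and use the stationary ODE together with convexity of $v$ to construct a volume-preserving deformation of strictly negative second variation, contradicting stability. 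Converting convexity of $v$ into strict coercivity of this second variation is, I expect, the delicate point, and is plausibly why the conjecture remains open in the generality stated.
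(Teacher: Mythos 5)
The statement you are trying to prove is explicitly a \emph{conjecture} in the paper (Conjecture~\ref{lcr-conj}, quoted from \cite{RCBM}), and the paper does not prove it: the authors say only that they ``prove some particular cases of this conjecture and some related results.'' The partial results are Proposition~\ref{bigballs}, which shows that \emph{large} centered balls are isoperimetric when $v$ is strictly convex in a quantified sense, proved via the divergence theorem plus the rearrangement Lemma~\ref{rad-sym-lemma}, not via the stationary ODE; and Theorem~\ref{ball-set-est}, which shows by spherical optimal-transport arguments that $\mu^{+}(\partial A)\ge\frac{1}{\sqrt{1+\pi^2}}\mu^{+}(\partial B_r)$ whenever $\mu(A)=\mu(B_r)$. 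Neither uses the symmetrization-plus-stationary-ODE strategy you outline; in the paper, that ODE (Section~4) is used only as a tool for the numerical exploration of exponential power laws in Section~5, never to close an isoperimetric proof for log-convex densities.

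As for your proposal on its own terms: the reduction to circularly symmetric stationary competitors, the appeal to Theorem~\ref{morg} for analyticity, and the observation that centered balls are stationary are all consistent with Sections~3--4, but the decisive inequality $\mu^{+}(\partial A)\ge\mu^{+}(\partial B_{R_0})$ is never established. You describe an integrate-by-parts-and-Jensen plan, correctly note that the transversal factor $(r\sin f)^{d-2}$ is maximized at $f=\pi/2$ so a naive rearrangement toward $f\in\{0,\pi\}$ goes the \emph{wrong} way, and conclude that this ``is plausibly why the conjecture remains open.'' That candid admission \emph{is} the gap; the core estimate is missing, not merely unpolished. Two further points: the paper's working version of the conjecture deliberately strengthens ``$v$ convex smooth'' to ``$r\mapsto\log\rho(r)$ increasing convex'' precisely because the conclusion fails for $v\equiv 0$ (Lebesgue: every ball is isoperimetric, not only centered ones) and for the motivating examples $v=r^{\alpha}$, $1\le\alpha<2$, one has $v'(0)>0$ rather than $0$ — so your derivation of $v'\ge 0$ from smoothness does not apply to the case the paper actually cares about. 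And your fallback second-variation argument \`{a} la Borell would need to rule out non-ball stationary sets at \emph{all} volumes; the paper's Proposition~\ref{bigballs} only delivers this above a volume threshold depending on the modulus of convexity, which is a sign that such uniform stability is not cheaply available.
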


Here we prove some particular cases of this conjecture and some related results.

\subsection{Divergence theorem and radially symmetric measures}

We start with an elementary lemma based on the divergence theorem. This
lemma allows to describe asymptotically the isoperimetric function
for radially symmetric measures. We deal below with open sets with Lipschitz boundaries.

\begin{lemma}
\label{ibp-estim} For $\mu=e^{V} dx$ with a sufficiently regular $V$ one has
$$
\mu^{+}(\partial A)
\ge
\int_{A} \Bigl( \mbox{\rm div} \frac{\nabla V}{ |\nabla V|}
+ |\nabla V|
\Bigr) d\mu.
$$
In particular, if $V = v(r)$, then
$$
\mu^{+}(\partial A)
\ge
\int_{A} \Bigl(  \frac{d-1}{ r}
+ v'(r)
\Bigr) d\mu.
$$
If $V$ has convex sublevel sets, then
$$
\mu^{+}(\partial A)
\ge
\int_{A}   |\nabla V| d\mu.
$$
\end{lemma}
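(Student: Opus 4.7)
The plan is to apply the divergence theorem to the vector field
$$
X = \frac{\nabla V}{|\nabla V|}\, e^{V}
$$
on the set $A$. Computing the divergence, using $\nabla e^V = e^V \nabla V$, gives
$$
\mathrm{div}\,X = \Bigl(\mathrm{div}\,\frac{\nabla V}{|\nabla V|}\Bigr) e^V + \frac{\nabla V}{|\nabla V|}\cdot \nabla V \, e^V = \Bigl(\mathrm{div}\,\frac{\nabla V}{|\nabla V|} + |\nabla V|\Bigr) e^V,
$$
so integrating over $A$ yields exactly the integrand on the right-hand side of the claim.

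On the boundary side, the divergence theorem gives $\int_A \mathrm{div}\,X\,dx = \int_{\partial A}\langle X,n\rangle\,d\mathcal{H}^{d-1}$. Since $\bigl|\tfrac{\nabla V}{|\nabla V|}\bigr|=1$, Cauchy--Schwarz yields $\langle X,n\rangle \le e^V$ pointwise on $\partial A$, and the right-hand boundary integral is at most $\int_{\partial A} e^V d\mathcal{H}^{d-1} = \mu^{+}(\partial A)$. Combining both computations gives the first inequality. The regularity caveat ``sufficiently regular'' takes care of the set where $\nabla V=0$ or $V$ is non-smooth; on the complement the calculation is classical, and one can approximate if needed.

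For the second inequality, specialize to $V=v(r)$. Then $\nabla V = v'(r)\,\hat r$ with $\hat r = x/r$, so $|\nabla V| = |v'(r)|$ and, assuming $v$ is nondecreasing so that $v'\ge 0$ (the natural regime for log-convex radially symmetric measures), one has $\tfrac{\nabla V}{|\nabla V|} = \hat r$. Using the standard identity $\mathrm{div}\,\hat r = \tfrac{d-1}{r}$ in $\mathbb{R}^d$, the general inequality reduces immediately to the stated radial version.

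For the third inequality, the key observation is that the quantity $\mathrm{div}\,\tfrac{\nabla V}{|\nabla V|}$ is precisely $(d-1)H$, where $H$ is the mean curvature of the level set $\{V=V(x)\}$ with respect to the outward normal $\tfrac{\nabla V}{|\nabla V|}$. Since $\nabla V$ points away from the convex sublevel set $\{V\le V(x)\}$, convexity of all sublevel sets is equivalent to $H\ge 0$ everywhere, and hence $\mathrm{div}\,\tfrac{\nabla V}{|\nabla V|} \ge 0$. Dropping this nonnegative term in the first inequality gives the third. The only real obstacle is the handling of the critical set $\{\nabla V = 0\}$ and possible non-smoothness of $V$, which is absorbed into the regularity hypothesis.
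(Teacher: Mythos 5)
Your proof is correct and is exactly the paper's argument spelled out in full detail: the paper's ``trivial inequality'' $\mu^{+}(\partial A)\ge\int_{\partial A}\langle n_A,\tfrac{\nabla V}{|\nabla V|}\rangle e^V\,d\mathcal{H}^{d-1}$ is your Cauchy--Schwarz step, and ``integration by parts'' is your application of the divergence theorem to $X=\tfrac{\nabla V}{|\nabla V|}e^V$. The observations you add (the radial identity $\mathrm{div}\,\hat r=\tfrac{d-1}{r}$ and the identification of $\mathrm{div}\,\tfrac{\nabla V}{|\nabla V|}$ with the mean curvature of the level sets, nonnegative when sublevel sets are convex) are precisely the routine specializations the paper leaves implicit.
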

\begin{proof}
The result follows from the trivial inequality
$$
\mu^{+}(\partial A)
\ge
\int_{\partial A} \Bigl\langle n_A, \frac{\nabla V}{|\nabla V|} \Bigr\rangle e^V d\mathcal{H}^{d-1}
$$
and integration by parts.
\end{proof}

\begin{example}
\label{exp-meas}
Measures $\nu_2 = e^{r} dx$ and $\nu_1 = e^{\sum_{i=1}^{d} |x_i|} dx$
satisfy the following Cheeger-type  inequalities:
$$
\nu^{+}_2(\partial A) \ge \nu_2(A),
$$
$$
 \nu^{+}_1(\partial A)\ge \sqrt{d} \ \nu_1(A).
$$
\end{example}

Let us apply this result in the radially symmetric case.

\begin{lemma}
\label{rad-sym-lemma}
Let $\mu$ be any Borel positive measure on $\R^d$ and $F : \R^d \to \R^{+}$
be any function such that the corresponding distribution function $\mu_{F}(t) =
\mu (C_t)$, where $C_t := \{x: F(x) \le t\}$, is continuous and strictly increasing
from $0$ to $\mu(\R^d)$. Then for every Borel set $A$ with finite measure one has
$$
\int_{A} F d \mu \ge \int_{C_t} F d \mu,
$$
where $t$ is chosen in such a way that $\mu(A) = \mu(C_t)$.
\end{lemma}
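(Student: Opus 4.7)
The plan is to prove this by the standard \emph{bathtub principle} / layer-cake argument: the set $C_t$ collects precisely the points where $F$ is smallest, so among all sets of $\mu$-measure equal to $\mu(C_t)$, the integral $\int F\, d\mu$ is minimized on $C_t$. The continuity and strict monotonicity of $\mu_F$ guarantees that a unique $t$ with $\mu(C_t) = \mu(A)$ exists, so the statement is well-posed.

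The key algebraic identity I would use is the splitting
\begin{equation*}
\int_{A} F\, d\mu - \int_{C_t} F\, d\mu
= \int_{A\setminus C_t} F\, d\mu - \int_{C_t\setminus A} F\, d\mu,
\end{equation*}
obtained by subtracting the contribution of $A\cap C_t$ from both sides.

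Next I would exploit the definition of $C_t$: on $A\setminus C_t$ one has $F(x) > t$, so $\int_{A\setminus C_t} F\, d\mu \ge t\,\mu(A\setminus C_t)$; on $C_t\setminus A$ one has $F(x)\le t$, so $\int_{C_t\setminus A} F\, d\mu \le t\,\mu(C_t\setminus A)$. The measure-matching assumption $\mu(A) = \mu(C_t)$ together with the decomposition $\mu(A) = \mu(A\cap C_t) + \mu(A\setminus C_t)$ and similarly for $C_t$ forces $\mu(A\setminus C_t) = \mu(C_t\setminus A)$. Substituting these three facts into the identity above yields $\int_A F\, d\mu - \int_{C_t} F\, d\mu \ge t\,\mu(A\setminus C_t) - t\,\mu(C_t\setminus A) = 0$.

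There is no serious obstacle here; the only mild subtlety is handling the case where $\mu(A\setminus C_t)=\infty$ or $F$ fails to be integrable on $A$. In the former case, since $F > t$ on $A\setminus C_t$ (and we may assume $t>0$, else $C_t$ is essentially empty and the inequality is trivial), $\int_A F\, d\mu = +\infty$ and the inequality is automatic; in the latter, the inequality reads $+\infty \ge \int_{C_t} F\, d\mu$, which again is trivial. Thus the only case requiring the calculation above is when both sides are finite, where the argument goes through verbatim.
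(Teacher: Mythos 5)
Your proof is correct and follows essentially the same route as the paper: both split at $A\cap C_t$, use that $F>t$ off $C_t$ and $F\le t$ on $C_t$, and invoke $\mu(A)=\mu(C_t)$ to equate $\mu(A\setminus C_t)$ with $\mu(C_t\setminus A)$. The paper arranges this additively, $\int_A F\,d\mu \ge \int_{A\cap C_t}F\,d\mu + t\,\mu(A\setminus C_t) = \int_{A\cap C_t}F\,d\mu + t\,\mu(C_t\setminus A) \ge \int_{C_t}F\,d\mu$, which, since $F\ge 0$, sidesteps the finiteness caveats you address at the end of your write-up.
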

\begin{proof}
First we note that the existence of $t$ satisfying $\mu(A) = \mu(C_t)$ follows from the assumptions.
Next
\begin{align*}
\int_{A} F(x) d\mu
& =
\int_{A \cap C_t} F(x) d\mu
+ \int_{A \cap C^c_t} F(x) d\mu
\\&
\ge \int_{A \cap C_t} F(x) d\mu + \int_{A \cap C^c_t} t d\mu
\\& = \int_{A \cap C_t} F(x) d\mu
+ t \mu(A^c \cap C_t) \ge \int_{C_t} F(x) d\mu.
\end{align*}
In the proof we use that $\mu(A^c \cap C_t) = \mu(A \cap C^c_t)$ (this  is because $\mu(A) = \mu(C_t)$).
\end{proof}

\begin{remark}
Applying Lemma \ref{rad-sym-lemma} and  Lemma \ref{ibp-estim} to a radially symmetric measure $\mu = \exp{v(r)} \ dx$
and the function   $F =  v'(r)$ with increasing $v'$
one obtains the following  estimate of the isoperimetric function:
$$
\mathcal{I}_{\mu}(t) \ge \int_{B_{r(t)}} v'(r) \ d\mu,
$$
where $r(t)$ satisfies $\mu(B_{r(t)})=t$.
Note that the term $\frac{d-1}{r}$ is negligible for any increasing $v'$ and large values of $t$. Thus the obtained estimate
is asymptotically sharp for large sets. See also Proposition \ref{bigballs} and Theorem \ref{ball-set-est}
\end{remark}

\begin{example}
\label{exp-alph}
 Let $\mu = e^{r^{\alpha}} dx$, $\alpha>1$.
 For large
values of $\mu(A)$ (say $\mu(A) \ge 1$) one has
$$
\mu^{+}(\partial A) \ge C \mu(A) \log ^{1-\frac{1}{\alpha}} \mu(A).
$$

Indeed, apply Lemma \ref{rad-sym-lemma} to $F = r^{\alpha-1}$.
We get $\mu^{+} (\partial A) \ge \alpha \int_A r^{\alpha-1} \ d\mu \ge \alpha \int_{B_{r(\mu(A))}} r^{\alpha-1} \ d\mu $.
For large $r$ one has
$$
\mu(B_r) \sim c_1 r^{d} e^{r^{\alpha}}, \ \int r^{\alpha-1} \ d \mu \sim  c_2 r^{d+\alpha-1} e^{r^{\alpha}}
$$
and we easily get the desired asymptotics.

Analogously, for  $0 < \tau \le d$
$$
\mu_{\tau} = \frac{dx}{(1-r^2)^{1+ \tau}}, ~ x \in B_1,
$$
satisfies
$$
 \mu^{+}_{\tau}(\partial A)
\ge
C \mu_{\tau}(A)^{1+\frac{1}{\tau}}.
$$
 For $\tau =0$  one has
 $$
 \mu^{+}_0(\partial A) \ge e^{C \mu_0(A)}.
$$
\end{example}

Moreover, applying some refinements of the arguments from above, we show that {\bf for the strictly
log-convex radially symmetric measures the large balls are isoperimetric sets}.

\begin{proposition}
\label{bigballs}
Let $\mu = e^{v(r)} \ dx$
be a radially symmetric measure on $\R^d$ with increasing  $v$. Assume that there exists a smooth function $f : [0,\infty) \to \R$,
satisfying the following assumptions:
\begin{itemize}
\item[1)]
$
|f| \le 1
$
\item[2)]
$
f(r_0)=1
$
\item[3)]
function
$
F(r) = f'(r) + f(r) \Bigl( v'(r) + \frac{d-1}{r} \Bigr)
$
is increasing on $[0,\infty)$.
\end{itemize}
Then among the sets satisfying $\mu(A) = \mu(B_{r_0})$ the ball $B_{r_0}$
has the minimal surface measure $\mu^{+}$.
\end{proposition}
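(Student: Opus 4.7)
The plan is to bound $\mu^{+}(\partial A)$ from below by a radial integral $\int_A F(r)\,d\mu$ via the divergence theorem applied to a well-chosen radial vector field, and then to minimize that integral among sets of prescribed $\mu$-measure using the rearrangement inequality of Lemma \ref{rad-sym-lemma}. This mirrors the strategy already used in Lemma \ref{ibp-estim} and Example \ref{exp-alph}, but with the precise radial profile $f$ that makes the ball $B_{r_0}$ saturate the bound.

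The vector field I would use is $X(x) = f(|x|)\,x/|x|$. Its weighted divergence is
$$e^{-v(r)}\,\mathrm{div}\bigl(e^{v(r)}X\bigr) = f'(r) + f(r)\frac{d-1}{r} + f(r)v'(r) = F(r).$$
Applying the divergence theorem on $A$ (with a small cutoff near the origin when $f(0)\ne 0$, and letting the cutoff radius tend to zero --- harmless since $|X|\le 1$ and $\mu^{+}(\partial B_\varepsilon)\to 0$ in dimension $d\ge 2$) yields
$$\int_A F(r)\,d\mu = \int_{\partial A} f(r)\,\Bigl\langle \frac{x}{|x|},n_A\Bigr\rangle\,d\mu^{+} \le \mu^{+}(\partial A),$$
where the inequality uses the pointwise bounds $|f|\le 1$ and $|\langle x/|x|,n_A\rangle|\le 1$.

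Next, evaluating the same identity at $A=B_{r_0}$, where $n_A=x/|x|$ and, by hypothesis, $f(r_0)=1$, turns it into an equality:
$$\mu^{+}(\partial B_{r_0}) = \int_{B_{r_0}} F(r)\,d\mu.$$
Since $F$ is increasing in $r$, the sublevel sets of $x\mapsto F(|x|)$ on $\mathbb{R}^d$ are exactly the balls $B_R$. Applying Lemma \ref{rad-sym-lemma} to this function and to any $A$ with $\mu(A)=\mu(B_{r_0})$ gives $\int_A F\,d\mu \ge \int_{B_{r_0}} F\,d\mu$, and chaining with the previous inequality produces the desired $\mu^{+}(\partial A)\ge \mu^{+}(\partial B_{r_0})$.

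The substantive creative step is the choice of $X$ and the observation that its $\mu$-divergence is exactly the monotone function $F$ appearing in the hypotheses; once this is in place the argument is a clean two-line combination of the divergence theorem with Lemma \ref{rad-sym-lemma}. The only mild technicalities are (i) justifying the divergence identity when $X$ fails to be continuous at the origin, done by cutting out $B_\varepsilon$ and passing to the limit, and (ii) that Lemma \ref{rad-sym-lemma} as stated requires a continuous strictly increasing distribution function, while our $F$ is only non-strictly increasing; this is handled by perturbing $F$ to $F(r)+\delta r$ and sending $\delta\downarrow 0$, using that $\mu$ has a positive density.
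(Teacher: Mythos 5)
Your proposal is correct and follows essentially the same route as the paper: the paper likewise introduces the radial vector field $\omega(x)=f(r)\,x/r$, integrates by parts to obtain $\mu^{+}(\partial A)\ge\int_A F(r)\,d\mu$, invokes Lemma \ref{rad-sym-lemma} together with the monotonicity of $F$ to compare with $\int_{B_{r_0}}F\,d\mu$, and observes that the ball makes the chain of inequalities tight. Your extra remarks about the cutoff at the origin and the non-strict monotonicity of $F$ are careful additions not made explicit in the paper, though the latter can also be dispensed with by applying the inequality in the proof of Lemma \ref{rad-sym-lemma} directly to $C_t=B_{r_0}$ with $t=F(r_0)$, since only the sign conditions $F\le t$ on $B_{r_0}$ and $F\ge t$ outside are actually used.
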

\begin{proof}
Set: $\omega(x) = f(r) \cdot \frac{x}{r} $. Take a set $A$ with $\mu(A)=\mu(B_{r_0})$.
Without loss of generality we assume that  $\partial A$ is smooth and
denote by $n_A$ the outer unit normal to $\partial A$. Applying integration-by-parts we get
\begin{align*}
\mu^{+}(\partial A)
&
\ge
\int_{\partial A} \langle \omega, n_A \rangle e^{v(r)} \ dx
=
\int_{A} \mbox{div} \bigl( \omega \cdot   e^{v(r)}\bigr) \ dx
=
\\&
= \int_{A}  \Bigl[ f'(r) + f(r) \Bigl( v'(r) + \frac{d-1}{r} \Bigr)  \Bigr]  e^{v(r)}\ dx
= \int_{A}  F(r)  e^{v(r)}\ dx.
\end{align*}
Using that $F$ is increasing in $r$, we get by Lemma \ref{rad-sym-lemma} that
$ \int_{A}  F(r)  e^{v(r)} \ dx \ge \int_{B_{r_0}}  F(r)  e^{v(r)}\ dx$.
Thus
$$
\mu^{+}(\partial A)
\ge
\int_{B_{r_0}}  F(r)  e^{v(r)}\ dx.
$$
It remains to note that for $A=B_{r_0}$ we have equalities in all the computations above. Hence
$$
\mu^{+}(\partial A)
\ge
\mu^{+}(\partial B_{r_0}).
$$
\end{proof}

\begin{corollary}
Assume that $v'' \ge 1$.
Let $r_0 \ge \sqrt{d+2}$ .
Then among all the sets satisfying $\mu(A)=\mu(B_{r_0})$
  the ball $B_{r_0}$ has the minimal surface measure.
\end{corollary}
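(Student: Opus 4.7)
The plan is to apply Proposition~\ref{bigballs} with the smooth function
$$f(r) = \frac{2 r r_0}{r^2 + r_0^2}.$$
Conditions 1) and 2) are immediate: $|f| \le 1$ follows from AM--GM ($r^2 + r_0^2 \ge 2 r r_0$), and $f(r_0) = 2 r_0^2/(2 r_0^2) = 1$. Thus the entire content of the corollary lies in verifying condition 3): that $F(r) = f'(r) + f(r)\bigl(v'(r) + (d-1)/r\bigr)$ is non-decreasing on $[0,\infty)$.

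To this end, I set $\Phi(r) := d r_0^2 + (d-2) r^2 + r(r^2 + r_0^2) v'(r)$ so that $F(r) = 2 r_0 \Phi(r)/(r^2 + r_0^2)^2$, and the monotonicity of $F$ reduces to the inequality $\Phi'(r)(r^2 + r_0^2) - 4 r \Phi(r) \ge 0$. Since $v$ is smooth and radial, $v'(0) = 0$, and combined with $v'' \ge 1$ this yields $v'(r) \ge r$. Writing $w(r) := v'(r) - r$ so that $w(0) = 0$ and $w, w' \ge 0$, expansion of the bracket produces
$$2r\bigl[r^2(r_0^2 - d + 2) + r_0^2(r_0^2 - d - 2)\bigr] + (r^2 + r_0^2)\bigl[(r_0^2 - r^2) w(r) + r(r^2 + r_0^2) w'(r)\bigr].$$
The first (model) term is non-negative on $[0,\infty)$ precisely when $r_0^2 \ge d + 2$, which is our hypothesis; this is exactly where the threshold $\sqrt{d+2}$ enters. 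The correction term is manifestly non-negative on $[0, r_0]$, since both factors $(r_0^2 - r^2)$ and $w(r)$ are non-negative there.

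The main obstacle is controlling the correction on $(r_0, \infty)$, where $(r_0^2 - r^2) w(r) \le 0$ competes against the favorable term $r(r^2 + r_0^2) w'(r)$. The idea is to exploit $w(r) = \int_0^r w'(s)\, ds$ together with $w' \ge 0$ to dominate the unfavorable piece by the favorable one, using an integration-by-parts or Hardy-type inequality. If a direct manipulation does not close for arbitrary $v$ with $v'' \ge 1$, I would refine the choice of $f$ by modifying its tail behavior (multiplying by a factor which cuts off the bad sign region), noting that the model case $v(r) = r^2/2$ already saturates the $r_0^2 \ge d+2$ threshold exactly, so only the correction from the excess $w$ needs to be absorbed.
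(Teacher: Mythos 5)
There is a genuine gap, and the student acknowledges it without closing it. Your calculation of the model term and correction term is correct, and conditions 1), 2) for $f(r)=2rr_0/(r^2+r_0^2)$ hold, but condition 3) of Proposition~\ref{bigballs} actually \emph{fails} for this $f$: since $f$ decays to $0$ at infinity and is strictly decreasing past $r_0$, the monotonicity of $F$ cannot hold for all admissible $v$. To see this cleanly, note that if $F_0$ is the value of $F$ for the model potential $v=r^2/2$, then for a general $v$ with $v''\ge 1$, writing $w=v'-r$ one has
$$
F'(r) - F_0'(r) = f'(r)\,w(r) + f(r)\,w'(r),
$$
and on $(r_0,\infty)$ the factor $f'(r)<0$ multiplies a quantity $w(r)$ that can be made arbitrarily large while keeping $w'(r)=0$. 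Concretely, take $w'$ to be a smooth non-negative bump supported in $(r_0, R)$ with $\int w' = B$; then for $r>R$ one has $w=B$, $w'=0$, and the total expression $\Phi'(r)(r^2+r_0^2)-4r\Phi(r)$ equals $2(r_0^2-d+2)r^3 + 2r_0^2(r_0^2-d-2)r + B(r_0^4-r^4)$, which tends to $-\infty$ as $r\to\infty$. So the proposed $f$ is not admissible, and the Hardy-type inequality you hope for does not exist: the unfavorable piece is not controlled by the favorable one once $w'$ vanishes.

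Your closing remark contains the seed of the fix, but it is not executed, and it is the entire point of the threshold $r_0\ge\sqrt{d+2}$: the test function must satisfy $f'\ge 0$ on all of $[0,\infty)$, which forces $f$ to stabilize at its maximum rather than decay. Since your $f$ happens to have $f'(r_0)=0$, the natural repair is to replace it by $\tilde f(r)=f(r)$ on $[0,r_0]$ and $\tilde f(r)=1$ on $[r_0,\infty)$ (a $C^1$ gluing). On $(r_0,\infty)$ this gives $F'=v''-(d-1)/r^2\ge 1-(d-1)/(d+2)>0$, and on $[0,r_0]$ your model/correction decomposition already shows $F'\ge 0$. With this cap the argument closes and is essentially the same strategy as the paper's, which uses the piecewise choice $f(r)=\tfrac{3}{2\sqrt{d+2}}\bigl(r-\tfrac{r^3}{3(d+2)}\bigr)$ for $r\le\sqrt{d+2}$ and $f\equiv 1$ afterwards --- a choice that is additionally independent of $r_0$, whereas your $f$ (even after capping) depends on $r_0$. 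As submitted, however, the proposal leaves the decisive step open and the chosen $f$ does not satisfy the hypothesis it is meant to verify.
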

\begin{proof}
Without loss of generality we assume that $v(0)=0$.
Set:
 $$f(r) =\left\{
\begin{array}{rcl}
\frac{3}{2\sqrt{d+2}} \bigl( r - \frac{r^3}{3(d+2)}\bigr) ,  \ \ \ \ \  r \le \sqrt{d+2} \\
  1  \ \ \ \ \ r > \sqrt{d+2}.
\end{array}
\right.
 $$

Assume first that $v=r^2/2$.
 Note that $f$ is continuously differentiable and increasing. One has
 $$
 F(r) =
 \left\{
\begin{array}{rcl}
\frac{3}{2\sqrt{d+2}} \bigl(
d + \frac{2r^2}{3} - \frac{r^4}{3(d+2)} \bigr),  \ \ \ \ \  r \le \sqrt{d+2} \\
  r + \frac{d-1}{r}  \ \ \ \ \ r > \sqrt{d+2},
\end{array}
\right.
 $$

 $$
  F'(r) =
 \left\{
\begin{array}{rcl}
\frac{2r}{\sqrt{d+2}} \bigl(
1-\frac{r^2}{d+2} \bigr),  \ \ \ \ \  r \le \sqrt{d+2} \\
  1 - \frac{d-1}{r^2}  \ \ \ \ \ r > \sqrt{d+2}.
\end{array}
\right.
 $$
  Clearly, $f$ and $F$ satisfy assumptions of the Proposition \ref{bigballs} for every $r_0 \ge \sqrt{d+2}$.

  It is easy to check that for $v$ satisfying $v''>1$ (hence $v' \ge r$)
  one has
  $
    F'(r) \ge
\frac{2r}{\sqrt{d+2}} \bigl(
1-\frac{r^2}{d+2} \bigr)
  $
  for $r \le \sqrt{d+2}$
  and
  $  F'(r) \ge  1 - \frac{d-1}{r^2}$
  for $r \ge \sqrt{d+2}$.
  The proof is complete.
 \end{proof}

\begin{remark}
One can construct more examples using Proposition \ref{bigballs}.
It is applicable under assumption of certain  strict convexity of $v$.
It was pointed out to the author by Frank Morgan that  the arguments of
Proposition \ref{bigballs} imply the following results:
\begin{itemize}
\item[1)] If $v' = r^a$ with $a>0$, then the balls $B_r$ are minimizers if
$r>r_0$, where  $r_0$ satisfies
$r_0 = \sqrt[a+1]{\frac{d+2}{a}}$.
\item[2)]
In $\R^{d}$ with the Riemannian metric
$$
dr^2 + (e^{u(r)})^2 d\theta^2
$$
and density $e^{v(r)}$ (with respect to the Riemannian volume) satisfying
$$
(d-1)u'' + v'' + \frac{d-1}{r^2} \ge 1
$$
the balls $B_r$ with $r \ge \sqrt{d+2}$ are minimizers.
\end{itemize}
\end{remark}

\subsection{An estimate of the isoperimetric function for radially symmetric log-convex measures}

The main aim of this subsection is the following theorem.

\begin{theorem}
\label{ball-set-est}
Let $\mu=e^{v(r)} dx$ be a measure with a convex increasing potential $v$. Then
for every Borel set $A$ the
following inequality holds
$$
\mu^{+}(\partial A) \ge \frac{1}{\sqrt{1 + \pi^2}} \ \mu^{+}(\partial B_r),
$$
provided $\mu(A)=\mu(B_r)$.
\end{theorem}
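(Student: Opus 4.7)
The plan is to implement the mass-transport idea suggested in the introduction: I would construct a measure-preserving map in polar coordinates from the ball $B_{R}$ onto $A$, and extract the constant $\sqrt{1+\pi^{2}}$ from the universal bound $|\theta|\le\pi$ on the angular variable. First I would reduce to a symmetric $A$. By Proposition \ref{symmetry} (and Corollary \ref{d2sym} in the plane), spherical symmetrization about a ray through the origin preserves $\mu(A)$ and does not enlarge $\mu^{+}(\partial A)$, so one may assume
$$A=\{(r,\xi):\mathrm{dist}_{S^{d-1}}(\xi,\xi_{0})<f(r)\},\qquad f\colon [0,\infty)\to[0,\pi],$$
for a fixed pole $\xi_{0}\in S^{d-1}$. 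The spherical extent of $A$ is now bounded by $\pi$ at every radius, and this is the sole place the constant $\pi$ enters the final estimate.

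Next I would set up a polar Knothe-type transport $T\colon B_{R}\to A$ of the form $T(r,\xi)=(\sigma(r),\Phi_{\sigma(r)}(\xi))$, where $\sigma$ matches the $r$-marginals of $\mu|_{B_{R}}$ and $\mu|_{A}$ (so $\sigma(r)=N^{-1}(\pi M(r))$ for the obvious cumulative mass functions $M$ and $N$), and $\Phi_{s}$ is the radial contraction on $S^{d-1}$ centred at $\xi_{0}$ sending the full sphere onto the spherical cap of radius $f(s)$. Comparing marginals using $f\le\pi$ forces $\sigma(r)\ge r$; together with monotonicity of $v$ this gives $v(\sigma(r))\ge v(r)$ and $\det DT=e^{v(r)-v(\sigma)}\le 1$. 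In a pair of orthonormal polar frames, $DT$ is block-triangular with radial diagonal $\sigma'(r)$, tangential diagonal $\sigma(r)f(\sigma(r))/(\pi r)$, and one off-diagonal term proportional to the spherical angle to $\xi_{0}$ times $f'(\sigma)\sigma'(r)$. Using $|\theta|\le\pi$, the Pythagorean inequality $1+\pi y\le\sqrt{1+\pi^{2}}\sqrt{1+y^{2}}$ applied to the ratio of off-diagonal to diagonal contributions yields the crucial estimate
$$\bigl\|(DT)^{-1}\!\mid_{\mathrm{tang}}\bigr\|\;\le\;\sqrt{1+\pi^{2}}\qquad\text{a.e.\ on }\partial A.$$
Lemma \ref{rad-sym-lemma} (applied to the increasing radial function $v'$) can be invoked to handle the regions where $f\in\{0,\pi\}$ on which the transport degenerates, by a continuity/approximation argument and an appeal to Lemma \ref{ibp-estim} which already controls $\mu^{+}(\partial A)$ from below by $\int_{A}v'\,d\mu$.

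Finally, applying the area formula for $T^{-1}\colon\partial A\to\partial B_{R}$ away from the degenerate set, together with $e^{v(r\circ T^{-1}(y))}\le e^{v(\sigma(y))}$ on $\partial A$, gives
$$\mu^{+}(\partial B_{R}) = \int_{\partial B_{R}}e^{v(R)}\,d\mathcal H^{d-1}\;\le\;\sqrt{1+\pi^{2}}\int_{\partial A}e^{v(\sigma)}\,d\mathcal H^{d-1}\;=\;\sqrt{1+\pi^{2}}\,\mu^{+}(\partial A),$$
which is the statement. The hardest part of the argument is the operator-norm estimate of the second paragraph: the off-diagonal term produced by the angular contraction must be absorbed into the tangential column exactly using $|\theta|\le\pi$ and $\det DT\le 1$, and the convexity of $v$ is essential in balancing the radial stretch $\sigma'(r)$ against the tangential contraction $f(\sigma)/\pi$ so that the Pythagorean inequality gives exactly $\sqrt{1+\pi^{2}}$ and nothing worse.
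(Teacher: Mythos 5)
Your broad strategy is the same as the paper's: after spherical symmetrization, set up a polar (Knothe-type) transport between $B_R$ and $A$, split the transport into a radial part and an angular part, use the universal bound $\pi$ on the angular displacement on $S^{d-1}$, and extract $\sqrt{1+\pi^2}$ from a Cauchy--Schwarz inequality combining radial and tangential contributions. The ingredients, including the use of the monotonicity and convexity of $v$ to control the radial stretch, match the paper's in spirit.

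However, the final step is wrong. You cannot apply the area formula to a purported map $T^{-1}\colon\partial A\to\partial B_R$, because the transport $T(r,\xi)=(\sigma(r),\Phi_{\sigma(r)}(\xi))$ does \emph{not} send $\partial B_R$ onto $\partial A$. After symmetrization, $\partial A$ consists of the outer spherical cap at radius $\sigma(R)$ together with the lateral wall $\{(r,\xi):\mathrm{dist}_{S^{d-1}}(\xi,\xi_0)=f(r)\}$; only the cap is the image of $\partial B_R$, while the lateral wall is the image of a seam interior to $B_R$ (in $d=2$, the ray opposite to the symmetry axis), which has positive $\mathcal{H}^{d-1}$-measure and nonzero $\mu^{+}$-measure contribution. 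Consequently, pulling back $\mathcal H^{d-1}\!\llcorner\partial B_R$ by $T^{-1}$ does not produce $\mathcal H^{d-1}\!\llcorner\partial A$ (even up to the factor $\sqrt{1+\pi^2}$), and the displayed chain of inequalities does not follow. The paper sidesteps precisely this obstruction by running the transport the other way (from $A$ to $B_{r_0}$) and using \emph{integration by parts on $A$}: the change of variables, $\ln\det\le\mathrm{Tr}-d$, the divergence theorem in $r$ and the spherical integration by parts on each $\partial B_r$ turn an interior integral over $A$ into boundary flux terms $\int_{\partial A}g\langle n_A,x/r\rangle\rho\,d\mathcal H^{d-1}+\int_{\partial A}\langle Pr_{TS^{d-1}}n_A,\nabla_{S^{d-1}}\varphi\rangle g\rho\,d\mathcal H^{d-1}$. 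Since $|n_A|=1$ decomposes into radial and tangential parts, Cauchy--Schwarz with $|\nabla_{S^{d-1}}\varphi|\le\pi$ and $g\le r_0$ bounds this by $r_0\sqrt{1+\pi^2}\,\mu^{+}(\partial A)$; the left-hand side is then identified with $r_0\mu^{+}(\partial B_{r_0})$ using $g(r)\le r$ and the monotonicity/convexity of $v$. This flux argument makes no claim about a bijection of boundaries, which is exactly what your area-formula step needs and lacks. (Also, your appeal to Lemma~\ref{rad-sym-lemma} to cure degeneracies where $f\in\{0,\pi\}$ is unjustified; that lemma is a rearrangement inequality for $\int_A F\,d\mu$ and does not address the regularity or bijectivity issues of the transport.)
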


Roughly speaking, this means that the balls about the origin define the isoperimetric
profile up to some universal constant. In the proof we apply the
mass transportation techniques.

The idea of applying the mass transportation to  isoperimetric
inequalities belongs to M. Gromov. In particular, he applied
the triangular mappings (Knothe mappings) to obtain the classical
isoperimetric inequality. Unfortunately, it seems hard to obtain sharp constants
for isoperimetric inequalities by using only the mass transportation arguments in more general situations. Nevertheless, they can
be used for proving the isoperimetric inequalities with the best
rate. Gromov arguments for the Euclidean isoperimetric problem are
nowadays well-known and can be found in many papers and books. Let us give
 another interesting example.

\begin{example}
\label{hd-iso}
The following isoperimetric inequality holds
in the hyperbolic space $H_d$:
$$
\nu^{+}(\partial A) \ge \max
\Bigl[ \frac{1}{\kappa_d} \nu^{1-\frac{1}{d}}(A), (d-1) \nu(A) \Bigr].
$$
Consider the hyperbolic space $H_{d} = \R^{d-1} \times R^{+}$,
$$
g = \frac{dy_1^2 + \cdots dy_d^2}{y_d^2} =  g_0 (dy_1^2 + \cdots dy_d^2).
$$
Then $
\nu = \frac{d y_1 \cdots dy_d}{y^{d}_d} I_{\{y_d >0\}}.
$
Consider a bounded Borel set $A \subset \{ y_d> \varepsilon\}$  with $\varepsilon >0$.
Let $T$ be the  optimal Euclidean transportation  map pushing forward $\nu|_A$ to the Lebesgue measure
restricted to some Euclidean ball $B_r \subset \R^d$ with a center to be chosen later.
By the change of variables
formula $g^{d/2}_0 = \det D^2 W$
on $A$, hence $
0 = \log \det \Bigl( \frac{1}{\sqrt{g_0}} D^2 W \Bigr)
\le \frac{\mbox{Tr} D^2 W}{\sqrt{g_0}} - d$. Integrating by parts we obtain
$$
d \nu(A) \le \int_{A} \frac{\mbox{Tr} D^2 W}{\sqrt{g_0}} g^{d/2}_{0}\,d\lambda
=
 \int_{\partial A} \frac{\langle n_A, \nabla W \rangle}{g^{1/2}_0} g^{d/2}_{0}\,d\mathcal{H}^{d-1}
 -\int_{A} \langle \nabla W, \nabla g^{\frac{d-1}{2}}_0 \rangle \,d\lambda.
$$
Noting that $
|\nabla_M f|^2_{M} = \frac{1}{g_0} \Bigl( \frac{\partial f}{\partial x_i} \Bigr)^2
$
and estimating $\frac{\langle n_A, T\rangle}{g^{1/2}_0}$
by
$
|T| |n_A|_{M}
$
we get
$
d\nu(A)
\le
\sup |T| \nu^{+}(\partial A)
+
(d-1) \int_{A}\frac{ \langle T, e_d \rangle}{y^d_d} \,d\lambda.
$
Applying the change of variables one gets
$$
d\nu(A)
\le
\sup |T| \nu^{+}(\partial A)
+
(d-1) \int_{B_r} y_d \,d\lambda.
$$
Choosing the center of $B_r$ at the point $(0,-tr)$ with $t \ge 0$ we
get $\sup |T| \le (t+1)r$. In addition, using $\int_{B_r} (y+rt) \,d \lambda =0$, one obtains
$$
d \nu(A) \le (t+1)r \nu^{+}(\partial A)  -  (d-1) \frac{\pi^{d/2}}{\Gamma(1+ \frac{d}{2})} r^{d+1} t.
$$
Taking into account that $\nu(A) = \frac{\pi^{d/2}}{\Gamma(1+ \frac{d}{2})} r^d$
and choosing $t=0, t = +\infty$, one obtains the claim.
\end{example}

\begin{definition}
Let $\mu$, $\nu$ be a couple of probability measures. We say that $T$ is a radial mass transportation of $\mu$ to $\nu$ if
$\nu \circ T^{-1} = \mu$ and it has the form
$$
T(x) = g(r) \cdot N(x)
$$
with $r=|x|$ and  $|N(x)|=1$. In particular,
$T(\partial B_r) \subset \partial B_{g(r)}$.
\end{definition}

There are different ways to transport $\mu$ to $\nu$ by a radial
transportation mapping. Consider the decomposition $\R^d =
[0,\infty) \times S^{d-1}$.
 We use below the following construction.
Let $\nu_r$, $\mu_r$ be the one-dimensional images of $\nu$, $\mu$
under the mapping $x \to |x|= r$.
  Let $g(r)$ be the increasing function pushing forward $\nu_r$ to $\mu_r$. For every fixed $r$
  we
  denote by
 $\nu^{r}(\theta)$,  $\mu^{r}(\theta)$ the corresponding conditional measures
 on $S^{d-1}$ obtained by disintegration of $\nu, \mu$.
 Let
 $$T^{S^{d-1}}_r=T^{S^{d-1}}_r(\theta) :  S^{d-1} \to S^{d-1}$$
  be the optimal transportation mapping pushing forward
 $\nu^{r}(\theta)$ to  $\mu^{r}(\theta)$
 and minimizing the squared Riemannian distance on $ S^{d-1}$.

 Then
 $$
 T = g(r) \cdot T^{S^{d-1}}_r (x/r)
 $$
 is the desired mapping.
 Recall that
 $T^{S^{d-1}}_r$ has the form
 $$
 T^{S^{d-1}}_r = \exp(\nabla_{S^{d-1}} \varphi)
 $$
 for some $\frac{1}{2} d^2$-convex potential $\varphi$. Here $\nabla_{S^{d-1}}$ is the spherical gradient on $S^{d-1}$, $\exp$ is the exponential mapping on $S^{d-1}$ and, in addition,
 \begin{equation}
 \label{tr-manif}
 |\nabla_{S^{d-1}} \varphi(x) | = d(x, T^{S^{d-1}}_r(x)).
 \end{equation}

For a fixed $x$ consider a unit vector $v$ such that $v \bot x$.
One has
$$
\partial_r T = g_r \cdot T^{S^{d-1}}_r + g \cdot \frac{\partial T^{S^{d-1}}_r }{\partial r},
$$
$$
\partial_v T =  g \cdot \frac{\partial T^{S^{d-1}}_r }{\partial v}.
$$

We use below a computation obtained in \cite{CordEras} (pp. 48, 96) (see also \cite{CordEras2}). Consider a mapping $\tilde{T} = \exp(\nabla_{S^{d-1}} \varphi)$.
  Choose for a fixed $x \in S^{d-1}$ an orthonormal basis in the tangent space to $S^{d-1}$
  such that the first vector is equal to $\frac{\nabla_{S^{d-1}} \varphi}{\theta}$, where
   $\theta = |\nabla_{S^{d-1}} \varphi|$. In this basis the Jacobian matrix looks like
  $$
  D\tilde{T} =
\left( \begin{array}{cc}
1+ a &  b^t \\
\frac{\sin \theta}{\theta} b & \cos \theta  \cdot I + \frac{\sin \theta}{\theta} \cdot D
\end{array}
\right),
  $$
  where
  $$
  \mbox{Hess} \ \varphi =
\left( \begin{array}{cc}
a &  b^t \\
b &  D
\end{array}
\right).
  $$
The $d^2$-convexity condition takes the form
$H \ge 0$, where
  $$
  H =
\left( \begin{array}{cc}
1+ a &  b^t \\
b &  \frac{\theta}{\tan \theta} I + D
\end{array}
\right).
  $$
Taking into account that
$B_{r} = r S^{d-1}$
and $
\partial_r T_r ,
\partial_v T_r $ are orthogonal to $T_r$
 one gets
\begin{equation}
\label{chvarshp}
\det DT
=
 g_r \Bigl( \frac{g}{r} \Bigr)^{d-1}
\det DT_r
=g_r \Bigl( \frac{g}{r} \Bigr)^{d-1}
\det
\left( \begin{array}{cc}
1+ a &  b^t \\
\frac{\sin \theta}{\theta} b & \cos \theta  \cdot I + \frac{\sin \theta}{\theta} \cdot D
\end{array}
\right)
\end{equation}

\begin{proof} ({\it Theorem \ref{ball-set-est}}).
 Now fix a Borel set $A$ and take $T$ pushing forward $\mu|_{A}$ to $\mu|_{B_{r_0}}$. Srt:
$$
\mu = \rho(r) \ dx
$$
and apply (\ref{chvarshp})
 \begin{equation}
 \label{cv}
 \rho(g(r)) g_r \Bigl( \frac{g}{r} \Bigr)^{d-1}  \det DT_r  = \rho(r).
 \end{equation}
 Note that the change of variables formula requires some additional justification.
According to the results of Section 3 we can first symmetrize $A$ (see Proposition \ref{symmetry} or \cite{MHH})
and
deal from the very beginning only with sets with a revolution axis.
In this case (\ref{chvarshp}) clearly holds. Indeed,  every $T_r$ is smooth because it is an optimal mapping sending a
spherical cap onto $S^{d-1}$.

 Take the logarithm of the  both sides of (\ref{cv}).
 We apply inequality $\ln \det M \le {\mbox Tr}M-  n$ which holds for every symmetric positive $n \times n$-matrix $M$.
 It is easy to check, that the following inequality holds :
 $$
 \ln \rho - \ln \rho(g) \le \frac{g}{r} \Bigl(1+ a + (d-2) \cos \theta  + \frac{\sin \theta}{\theta} \mbox{Tr} D \Bigr)  + g_r - d.
 $$
 Using the estimates $\frac{\sin \theta}{\theta} \le 1$, $\frac{\theta}{\tan \theta} \le 1$ and positivity of $H$  one obtains
 \begin{align*}
 &
 1+ a +
 (d-2) \cos \theta  + \frac{\sin \theta}{\theta} \mbox{Tr} D =
 \\&
 =
 1+a + \frac{\sin \theta}{\theta} \Bigl( \mbox{Tr} D + \frac{\theta}{\tan \theta} (d-2)  \Bigr)
 \le
 1+a + \Bigl( \mbox{Tr} D + \frac{\theta}{\tan \theta} (d-2)  \Bigr)
 \\&
 \le
   a + d-1 + \mbox{Tr} D = \Delta_{S^{d-1}}  \varphi + d-1.
 \end{align*}
 Let us integrate over $A$ with respect to $\mu = \rho \ dx$ :
 $$
\int_{A} \rho \log \rho \ dx - \int_{A} \rho \log \rho(g) \ dx
\le
\int_{A} \Bigl( \frac{g}{r} (\Delta_{S^{d-1}}  \varphi + d-1) + g_r\Bigr) \rho \ dx -d \mu(A).
 $$
 Applying integration by parts we get
 \begin{align*}
 \int_{A} g_r \rho \ dx & =
 \int_{A} \big\langle \nabla g , \frac{x}{r}  \big\rangle \rho \ dx
 \\&
 =
 \int_{\partial A} g \big\langle n_A , \frac{x}{r}  \big\rangle \ \rho \ d \mathcal{H}^{d-1}
 - (d-1) \int_{A}  \frac{g\rho}{r}    \ dx
 - \int_{A}  g  \rho_r \ dx.
 \end{align*}

 Hence
 $$
 \int_{A} \log\frac{\rho}{\rho(g)} \rho \ dx + d \mu(A)
 + \int_{A}  g  \rho_r \ dx
 \le
 \int_{\partial A} g \big \langle n_A , \frac{x}{r}  \big\rangle \ \rho \ d \mathcal{H}^{d-1}
 +
 \int_{A} \frac{g}{r} \rho \cdot \Delta_{S^{d-1}}  \varphi  \ dx.
 $$

By the coarea formula
$$
 \int_{A} \frac{g}{r} \rho \cdot \Delta_{S^{d-1}}  \varphi  \ dx
 =
 \int_{0}^{\infty} \Bigl[ \frac{1}{r} \int_{\partial B_r \cap A}  \Delta_{S^{d-1}} \varphi \ d \mathcal H^{d-1} \Bigr] \ g \rho \ dr.
 $$

Integrating by parts on $\partial B_r = r S^{d-1}$ we get
that for every smooth $\xi$
$$
\int_{\partial B_r}  \Delta_{S^{d-1}} \varphi \ \xi \ d \mathcal H^{d-1}
=
-
\int_{\partial B_r}  \langle \nabla_{S^{d-1}} \varphi, \nabla_{S^{d-1}}  \xi \rangle \ d \mathcal H^{d-1}.
$$
Note that $\frac{1}{r} \nabla_{S^{d-1}} \xi$ is nothing else but the projection $Pr_{TS^{d-1}} \nabla \xi$  of the $\nabla \xi$ onto the tangent space to
$\partial B_r$.
Hence
\begin{align*}
 \int \frac{g}{r} \rho \xi \cdot \Delta_{S^{d-1}}  \varphi  \ dx
 =
 &
 -
\int_{0}^{\infty} \Bigl[  \int_{\partial B_r } \langle Pr_{TS^{d-1}} \nabla \xi, \nabla_{S^{d-1}}  \varphi \rangle \ d \mathcal H^{d-1} \Bigr] \ g \rho \ dr
\\&
  = -
\int \langle Pr_{TS^{d-1}} \nabla \xi, \nabla_{S^{d-1}}  \varphi \rangle  \ g \rho \ dx.
 \end{align*}

Approximating $I_A$ by smooth functions we get
$$
 \int_{A} \frac{g}{r} \rho  \cdot \Delta_{S^{d-1}}  \varphi  \ dx
 =
\int_{\partial A} \langle Pr_{TS^{d-1}} n_A, \nabla_{S^{d-1}}  \varphi  \rangle  g \rho \ d \mathcal{H}^{d-1}.
$$

Hence
 \begin{align*}
 &
 \int_{A} \rho \log\frac{\rho}{\rho(g)}  \ dx + d \mu(A)
 + \int_{A}  g  \rho_r \ dx
 \le
 \\&
 \le
 \int_{\partial A} g \big \langle n_A , \frac{x}{r}  \big\rangle \ \rho \ d \mathcal{H}^{d-1}
 +
\int_{\partial A} \langle Pr_{TS^{d-1}} n_A, \nabla_{S^{d-1}}  \varphi  \rangle  g \rho \ d \mathcal{H}^{d-1}.
 \end{align*}

Since $\exp(\nabla_{S^{d-1}}  \varphi)$ takes values in the unit sphere, one has $|\nabla_{S^{d-1}}  \varphi| \le \pi$ (see (\ref{tr-manif})) and the right-side does not exceed
$$
               \int_{\partial A} g \sqrt{1 + \pi^2} \rho \  d \mathcal{H}^1
                 \le r_0 \sqrt{1 +  \pi^2} \mu^{+}(\partial A).
$$
Note that $g(r) \le r$. Hence
 $$
  \int_{A} \log\frac{\rho}{\rho(g)} \rho \ dx  \ge 0
  $$
  and
  $$
   \int_{A}  g  \rho_r \ dx = \int_{A}  g  v_r \ d\mu
   =  \int_{B_{r_0}}  r  v_r(g^{-1}) \ d \mu
   \ge
   \int_{B_{r_0}}  r  v_r \ d \mu.
  $$
  Finally, we obtain
  $$
  d \mu(B_{r_0})
 +  \int_{B_{r_0}}  r  v_r \ d \mu
   \le r_0 \sqrt{1 +  \pi^2} \mu^{+}(\partial A).
  $$
 The divergence theorem implies  that the left-hand side is equal to
  $r_0 \mu^{+}(\partial B_{r_0})$.
 Hence
  $$
  \mu^{+}(\partial B_{r_0}) \le
  \sqrt{1 +  \pi^2} \mu^{+}(\partial A).
  $$
\end{proof}

\subsection{Product measures. A comparison theorem.}

We start this subsection with a comparison result. The comparison
theorems are very important tools for studying  the isoperimetric
estimates. The most well-known example is the Levy-Gromov's isoperimetric inequality  for the Ricci positive
manifolds. Its probabilistic
version is given by the Bakry-Ledoux comparison theorem  \cite{BakLed} (see also
\cite{MorganMWD}).

{\bf Theorem (Bakry-Ledoux):}
Assume that
$$
\mu = e^{-V} \ dx,
$$
is a probability measure with $V$ satisfying
$$
D^2 V \ge c \cdot \mbox{Id}, \ \ c>0
$$
and
$\gamma_c$ is the Gaussian measure with the covariance operator $c \cdot \mbox{Id}$.
Then
$$
\mathcal{I}_{\mu} \ge \mathcal{I}_{\gamma_c}.
$$

The Bakry-Ledoux theorem is an immediate corollary of the following result:

{\bf Theorem (Caffarelli):} For every probability measure $\mu = e^{-V} \ dx$
with $D^2 V \ge I$ the optimal transportation mapping $T = \nabla \varphi$
with convex $\varphi$ which  pushes forward
the standard Gaussian measure $\gamma$ onto $\mu$  is $1$-Lipschitz
(see \cite{Caff}, Theorem 11 and recent developments in \cite{BarKol},
\cite{Kol}, \cite{Vald}, \cite{KimMilman}).

Note that the spaces from these examples are positively curved
(i.e. with a positive Bakry-Emery tensor). Concerning the
negatively curved spaces,  it is still an open problem,
whether the Cartan-Hadamard conjecture holds  in general case.

{\bf Cartan-Hadamard conjecture:} Let $M$ be a complete, smooth, simply connected Riemannian manifold with
 sectional curvatures bounded from above by
a constant nonpositive value $c$. The isoperimetric function $\mathcal{I}_M$
satisfies $\mathcal{I}_M \ge \mathcal{I}_c$, where $\mathcal{I}_c$
is the isoperimetric function of the model
space with the constant sectional curvature $c$.

The conjecture is known to be true for a long time for $d=2$  (see, for instance,  \cite{W}).
Other known cases: $d=3$ (B.~Kleiner, \cite{Klein}), $d=4, \ c=0$ (C.~Croke, \cite{Croke}).
Some new proofs and recent developments can by found in
M.~Ritor{\'e} \cite{Ritore}, F.~Schulze \cite{Schulze}.

In this paper we prove a comparison result for the products of
log-convex measures. It turns out that a natural model measure
for the one-dimensional log-convex distributions  has the following form:
$$
\nu_{A} = \frac{dx}{\cos Ax}, \ -\frac{\pi}{2A}  < x < \frac{\pi}{2A}.
$$
Its potential $V$  satisfies
$$
V'' e^{-2V} =A^2.
$$

By a result from \cite{RCBM} (Corollary 4.12) the isoperimetric sets for strictly
log-concave even measures on the line are symmetric intervals containing the
origin.

Using this result it is easy to compute
the isoperimetric profile of $\nu_A$:
$$
\mathcal{I}_{\nu_A}(t) =  e^{At/2}+e^{-At/2}.
$$

It turns out that in the log-convex case the following quantity is a natural measure of convexity of the potential:
$$
W^{''} e^{-2W}
$$
(unlike $W^{''}$ in the probabilistic case).

We establish here the following  analog of the Caffarelli result.

\begin{proposition}
Let $\mu = e^{W} dx$ be a measure with even convex potential $W$.
Assume that
$$W''e^{-2W} \ge A^2,$$
and $W(0)=0$. Then $\mu$ is the image of $\nu_A$ under
a $1$-Lipschitz increasing mapping.
\end{proposition}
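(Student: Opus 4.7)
The plan is to take the monotone increasing optimal transport map $T$ from $\nu_A$ onto $\mu$ and show directly that $T'\leq 1$. By the even symmetry of both measures, $T$ is odd with $T(0)=0$, and differentiating the transport identity $\int_0^y \sec(As)\,ds = \int_0^{T(y)} e^{W(s)}\,ds$ yields
\[
T'(y) = \frac{e^{-W(T(y))}}{\cos(Ay)}.
\]
Setting $u:=T'$ and taking one more logarithmic derivative,
\[
\frac{u'(y)}{u(y)} = A\tan(Ay) - W'(T(y))\,u(y).
\]

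The next step is to extract a pointwise bound on $W'$. Multiplying the hypothesis $W''\geq A^2 e^{2W}$ by $2W'\geq 0$ (on $\{x\geq 0\}$) and integrating from $0$, using $W(0)=0$ and $W'(0)=0$ (the latter by evenness), I obtain
\[
W'(x) \geq A\sqrt{e^{2W(x)} - 1}, \qquad x\geq 0.
\]
Since $e^{-W(T(y))}=u(y)\cos(Ay)$, substitution into the ODE yields the key differential inequality
\[
\frac{u'(y)}{u(y)} \leq \frac{A}{\cos(Ay)}\Bigl[\sin(Ay) - \sqrt{1 - u(y)^2\cos^2(Ay)}\Bigr].
\]
The decisive point is that whenever $u\leq 1$ one has $\sqrt{1-u^2\cos^2(Ay)}\geq \sqrt{1-\cos^2(Ay)} = \sin(Ay)$, so the right-hand side is nonpositive. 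Hence $u$ is nonincreasing on any interval where $u\leq 1$.

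Combined with the initial values $u(0)=1$, $u'(0)=0$, and $u''(0)=A^2-W''(0)\leq 0$, a barrier argument rules out that $u$ ever exceeds $1$. Indeed, if $y_0>0$ were the first hypothetical crossing, the differential inequality together with the one-sided limits would force $u'(y_0)=0$; a second-order computation then gives
\[
u''(y_0) = A^2\sec^2(Ay_0) - W''(T(y_0))
\]
(using $u(y_0)=1$, $u'(y_0)=0$ and the hypothesis at $x=T(y_0)$, where $e^{2W(T(y_0))}=\sec^2(Ay_0)$), contradicting the assumed crossing in the strict case.

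The main obstacle is the degenerate case where equality $W''(T(y_0))=A^2\sec^2(Ay_0)$ holds and the second-order test becomes inconclusive. I would resolve it by observing that equality in the integrated bound forces $W(x)=-\log\cos(Ax)$ on $[0,T(y_0)]$ by uniqueness for the Cauchy problem $W''=A^2 e^{2W}$ with $W(0)=W'(0)=0$. Consequently $\mu$ coincides with $\nu_A$ on that initial interval, $T$ is the identity there, $u\equiv 1$ locally, and the analysis continues past $y_0$ by recasting the estimate as a Gronwall-type comparison for $\phi(y):=W(T(y))+\log\cos(Ay)\geq 0$, whose driving function is Lipschitz in $\phi$ for $y>0$. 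Evenness extends the bound to $y\leq 0$, so $T$ is $1$-Lipschitz as claimed.
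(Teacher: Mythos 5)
Your approach is genuinely different from the paper's, though it reaches the same conclusion. The paper takes the transport in the \emph{opposite} direction: it sets $T=\varphi'$ pushing $\mu$ forward to $\nu_A$, writes the change-of-variables identity $e^W=\varphi''/\cos(A\varphi')$, and then argues by contradiction at an interior local maximum of $\varphi''$: there $\varphi^{(3)}=0$, $\varphi^{(4)}\le 0$, and twice differentiating the identity gives $W''\le A^2e^{2W}$, contradicting the (WLOG strict) hypothesis. Combined with evenness, $\varphi''$ is then nondecreasing on $[0,\infty)$ with $\varphi''(0)=1$, so $\varphi''\ge 1$ and $T^{-1}$ is $1$-Lipschitz. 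Your argument instead works with $T:\nu_A\to\mu$ directly, and the key move is the \emph{integrated} form of the hypothesis, $W'(x)\ge A\sqrt{e^{2W(x)}-1}$, obtained by multiplying by $2W'$ and integrating from $0$. This turns the curvature-type bound into a pointwise first-order bound, which meshes cleanly with the first-order equation for $u=T'$ and yields the sign-definite differential inequality for $\log u$. That is an attractive and more ``ODE-flavored'' route: it replaces the paper's second-derivative test at an extremum of $\varphi''$ by a monotonicity/comparison argument for $u$ itself.

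The place where your write-up is genuinely thinner than the paper's is the degenerate case. The paper simply declares ``WLOG $W''e^{-2W}>A^2$'' and works in the strict regime (the approximation, e.g.\ replacing $A$ by $A-\delta$ and letting $\delta\downarrow 0$, is left implicit). You instead attempt to treat degeneracy head-on. The observation that $u(y_0)=1$, $u'(y_0)=0$ forces $W'(T(y_0))=A\tan(Ay_0)=A\sqrt{e^{2W(T(y_0))}-1}$ (equality in the integrated bound) is correct; since $h:=W'^2-A^2(e^{2W}-1)$ is nonnegative, nondecreasing, and vanishes at $0$, $h\equiv 0$ on $[0,T(y_0)]$ and hence $W=-\log\cos(Ax)$ there, $T=\mathrm{id}$, $u\equiv 1$, so your reduction is sound. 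However, the proposed Gronwall comparison for $\phi(y)=W(T(y))+\log\cos(Ay)$ has a nonuniform Lipschitz constant ($\partial_\phi\psi|_{\phi=0}=A/\tan(Ay)\to\infty$ as $y\to 0^+$), so the argument does not close by itself when the would-be crossing point $y_0$ is $0$, i.e.\ exactly when $W''(0)=A^2$: at that point $u(0)=1$, $u'(0)=0$, $u''(0)=0$ and the Taylor expansion of $\phi$ near $0$ only starts at fourth order, which your second-order test does not control. The cleanest repair is precisely the paper's: approximate (e.g.\ shrink $A$ slightly, or smooth $W$ to make the inequality strict), apply your barrier argument in the strict case where $u''<0$ at any potential crossing, and pass to the limit, using that the monotone transport maps converge locally uniformly and the $1$-Lipschitz bound is closed under such limits. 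With that amendment your proof is complete and, in my view, a nice complement to the paper's extremum argument.
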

\begin{proof}
Without loss of generality one can assume that $W$ is smooth and $W''e^{-2W} > A^2$.
Let $\varphi$ be a convex potential such that
$T=\varphi'$ sends $\mu$ to $\nu_A$.
In addition, we require that $T$ is antisymmetric.
Clearly, $\varphi'$ satisfies
$$
e^{W}  = \frac{\varphi''}{\cos A \varphi'}.
$$
Assume that $x_0$ is a local maximum point for $\varphi''$.
Then at this point
$$
\varphi^{(3)}(x_0)=0 \ \ \varphi^{(4)}(x_0) \le 0.
$$
Differentiating the change of variables formula at $x_0$ twice we get
$$
W''
= \frac{\varphi^{(4)}}{\varphi''}
- \Bigl( \frac{\varphi^{(3)}}{\varphi''}\Bigr)^2 + \frac{A^2}{\cos^2 A \varphi'}
(\varphi'')^2 + A \frac{\sin A \varphi'}{\cos A \varphi'} \varphi^{'''}.
$$
Consequently one has at $x_0$
$$
W''
\le \frac{A^2}{\cos^2 A \varphi'} (\varphi'')^2 = A^2 e^{2W}.
$$
But this contradicts to the main assumption.

Hence $\varphi''$ has no local maximum. Note that $\varphi$ is even. This implies that that $0$
is the global minimum of $\varphi''$. Hence
$$
\varphi'' \ge \varphi''(0)=1.
$$
Clearly, $T^{-1}$ is the desired mapping.
\end{proof}

The corollary below can be seen as an elementary "flat" version of the
Cartan-Hadamard-type comparison results or as a log-convex version
of the Bakry-Ledoux comparison theorem (see also \ref{log-convex-euclid}).

\begin{corollary}
Let $\mu$ be a product measure
$$
\mu = \prod_{i=1}^d e^{W_i} \ dx_i
$$
with
$$
W^{''}_i e^{-2W_i} \ge A^2, \ \ \mbox{\rm $W_i$ is even  and} \ W_i(0)=0.
$$
Then
$$
\mathcal{I}_{\mu} \ge \mathcal{I}_{\mu_A},
$$
where $\mu_A = \prod_{i=1}^{d} \frac{dx_i}{\cos Ax_i}$
is the measure on $[-\frac{\pi}{2A}, \frac{\pi}{2A}]^d$.
\end{corollary}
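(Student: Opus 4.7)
The plan is to bootstrap the one-dimensional comparison supplied by the preceding proposition to a $d$-dimensional comparison by taking a product map and invoking the standard fact that $1$-Lipschitz push-forwards monotonise the isoperimetric profile.

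First, I apply the preceding proposition coordinate by coordinate. For each $i \in \{1,\ldots,d\}$ the one-dimensional measure $e^{W_i}\,dx_i$ is the push-forward of $\tfrac{dx_i}{\cos Ax_i}$ under some $1$-Lipschitz increasing map $S_i : (-\tfrac{\pi}{2A},\tfrac{\pi}{2A}) \to \R$ (namely $S_i = T_i^{-1}$ where $T_i = \varphi_i'$ is the map produced in the proposition; since $\varphi_i'' \ge 1$, one has $S_i' = 1/\varphi_i'' \le 1$). Then I assemble the product map
\[
S : \Bigl[-\tfrac{\pi}{2A}, \tfrac{\pi}{2A}\Bigr]^d \to \R^d, \qquad S(x_1,\ldots,x_d) = \bigl(S_1(x_1),\ldots,S_d(x_d)\bigr).
\]
By Fubini, $S_\ast \mu_A = \mu$. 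For the Lipschitz constant in the Euclidean metric I use the elementary computation
\[
|S(x)-S(y)|^2 = \sum_{i=1}^d |S_i(x_i)-S_i(y_i)|^2 \le \sum_{i=1}^d (x_i-y_i)^2 = |x-y|^2,
\]
so $S$ is itself $1$-Lipschitz.

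Next I invoke the general principle: if $S : (X,d_X,\nu) \to (Y,d_Y,\mu)$ is $1$-Lipschitz with $S_\ast \nu = \mu$, then $\mathcal{I}_\mu \ge \mathcal{I}_\nu$. This is immediate from the chain of inclusions and equalities: given a Borel set $A \subset Y$, let $B := S^{-1}(A)$, so that $\nu(B) = \mu(A)$; the $1$-Lipschitz property yields $B^\varepsilon \subset S^{-1}(A^\varepsilon)$, hence
\[
\nu(B^\varepsilon) - \nu(B) \le \mu(A^\varepsilon) - \mu(A),
\]
and passing to the liminf gives $\nu^+(\partial B) \le \mu^+(\partial A)$. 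Taking the infimum over $A$ with $\mu(A)=t$ yields $\mathcal{I}_\mu(t) \ge \mathcal{I}_\nu(t)$.

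Applying this with $\nu = \mu_A$ and the product map $S$ constructed above produces the desired inequality $\mathcal{I}_\mu \ge \mathcal{I}_{\mu_A}$. There is no real obstacle here: the one-dimensional Caffarelli-type bound has already been established, and the tensorisation step is essentially formal. The only point that deserves verification is precisely the preservation of the Lipschitz constant under taking the coordinate-wise product, which is trivial thanks to the Euclidean/Pythagorean structure of $\R^d$.
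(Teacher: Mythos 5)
Your proposal is correct and is essentially the argument the authors have in mind (they give no explicit proof, treating the corollary as an immediate consequence of the preceding proposition). The three ingredients — coordinate-wise application of the one-dimensional Caffarelli-type result, the Pythagorean observation that a product of $1$-Lipschitz coordinate maps is $1$-Lipschitz on $\R^d$, and the standard principle that a $1$-Lipschitz push-forward $S_\ast\nu=\mu$ yields $\mathcal{I}_\mu\ge\mathcal{I}_\nu$ via $B^\varepsilon\subset S^{-1}(A^\varepsilon)$ for $B=S^{-1}(A)$ — are exactly what is needed, and you have stated and verified each of them cleanly.
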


It is not clear, whether this result can be generalized to the
multi-dimensional case. More generally, it is not clear, which
measure of convexity $V$ is responsible for the isoperimetric
properties (in the probabilistic case this is the Hessian of the
potential). Surprisingly, in certain situation some lower bounds
on
$$\det D^2 V \cdot e^{-V}$$ turn out to be sufficient for some isoperimetric estimates.

We denote by $\kappa_d$ the constant appearing in the Euclidean isoperimetric inequality
$\lambda(A)^{1 - \frac{1}{d}} \le \kappa_{d} \mathcal{H}^{d-1}(\partial A)$.

\begin{proposition}
Let $V \ge 0$ be convex and, in addition,
\begin{equation}
\label{prod-det}
e^{-V} \det D^2 V \ge K^{d}
\end{equation}
for some $K \ge 0$.
Then for some constant $C(d)$ the following inequality holds
\begin{equation}
\label{gbi-rd}
\mu(A)^{1-\frac{1}{d}}
+   K  C(d) \ \mu(A)^{1+\frac{1}{d}}
\le
 \kappa_d \mu^{+}(\partial A).
 \end{equation}
\end{proposition}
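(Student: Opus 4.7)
The strategy is to combine two lower bounds on $\mu^+(\partial A)$. The first, $\mu(A)^{1-1/d}\le \kappa_d\mu^+(\partial A)$, is the Euclidean-type isoperimetric inequality for log-convex measures with $V\ge 0$ convex; this is the case $K=0$ of the present proposition and (as promised in the introduction) can be obtained separately via a Brenier transport from $\mu|_A$ to $\lambda|_{B_r}$ with $\omega_d r^d=\mu(A)$, yielding $\det D^2W=e^V$, followed by the AM--GM inequality $\Delta W\ge d\,e^{V/d}$ integrated against an appropriate weight. The second is a Cheeger-type bound $\mu^+(\partial A)\ge c_d K\,\mu(A)^{1+1/d}$ that uses the full strength of the determinantal hypothesis, and it is the main new ingredient.

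The key observation is that the hypothesis $e^{-V}\det D^2V\ge K^d$ says exactly that the push-forward $(\nabla V)_\sharp\mu$ has Radon--Nikodym density at most $K^{-d}$ with respect to Lebesgue measure: by the change of variables formula this density equals $e^V/\det D^2V$. Writing $\omega_d=\lambda(B_1)$, this yields
\[
 \mu\bigl(\{x:|\nabla V(x)|\le t\}\bigr)=(\nabla V)_\sharp\mu(B_t)\le K^{-d}\omega_d t^d\qquad\text{for every }t>0.
\]
Since $V$ is convex its sub-level sets are convex, so Lemma~\ref{ibp-estim} gives $\mu^+(\partial A)\ge\int_A|\nabla V|\,d\mu$; combining with the Cavalieri layer-cake formula and the distributional bound,
\[
 \int_A|\nabla V|\,d\mu=\int_0^\infty\mu\bigl(A\cap\{|\nabla V|>t\}\bigr)\,dt\ge\int_0^{t_\ast}\bigl(\mu(A)-K^{-d}\omega_d t^d\bigr)\,dt,
\]
where $t_\ast=K(\mu(A)/\omega_d)^{1/d}$ is the zero of the integrand. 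A short one-variable computation then delivers $\int_A|\nabla V|\,d\mu\ge c_d K\,\mu(A)^{1+1/d}$ with $c_d=d/((d+1)\omega_d^{1/d})$, establishing the Cheeger-type bound.

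Taking a convex combination of the two bounds and rearranging produces an inequality of the required form, with the constant $C(d)$ absorbing the universal multiplicative factors. The main technical obstacle is the change-of-variables step for $\nabla V$: it requires $\nabla V$ to be essentially a smooth bijection, whereas a general convex $V$ has a gradient defined only almost everywhere and its $\det D^2V\,dx$ must be interpreted as the Monge--Ampère measure of $V$. This is handled by the standard regularisation $V_\varepsilon=V\ast\rho_\varepsilon+\tfrac{\varepsilon}{2}|x|^2$, which is smooth and strictly convex, still satisfies $V_\varepsilon\ge 0$, and inherits the determinantal hypothesis with $K_\varepsilon\to K$; one carries out the argument above for $V_\varepsilon$ and passes to the limit.
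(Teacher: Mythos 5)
Your idea of combining a log-convex Euclidean bound with a Cheeger-type bound obtained from $\int_A|\nabla V|\,d\mu$ and the distributional estimate $(\nabla V)_\sharp\mu\le K^{-d}\lambda$ is a genuinely interesting angle, and the Cheeger piece is carried out correctly. But the final combination step fails, and the failure is structural, not cosmetic. You have two separate lower bounds
\[
\kappa_d\mu^{+}(\partial A)\ge \mu(A)^{1-\frac1d}
\qquad\text{and}\qquad
\kappa_d\mu^{+}(\partial A)\ge \kappa_d c_d K\,\mu(A)^{1+\frac1d},
\]
and you want to conclude $\kappa_d\mu^{+}(\partial A)\ge \mu(A)^{1-\frac1d}+C(d)K\,\mu(A)^{1+\frac1d}$ with coefficient exactly $1$ on the first term. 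Writing $z=K\mu(A)^{2/d}$ and dividing by $\mu(A)^{1-1/d}$, what you have is $\kappa_d\mu^{+}(\partial A)/\mu(A)^{1-1/d}\ge \max(1,\kappa_d c_d z)$ while what you want is $\ge 1+Cz$. For small $z>0$ the right-hand side exceeds $1$ while the left is exactly $1$, so no choice of $C(d)>0$ works. A convex combination $\lambda$ of the two bounds only produces $\lambda\mu(A)^{1-1/d}$, and taking the maximum gives no additive improvement in the regime where the first bound dominates. In short, the two separate estimates each come with an independent dissipation of the surface measure, and there is no way to recover the additive form without coupling the two contributions inside a single integration-by-parts identity.

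That coupling is precisely what the paper's argument does. It pushes $\mu|_A$ forward to $\lambda|_{B_r}$ by a Brenier map $\nabla W$, uses $0\le V\le\Delta W_a-d$, and then the divergence theorem splits the single surface integral $\int_{\partial A}\langle n_A,\nabla W\rangle e^V$ into $d\mu(A)+\int_A\langle\nabla W,\nabla V\rangle e^V\,dx$. The first summand yields $\mu(A)^{1-1/d}$ with coefficient exactly $1$; the second is rewritten via $\Phi=W^*$ and $x=\nabla\frac{|x|^2-r^2}{2}$ as $\frac12\int_{B_r}(r^2-|x|^2)\,\mathrm{Tr}\bigl[D^2V(\nabla\Phi)\,D^2_a\Phi\bigr]d\lambda$, and the AM--GM inequality together with the hypothesis $e^{-V}\det D^2V\ge K^d$ turns this into $CK r^{d+2}$, which produces the $K\mu(A)^{1+1/d}$ term. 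Both terms arise from \emph{one} application of the divergence theorem to \emph{one} test vector field, so they are genuinely additive. Your argument uses two unrelated vector fields, $\nabla W$ (implicitly, in the Euclidean bound) and $\nabla V/|\nabla V|$ (in the Cheeger bound), and one cannot add the resulting estimates.

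A secondary but real concern is the regularisation step. With $V_\varepsilon=V\ast\rho_\varepsilon+\tfrac{\varepsilon}{2}|x|^2$ one has $V\ast\rho_\varepsilon\ge V$ for an even mollifier (by convexity of $V$), hence $e^{-V_\varepsilon}\le e^{-V}$ pointwise, and the $\det^{1/d}$ gain from Minkowski and Jensen does not compensate uniformly in $x$: one only gets $\bigl(e^{-V_\varepsilon}\det D^2 V_\varepsilon\bigr)^{1/d}\ge K e^{-\varepsilon|x|^2/(2d)}+\varepsilon e^{-V_\varepsilon/d}$, which does \emph{not} give a bound $K_\varepsilon\to K$ uniformly on $\R^d$. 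One has to localise, and this needs to be checked against the global need for the distributional estimate $\mu(\{|\nabla V|\le t\})\le K^{-d}\omega_d t^d$. This is fixable with more care, but "standard regularisation" glosses over it.
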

\begin{proof}
Let $\nabla W$ be the  optimal transportation pushing forward
$\mu|_{A}$ to $\lambda|_{B_r}$.
By the change of variables formula (see \cite{McCann97})
$$
e^{V} = \det D^2W_a
$$
on $I_{A}$, where $D^2 W_{a}$ is the second Alexandrov derivative
of $W$ (recall that $D^2 W \ge D^2_a W$, where $D^2 W$ is the distributional derivative). Taking the
logarithm of the both sides and applying the standard estimate one
gets
$$
0 \le V \le \Delta W_a - d.
$$
Integrating over $A$ one gets
$$
d \mu(A) \le  \int_{A} \Delta W \,d\mu
=
 \int_{\partial A} \langle n_A, \nabla W \rangle e^{V} \ d \mathcal{H}^{d-1} -
 \int_{A} \langle \nabla W,\nabla V \rangle e^V \,d\lambda.
$$
Hence
\begin{equation}
\label{key_est}
d\mu(A) +  \int_{A} \langle \nabla W,\nabla V \rangle e^V \,d\lambda
\le
 r \mu^{+}(\partial A).
\end{equation}

By the change of variables
  $\int_{A} \langle \nabla W,\nabla V \rangle e^V \,d\lambda=
\int_{B_r} \langle x,\nabla V \circ \nabla \Phi \rangle  \,d\lambda,
$
where $\Phi = W^{*}$ is the corresponding convex conjugated function.
Note that $\nabla \Phi$ is the optimal transport of $\lambda|_{B_r}$  onto $\mu|_{A}$.
Taking into account that $x = \nabla  \frac{|x|^2 - r^2}{2}$ and integrating by parts
one gets
\begin{align*}
\int_{B_r} \langle x,\nabla V \circ \nabla \Phi \rangle  \,d\lambda &
\ge
\frac{1}{2} \int_{B_r} \bigl( r^2 - |x|^2\bigr) \mbox{Tr} \Bigl[ D^2 V \bigl( \nabla \Phi\bigr) \cdot D^2_a \Phi  \Bigr] \,d\lambda
\\&
=
\frac{1}{2} \int_{B_r} \bigl( r^2 - |x|^2\bigr) \mbox{Tr} \Bigl[ (D^2_a \Phi)^{\frac{1}{2}} \cdot D^2 V \bigl( \nabla \Phi\bigr) \cdot (D^2_a \Phi)^{\frac{1}{2}}  \Bigr] \,d\lambda.
\end{align*}
Since $\nabla \Phi$ pushes forward $\lambda|_{B_r}$ to $\mu|_{A}$, by the change of variables
$$
e^{V(\nabla \Phi)} \det D^2_a \Phi =1.
$$
Note that $(D^2_a \Phi)^{\frac{1}{2}} \cdot D^2 V \bigl( \nabla \Phi\bigr) \cdot (D^2_a \Phi)^{\frac{1}{2}} $ is a symmetric matrix.
It is nonnegative, since $V$ and $\Phi$ are convex.
Hence
\begin{align*}
&
\frac{1}{d} \mbox{Tr} \Bigl[ (D^2_a \Phi)^{\frac{1}{2}} \cdot D^2 V \bigl( \nabla \Phi\bigr) \cdot (D^2_a \Phi)^{\frac{1}{2}}  \Bigr]
\ge  \Bigl( \det D^2_a \Phi \cdot \det D^2 V \bigl( \nabla \Phi\bigr) \Bigr)^{\frac{1}{d}}
=
\\&
= \Bigl( e^{-V(\nabla \Phi)} \cdot \det D^2 V \bigl( \nabla \Phi\bigr) \Bigr)^{\frac{1}{d}}
\ge
K.
\end{align*}
Finally we obtain that for some constant $C$ depending only on $d$
$$
d\mu(A) +  \frac{d K C}{2}  r^{d+2}
\le
 r \mu^{+}(\partial A).
$$
The desired result follows from the relation
 $r = \Bigl(\frac{\mu(A)}{\lambda(B_1)}\Bigr)^{\frac{1}{d}} = d \kappa_d \mu^{\frac{1}{d}}(A)$.
\end{proof}

\begin{corollary}
\label{log-convex-euclid}
If $V \ge 0$ and convex, then $\mu= e^V \ dx$
satisfies the Euclidean isoperimetric inequality.
\end{corollary}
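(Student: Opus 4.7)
The plan is to deduce the corollary as the degenerate case $K=0$ of the preceding Proposition. Since $V$ is convex we automatically have $D^2V\ge 0$ in the sense of Alexandrov, hence $\det D^2V\ge 0$, and therefore the hypothesis
$$
e^{-V}\det D^2V\ge K^d
$$
is satisfied trivially with $K=0$, regardless of $V$. Combined with $V\ge 0$, this places $\mu=e^V\,dx$ in the scope of the previous proposition.

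Applying the conclusion (\ref{gbi-rd}) with $K=0$, the second term on the left collapses and we are left with
$$
\mu(A)^{1-\frac{1}{d}}\le \kappa_d\,\mu^{+}(\partial A),
$$
which is precisely the Euclidean isoperimetric inequality, now stated for the log-convex weighted measure $\mu$ in place of Lebesgue measure. No further estimate is required.

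The only caveat is a minor regularity issue: in the proof of the preceding proposition one invokes the change of variables formula for the Brenier map $\nabla W$ transporting $\mu|_A$ onto $\lambda|_{B_r}$ and integrates by parts against $\nabla V$. With $K=0$ the term involving $\langle\nabla W,\nabla V\rangle e^V$ can simply be discarded using the nonnegativity it enjoys from the convexity of both $V$ and $W$ (the integrand is $\langle\nabla W,\nabla V\rangle\,d\mu$ and one bounds it below by $0$). Thus the key inequality (\ref{key_est}) already gives $d\mu(A)\le r\,\mu^{+}(\partial A)$, and the identification $r=d\kappa_d\mu(A)^{1/d}$ finishes the argument. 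I do not anticipate any substantial obstacle here, since this is a pure specialization of the already-proved estimate.
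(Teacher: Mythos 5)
Your proof is correct and is exactly the intended argument: the corollary is the $K=0$ specialization of the preceding proposition, where convexity of $V$ guarantees $\det D^2 V \ge 0$ so the hypothesis $e^{-V}\det D^2 V\ge K^d$ holds trivially, and the second term in (\ref{gbi-rd}) vanishes. Your closing remark that one can bypass the determinant estimate entirely and just drop the nonnegative term $\int_A\langle\nabla W,\nabla V\rangle\,d\mu$ in (\ref{key_est}) is a nice simplification, but it is the same route.
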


\begin{example}
Note that condition (\ref{prod-det}) is tensorizable.
The following product measure on $[-\pi/2,\pi/2]^d$ satisfies inequality (\ref{gbi-rd}) :
$$
\mu = \prod_{i=1}^{d} \frac{d x_i}{\cos^2 x_i}.
$$
\end{example}

We finish this section with another isoperimetric estimate  for log-convex product measures.

\begin{theorem}
Consider a log-convex measure
$$
\mu = \prod_{i=1}^{d} e^{V_i(x_i)} \ dx_i
$$
such that every  $V_i$ is convex, even and $V_i(0)=0$. Assume that
there exists a concave increasing function $G: [0, \infty) \to [0,
\infty)$ satisfying
$$
e^{-V_i} G'(V'_i) V^{''}_i \ge 1.
$$
Then for some constants $c_1(d), c_2(d)$ the following inequality
holds $$ \mu(A)^{1-\frac{1}{d}} +   c_1(d) \ \mu(A) \cdot
G^{-1}\bigl(c_2 \mu^{1/d}(A)\bigr) \le
 \kappa_d \mu^{+}(\partial A).
 $$
\end{theorem}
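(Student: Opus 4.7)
The plan is to extend the optimal-transport argument of the preceding Proposition, letting the product structure of $\mu$ and the function $G$ replace the pointwise determinant hypothesis $e^{-V}\det D^2V\ge K^d$. First, Steiner-symmetrize $A$ successively in each coordinate direction; this is permissible because every $V_i$ is even, so $\mu$ is invariant under each reflection $x_i\mapsto -x_i$ and these symmetrizations do not increase $\mu^+(\partial A)$. Set $r=d\kappa_d\mu(A)^{1/d}$, so that $\lambda(B_r)=\mu(A)$, and let $\nabla W:A\to B_r$ be the Brenier map with convex potential $W$ and convex conjugate $\Phi=W^*$, so $\nabla\Phi:B_r\to A$ is the inverse transport and $e^{V(\nabla\Phi)}\det D^2_a\Phi=1$ in the Alexandrov sense. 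By the imposed symmetry we may take $\Phi$ even in each coordinate, so that $\partial_i\Phi$ is odd in $x_i$ and the quantity $x_iV_i'(\partial_i\Phi)$ is pointwise non-negative.

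Exactly as in the previous Proposition, combining the Monge--Amp\`ere identity $e^V=\det D_a^2W$ with the log-det trace estimate and the divergence theorem yields
$$
d\mu(A)+\int_A\langle\nabla W,\nabla V\rangle e^V\,d\lambda\le r\,\mu^+(\partial A).
$$
The first term alone yields $\mu(A)^{1-1/d}\le\kappa_d\mu^+(\partial A)$, which is the Euclidean part of the claim. It remains to extract the log-convex contribution from the second integral.

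Change variables to $B_r$ via $\nabla\Phi$, use $x_i=\partial_i\bigl((|x|^2-r^2)/2\bigr)$, and integrate by parts (the boundary term vanishes on $\partial B_r$) to get
$$
\int_A\langle\nabla W,\nabla V\rangle e^V\,d\lambda=\frac{1}{2}\int_{B_r}(r^2-|x|^2)\sum_iV_i''(\partial_i\Phi)\,\partial_{ii}^2\Phi\,d\lambda.
$$
Now arithmetic--geometric mean on the sum, Hadamard's inequality $\prod_i\partial_{ii}^2\Phi\ge\det D_a^2\Phi=e^{-V(\nabla\Phi)}$, and the hypothesis $V_i''\ge e^{V_i}/G'(V_i')$ combine to give the pointwise bound
$$
\sum_iV_i''(\partial_i\Phi)\,\partial_{ii}^2\Phi\ge d\Bigl(\prod_iG'(V_i'(\partial_i\Phi))\Bigr)^{-1/d}.
$$
Integrating the consequence $[G(V_i')]'=G'(V_i')V_i''\ge e^{V_i}\ge 1$ of the hypothesis (with $V_i\ge 0$ from convexity, evenness, and $V_i(0)=0$) from $0$ to $y$, using $V_i'(0)=0$ and $G(0)\ge 0$, gives $V_i'(y)\ge G^{-1}(|y|)$; together with $G'$ decreasing (from concavity of $G$) this yields $G'(V_i'(\partial_i\Phi))\le 1/(G^{-1})'(|\partial_i\Phi|)$, hence
$$
\int_A\langle\nabla W,\nabla V\rangle e^V\,d\lambda\ge\frac{d}{2}\int_{B_r}(r^2-|x|^2)\Bigl(\prod_i(G^{-1})'(|\partial_i\Phi|)\Bigr)^{1/d}d\lambda.
$$

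The remaining step, which I expect to be the principal technical obstacle, is to bound this integral from below by $c_1(d)\mu(A)^{1+1/d}G^{-1}(c_2\mu(A)^{1/d})$. My plan is: restrict to the smaller ball $B_{r/2}$ where $r^2-|x|^2\ge 3r^2/4$; invoke the tangent-line inequality $(G^{-1})'(t)\ge G^{-1}(t)/t$ (a consequence of $G^{-1}$ being convex with $G^{-1}(0)=0$) to replace each factor $(G^{-1})'(|\partial_i\Phi|)$ by $G^{-1}(|\partial_i\Phi|)/|\partial_i\Phi|$; apply Jensen's inequality for the convex $G^{-1}$ together with the pushforward identities $\int_{B_r}|\partial_i\Phi|\,d\lambda=\int_A|y_i|\,d\mu(y)$; and finally use a symmetry-based geometric lower bound $\int_A|y_i|\,d\mu\gtrsim r\mu(A)/d$ coming from the fact that $\nabla\Phi$ maps $\partial B_r$ onto $\partial A$, forcing the symmetrized $A$ to have coordinate extent of order $r$. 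This geometric reduction—passing from the one-dimensional profiles $|\partial_i\Phi|$ of the transport to the target radius $r=c_d\mu(A)^{1/d}$—is the main difficulty; once completed, adding the Euclidean term from the first paragraph yields the asserted two-term inequality.
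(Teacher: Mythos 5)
Your first three steps (Steiner symmetrization, the Brenier transport $\nabla W : A \to B_r$, and the inequality $d\mu(A) + \int_A\langle\nabla W,\nabla V\rangle e^V\,d\lambda \le r\,\mu^+(\partial A)$) agree with the paper. The divergence, and the gap, begins with the order in which you apply Jensen's inequality and the integration by parts. You integrate by parts on $V'$ first, obtaining $\tfrac12\int_{B_r}(r^2-|x|^2)\sum_iV_i''(\partial_i\Phi)\partial^2_{ii}\Phi\,d\lambda$, then apply AGM pointwise and feed in $G'(V_i')\le 1/(G^{-1})'(|\partial_i\Phi|)$, arriving at $\tfrac d2\int_{B_r}(r^2-|x|^2)\bigl(\prod_i(G^{-1})'(|\partial_i\Phi|)\bigr)^{1/d}d\lambda$. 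The paper instead applies Jensen \emph{first}, to the convex $G^{-1}$ with respect to the probability weight $|x_i|\,d\lambda/\int_{B_r}|x_i|\,d\lambda$ on $B_r$, getting
$\int_{B_r}|x_i|\,|V_i'(\partial_i\Phi)|\,d\lambda \ge \bigl(\int_{B_r}|x_i|\,d\lambda\bigr)\,G^{-1}\!\bigl(\int_{B_r}|x_i|\,G(|V_i'(\partial_i\Phi)|)\,d\lambda/\int_{B_r}|x_i|\,d\lambda\bigr)$, and only then integrates by parts and applies AGM to the numerator inside the $G^{-1}$. The hypothesis $e^{-V_i}G'(V'_i)V''_i\ge 1$ together with Hadamard's inequality on $\det D^2_a\Phi$ then gives $\sum_i\int_{B_r}|x_i|\,G(\cdots)\,d\lambda\ge \tfrac{d}{d+2}r^2\lambda(B_r)$, a quantity expressed purely in terms of $r$; dividing by $\int_{B_r}|x_i|\,d\lambda\sim r^{d+1}$ puts something of order $r$ inside $G^{-1}$, and the theorem follows.

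Your version cannot be completed, because the quantity $\bigl(\prod_i(G^{-1})'(|\partial_i\Phi|)\bigr)^{1/d}$ requires lower bounds on $|\partial_i\Phi|$ that are simply unavailable. Take the model case $V_i=-\log\cos x_i$, $G=\operatorname{arcsinh}$, $G^{-1}=\sinh$. Then $A\subset(-\pi/2,\pi/2)^d$ forces $|\partial_i\Phi|<\pi/2$ everywhere, so $(G^{-1})'(|\partial_i\Phi|)=\cosh(|\partial_i\Phi|)<\cosh(\pi/2)$ is uniformly bounded, and your integral is $O(r^{d+2})$. But the target term $c_1(d)\mu(A)\,G^{-1}(c_2\mu^{1/d}(A))$ is $\sim r^{d+1}\sinh(c_2r)$, which grows exponentially and overtakes $r^{d+2}$ once $r$ is moderately large, since $\mu$ is unbounded and $r$ can be arbitrarily large for sets $A$ in the bounded cube. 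The premature pointwise AGM discards precisely the contribution of the large $V_i''(\partial_i\Phi)$ terms that the concave $G$ is designed to capture. Relatedly, the geometric estimate $\int_A|y_i|\,d\mu\gtrsim r\mu(A)/d$ that you invoke in your last step is false for the same reason: when $\mu\gg\lambda$ near $\partial\mathrm{supp}\,\mu$, a set $A$ of $\mu$-measure $\lambda(B_r)$ can have coordinate extent $O(1)$ while $r\to\infty$. So the ``principal technical obstacle'' you identify is not a technicality but a structural dead end, and you need to move the Jensen step ahead of the integration by parts, as the paper does.
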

\begin{proof}
According to the general result on Steiner symmetrization for
product measures (see \cite{Ros}), the Steiner symmetrization with respect to any axis does
not increase the surface measure of the  set. Since the family of
symmetric intervals are isoperimetric sets for every
one-dimensional measure $e^{V_i} \ dx_i$, we can assume from the
very beginning that $A$ is symmetric with respect to every mapping
$x \to (\pm x_1, \cdots, \pm x_i, \cdots, \pm x_n)$. Let $\nabla W$ be the  optimal
transportation pushing forward $\mu|_{A}$ to $\lambda|_{B_r}$. In
the same way as in the previous proposition we prove
$$
d\mu(A) +  \int_{A} \langle \nabla W,\nabla V \rangle e^V \,d\lambda
\le
 r \mu^{+}(\partial A)
$$
and
$$
\int_{A} \langle \nabla W,\nabla V \rangle e^V \,d\lambda=
\int_{B_r} \langle x,\nabla V \circ \nabla \Phi \rangle
\,d\lambda.
$$
By the symmetry reasons the functions
$x_i$ and $V'_i(\nabla \Phi) = V'_i(\Phi_{x_i}) $
have the same sign.
Hence by the Jensen inequality (which is applicable because $G^{-1}$ is convex)
\begin{align*}
\int_{B_r} \langle x,\nabla V \circ \nabla \Phi \rangle  \,d\lambda &
= \sum_{i=1}^{d}
\int_{B_r} |x_i|  |V'_i(\Phi_{x_i})| \,d\lambda
\ge
\\& \ge
\sum_{i=1}^{d}
\int_{B_r} |x_i| \ d\lambda \cdot
G^{-1} \Bigl( \frac{\int_{B_r} |x_i|  G(V'_i(|\Phi_{x_i}|)) \ d\lambda }{\int_{B_r} |x_i| \ d \lambda} \Bigr).
\end{align*}
Denote  $S = (G(V'_i)).$
One has
$$
\sum_i
\int_{B_r} |x_i|  G(V'_i(|\Phi_{x_i}|)) \,d\lambda =
{\int_{B_r}} \langle x,   S \circ \nabla \Phi \rangle \ d \lambda .
$$
The latter is larger than
\begin{align*}
\int_{B_r} \langle x,\nabla S \circ \nabla \Phi \rangle  \,d\lambda &
\ge
\frac{1}{2} \int_{B_r} \bigl( r^2 - |x|^2\bigr) \mbox{Tr} \Bigl[ D^2 S \bigl( \nabla \Phi\bigr) \cdot D^2_a \Phi  \Bigr] \,d\lambda
\\&
=
\frac{1}{2} \int_{B_r} \bigl( r^2 - |x|^2\bigr) \mbox{Tr} \Bigl[ (D^2_a \Phi)^{\frac{1}{2}} \cdot D^2 S \bigl( \nabla \Phi\bigr) \cdot (D^2_a \Phi)^{\frac{1}{2}}  \Bigr] \,d\lambda.
\end{align*}
Note that $DS$ is diagonal and nonnegative.
Applying the arithmetic-geometric inequality for the trace and determinant
we get
$$
\int_{B_r} \langle x,\nabla S \circ \nabla \Phi \rangle \,d\lambda
\ge \frac{d}{2} \int_{B_r} \bigl( r^2 - |x|^2\bigr) \bigl[
\mbox{det} (D^2_a \Phi) \cdot \det D^2 S \bigl( \nabla \Phi\bigr)
\bigr]^{1/d} \,d\lambda.
$$
The latter is equal to
$$
\frac{d}{2} \int_{B_r} \bigl( r^2 - |x|^2\bigr)  \Bigl[ e^{-V(\nabla \Phi)}\det D^2 S \bigl( \nabla \Phi\bigr) \Bigr]^{1/d}  \,d\lambda.
$$
Note that
$$
e^{-V(\nabla \Phi)}\det D^2 S \bigl( \nabla \Phi\bigr)
=
\Bigl[  \prod_{i=1}^{d} e^{-V_i} G'(V'_i) V^{''}_i\Bigr] \circ \nabla \Phi
\ge 1.
$$
Hence for some $C(d)$
$$
\int_{B_r} \langle x,\nabla S \circ \nabla \Phi \rangle  \,d\lambda
\ge C(d) r^{d+2}.
$$
Thus
$$
\int_{B_r} \langle x,\nabla V \circ \nabla \Phi \rangle  \,d\lambda
\ge C_1(d) r^{d+1} G^{-1}( C_2(d) r).
$$
The result follows from the relation $\mu(A) = \lambda(B_r)$.
\end{proof}

\begin{corollary}
The measure
$$
\mu = \prod_{i=1}^{d} \frac{d x_i}{\cos x_i}.
$$ on
 $[-\pi/2,\pi/2]^d$
 satisfies
 $$
 \mu(A)^{1-\frac{1}{d}}
 + C_1 e^{C_2 \mu^{1/d}(A)}
 \le  \kappa_d \mu^{+}(\partial A)
 $$
 (one can take $G=\ln(x+\sqrt{1+x^2})$).
\end{corollary}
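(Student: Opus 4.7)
The plan is to apply the previous Theorem with $V_i(x_i) = -\log \cos x_i$ on $[-\pi/2,\pi/2]$ and with $G(x)=\ln(x+\sqrt{1+x^2})=\sinh^{-1}(x)$. First I would check the hypotheses on each factor: the functions $V_i$ are even, satisfy $V_i(0)=0$ and are convex (in fact strictly convex on $(-\pi/2,\pi/2)$), since
\begin{equation*}
V_i'(x_i)=\tan x_i,\qquad V_i''(x_i)=\sec^2 x_i.
\end{equation*}

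Next I would verify the key inequality $e^{-V_i}\,G'(V_i')\,V_i''\ge 1$. With the proposed $G$, we have $G'(y)=1/\sqrt{1+y^2}$, which is positive and decreasing, so $G$ is increasing and concave and maps $[0,\infty)$ to $[0,\infty)$ as required. Substituting $V_i'=\tan x_i$ and using $1+\tan^2 x_i=\sec^2 x_i$,
\begin{equation*}
e^{-V_i}\,G'(V_i')\,V_i''
=\cos x_i\cdot\frac{1}{\sqrt{1+\tan^2 x_i}}\cdot\sec^2 x_i
=\cos x_i\cdot\cos x_i\cdot\sec^2 x_i=1,
\end{equation*}
so equality holds. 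Therefore the Theorem applies and yields
\begin{equation*}
\mu(A)^{1-\frac{1}{d}}+c_1(d)\,\mu(A)\,G^{-1}\bigl(c_2(d)\,\mu^{1/d}(A)\bigr)\le\kappa_d\,\mu^{+}(\partial A).
\end{equation*}

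The last step is purely analytic: inverting $G(x)=\sinh^{-1}(x)$ gives $G^{-1}(t)=\sinh(t)=\tfrac12(e^t-e^{-t})$. For $\mu(A)$ bounded away from $0$ (say $\mu(A)\ge 1$, which is the only regime in which the claimed bound is non-trivial), the factor $\mu(A)$ in front is bounded below by a constant and $\sinh(c_2\mu^{1/d}(A))\ge \tfrac14 e^{c_2\mu^{1/d}(A)}$. Combining these gives the inequality in the form
\begin{equation*}
\mu(A)^{1-\frac{1}{d}}+C_1 e^{C_2\mu^{1/d}(A)}\le\kappa_d\,\mu^{+}(\partial A),
\end{equation*}
with constants $C_1,C_2$ depending only on $d$. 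The main subtlety is simply identifying the right concave function $G$ that matches the $\cos x_i$ factor coming from $e^{-V_i}$; once one observes that $G'(y)=1/\sqrt{1+y^2}$ produces the identity $\cos x\cdot\sec x=1$, the rest is a direct application of the Theorem together with the elementary asymptotics of $\sinh$.
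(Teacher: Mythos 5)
Your proof is correct and follows exactly the route the paper intends: the paper supplies no argument for this corollary beyond the hint that one may take $G(x)=\ln(x+\sqrt{1+x^2})$, and you fill in precisely the needed details. The verification is clean: $V_i=-\log\cos x_i$ is even, convex, vanishes at $0$, and with $G'(y)=1/\sqrt{1+y^2}$ the identity $e^{-V_i}G'(V_i')V_i''=\cos x_i\cdot\cos x_i\cdot\sec^2 x_i=1$ shows the hypothesis of the preceding theorem is met (with equality), after which $G^{-1}=\sinh$ and the elementary lower bound on $\sinh$ yield the stated form.

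One remark on the final reduction: you write that the bound is ``non-trivial'' only for $\mu(A)\ge 1$; in fact the sharper observation is that the corollary \emph{as literally written} cannot hold for all sets, because as $\mu(A)\to 0$ the right-hand side $\kappa_d\mu^+(\partial A)$ tends to $0$ while $C_1e^{C_2\mu^{1/d}(A)}\to C_1>0$. The theorem's conclusion $\mu(A)^{1-1/d}+c_1\mu(A)\sinh\bigl(c_2\mu^{1/d}(A)\bigr)\le\kappa_d\mu^+(\partial A)$ is the correct statement uniformly in $\mu(A)$, and the exponential form is an equivalent restatement only on the regime $\mu(A)$ bounded away from zero, with $C_1,C_2$ depending on the chosen threshold (e.g.\ using that $\sinh(t)/e^t$ is increasing, so $\sinh(t)\ge(\sinh t_0/e^{t_0})\,e^{t}$ for $t\ge t_0$). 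Your restriction to $\mu(A)\ge 1$ is therefore the right fix, but it is worth stating it as correcting an imprecision in the corollary rather than as discarding a trivial case.
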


\end{document}